\documentclass[11pt]{article}
\usepackage{amssymb,mathrsfs,verbatim,graphicx,rotate}
\usepackage{amsmath,amsthm}
\usepackage{graphics}
\usepackage{color}
\usepackage{float}
\usepackage{lscape}
\usepackage{setspace}
\usepackage{subfigure}
\usepackage{natbib}
\usepackage{multirow}
\usepackage{enumerate}
\usepackage[mathscr]{euscript}
\usepackage{prodint}

\makeatletter
\renewcommand{\@seccntformat}[1]{%
  \ifcsname format@#1\endcsname
    \csname format@#1\endcsname
  \else
    \csname the#1\endcsname\quad % the default
  \fi
}
\g@addto@macro\appendix{%
  \def\format@section{Appendix \thesection: }%
}
\makeatother

\numberwithin{equation}{section}
\newtheorem{theorem}{Theorem}
\newtheorem{proposition}{Proposition}
\newtheorem{lemma}{Lemma}
\newtheorem{corollary}{Corollary}

\DeclareMathOperator*{\argmax}{argmax}
\DeclareMathOperator*{\argmin}{argmin}

\newcommand{\bs}[1]{\boldsymbol{#1}}
\newcommand{\s}[1]{\mathscr{#1}}
\renewcommand{\d}[1]{\mathbb{#1}}

\DeclareMathOperator*{\inprob}{\stackrel{P}{\longrightarrow}}
\DeclareMathOperator*{\inproblow}{\rightarrow_{P}}

\DeclareMathOperator*{\indist}{\stackrel{d}{\longrightarrow}}
\newcommand{\bounded}{O_P}
\newcommand{\fasterthan}{o_P}
\newcommand{\fasterthandet}{o}
\newcommand{\boundeddet}{O}

\pdfminorversion=4
\usepackage{array}
\newcolumntype{x}[1]{%
>{\centering\hspace{0pt}}p{#1}}%
\usepackage[bottom,flushmargin,hang,multiple]{footmisc}
\usepackage[margin=1in]{geometry}

\begin{document}

\title{Correcting an estimator of a multivariate monotone function\\ with isotonic regression}
\author{Ted Westling \\ Department of Mathematics and Statistics\\ University of Massachusetts Amherst \and Mark J. van der Laan \\ Department of Biostatistics\\ University of California, Berkeley \and Marco Carone\\ Department of Biostatistics\\ University of Washington }
\date{}

\maketitle

\begin{abstract}
In many problems, a sensible estimator of a possibly multivariate monotone function may itself fail to be monotone. We study the correction of such an estimator obtained via projection onto the space of functions monotone over a finite grid in the domain. We demonstrate that this corrected estimator has no worse supremal estimation error than the initial estimator, and that analogously corrected confidence bands contain the true function whenever the initial bands do, at no loss to average or maximal band width. Additionally, we demonstrate that the corrected estimator is uniformly asymptotically equivalent to the initial estimator provided that the initial estimator satisfies a stochastic equicontinuity condition and that the true function is Lipschitz and strictly monotone. We provide simple sufficient conditions for our stochastic equicontinuity condition in the important special case that the initial estimator is uniformly asymptotically linear, and illustrate the use of these results for estimation of a G-computed distribution function. Our stochastic equicontinuity condition is weaker than standard uniform stochastic equicontinuity, which has been required for alternative correction procedures. Crucially, this allows us to apply our results to the bivariate correction of the local linear estimator of a conditional distribution function known to be monotone in its conditioning argument.  Our experiments suggest that the projection step can yield significant practical improvements in performance for both the estimator and confidence band.%
\end{abstract}

%
%\begin{keywords}
%asymptotic equicontinuity, confidence band, influence function, supremum norm.
%\end{keywords}

\doublespacing

\section{Introduction}

\subsection{Background}

In many scientific problems, the parameter of interest is a component-wise monotone function. In practice, an estimator of this function may have several desirable statistical properties, yet fail to be monotone. This often occurs when the estimator is obtained through the pointwise application of a statistical procedure over the domain of the function. For instance, we may be interested in estimating a conditional cumulative distribution function $\theta_0$, defined pointwise as $\theta_0(a,y)=P_0(Y \leq y \mid A = a)$, over its domain $\mathscr{D}\subset \mathbb{R}^2$. Here, $Y$ may represent an outcome and $A$ an exposure. The map $y \mapsto \theta_0(a,y)$ is necessarily monotone for each fixed $a$. In some scientific contexts, it may be known that $a \mapsto \theta_0(a,y)$ is also monotone for each $y$, in which case $\theta_0$ is a bivariate component-wise monotone function. An estimator of $\theta_0$ can be constructed by estimating the regression function $(a,y) \mapsto E_{P_0} \left[ I(Y \leq y) \mid A = a\right]$ for each $(a,y)$ on a finite grid using kernel smoothing, and performing suitable interpolation elsewhere. For some types of kernel smoothing, including the Nadaraya-Watson estimator, the resulting estimator is necessarily monotone as a function of $y$ for each value of $a$, but not necessarily monotone as a function of $a$ for each value of $y$. For other types of kernel smoothing, including the local linear estimator, which often has smaller asymptotic bias than the Nadaraya-Watson estimator, the resulting estimator need not be monotone in either component.

Whenever the function of interest is component-wise monotone, failure of an estimator to itself be monotone can be problematic. This is most apparent if the monotonicity constraint is probabilistic in nature -- that is, the parameter mapping is monotone under all possible probability distributions. This is the case, for instance, if $\theta_0$ is a distribution function. In such settings, returning a function estimate that fails to be monotone is nonsensical, like reporting a probability estimate outside the interval $[0,1]$. However, even if the monotonicity constraint is based on scientific knowledge rather than probabilistic constraints, failure of an estimator to be monotone can be an issue. For example, if the parameter of interest represents average height or weight  among children as a function of age, scientific collaborators would likely be unsatisfied if presented with an estimated curve that were not monotone.  Finally, as we will see, there are often finite-sample performance benefits to ensuring that the monotonicity constraint is respected.

Whenever this phenomenon occurs, it is natural to seek an estimator that respects the monotonicity constraint but nevertheless remains close to the initial estimator, which may otherwise have good statistical properties. A monotone  estimator  can be naturally constructed by projecting the initial estimator onto the space of monotone functions with respect to some norm. A common choice is the $L_2$-norm, which amounts to using multivariate isotonic regression to correct the initial estimator. 

\subsection{Contribution and organization of the article}

In this article, we discuss correcting an initial estimator of a multivariate monotone function by computing the isotonic regression of the estimator over a finite grid in the domain, and  interpolating between grid points. We also consider correcting an initial confidence band by using the same procedure applied to the upper and lower limits of the band.  We provide three general results regarding this simple procedure.
\begin{enumerate}
\item Building on the results of \cite{robertson1988order} and \cite{chernozhukov2009rearrangement}, we demonstrate that the corrected estimator is at least as good as the initial estimator, meaning:
\begin{enumerate}[(a)]
\item its uniform error over the grid used in defining the projection is less than or equal to that of the initial estimator for every sample; 
\item its uniform error over the entire domain is less than or equal to that of the initial estimator asymptotically;
\item the corrected confidence band contains the true function on the projection grid whenever the initial band does, at no cost in terms of average or uniform band width.
\end{enumerate}

\item We provide high-level sufficient conditions under which the uniform difference between the initial and corrected estimators is $\fasterthan(r_n^{-1})$ for a generic sequence $r_n \longrightarrow \infty$.

\item We provide simpler lower-level sufficient conditions in two special cases:
\begin{enumerate}[(a)]
 \item when the initial estimator is uniformly asymptotically linear, in which case the appropriate rate is $r_n = n^{1/2}$;
 \item when the initial estimator is kernel-smoothed with bandwidth $h_n$, in which case the appropriate rate is $r_n = (n h_n)^{1/2}$ for univariate kernel smoothing.
 \end{enumerate}
\end{enumerate}

We apply our theoretical results to two sets of examples: nonparametric efficient estimation of a G-computed distribution function for a binary exposure, and local linear estimation of a conditional distribution function with a continuous exposure.

Other authors have considered the correction of an initial estimator using isotonic regression. To name a few, \cite{mukarjee1994} used a projection-like procedure applied to a kernel smoothing estimator of a regression function, whereas \cite{patra2016} used the projection procedure applied to a univariate cumulative distribution function in the context of a mixture model. These articles addressed the properties of the projection procedure in their specific applications. In contrast, we provide general results that are applicable broadly.

\subsection{Alternative projection procedures}

The projection approach is not the only possible correction procedure. \cite{dette2006}, \cite{chernozhukov2009rearrangement}, and \cite{chernozhukov2010quantile} studied a correction based on monotone rearrangements. However, monotone rearrangements do not generalize to the multivariate setting as naturally as projections -- for example, \cite{chernozhukov2009rearrangement} proposed averaging a variety of possible multivariate monotone rearrangements to obtain a final monotone estimator. In contrast, the $L_2$ projection of an initial estimator onto the space of monotone functions is uniquely defined, even in the context of multivariate functions.

\cite{daouia2012isotonic} proposed an alternative correction procedure that consists of taking a convex combination of upper and lower monotone envelope functions, and they demonstrated conditions under which their estimator is asymptotically equivalent in supremum norm to the initial estimator. There are several differences between our contributions and those of \cite{daouia2012isotonic}. For instance, \cite{daouia2012isotonic} did not study correction of confidence bands, which we consider in Section~\ref{confidence}, or the important special case of asymptotically linear estimators, which we consider in Section~\ref{linear}. Our results in these two sections apply equally well to our correction procedure and to the correction procedure considered by \cite{daouia2012isotonic}.

Perhaps the most important theoretical contribution of our work  beyond that of existing research is the weaker form of stochastic equicontinuity that we require for establishing asymptotic equivalence of the initial and projected estimators. In contrast,  \cite{daouia2012isotonic} explicitly required the usual uniform asymptotic equicontinuity, while application of the Hadamard differentiability results of  \cite{chernozhukov2010quantile} requires weak convergence to a tight limit, which is stronger than uniform asymptotic equicontinuity.  Our weaker condition allows us to use our general results to tackle a broader range of initial estimators, including kernel smoothed estimators, which are typically not uniformly asymptotically equicontinuous at useful rates, but nevertheless can frequently be shown to satisfy our condition. We discuss this in detail in Section~\ref{sec:kern}. We illustrate this general contribution in Section~\ref{cond_dist} by studying the bivariate correction of a conditional distribution function estimated using local linear regression, which would not be possible using the stronger asymptotic equicontinuity condition. In numerical studies, we find that the projected estimator and confidence bands can offer substantial finite-sample improvements over the initial estimator and bands in this example.

 %In our view, the projection approach is preferable to strategies based either on rearrangements or enveloping because the projection is unique in both univariate and multivariate contexts. Additionally, projections are easy to understand, rely on familiar tools, and have minimal squared-error impact on the initial estimator among all candidate monotone corrections.
%
%The paper proceeds as follows. In the next section we formally define our statistical setting and present our main result concerning uniform-norm asymptotic equivalence of the initial and projected estimators. We also discuss isotonization of confidence regions, and provide results for the important special case of asymptotically linear initial estimators. In Section~\ref{survival} we check our conditions for covariate-adjusted cumulative incidence curves and present a numerical study. 

\section{Main results}\label{monotone}
\subsection{Definitions and statistical setup}

Let $\s{M}$ be a statistical model of probability measures on a probability space $(\s{X}, \s{B})$. Let $\theta : \s{M} \to  \ell^{\infty}(\s{T})$ be a parameter of interest on $\mathscr{M}$, where $\s{T}:= [0,1]^d$ and $\ell^{\infty}(\s{T})$ is the Banach space of bounded functions from $\s{T}$ to $\d{R}$ equipped with supremum norm $\|\cdot\|_{\s{T}}$. We have specified this particular $\s{T}$ for simplicity, but the results established here apply to any bounded rectangular domain $\s{T}\subset \d{R}^d$. For each $P \in \s{M}$, denote by $\theta_P$ the evaluation of $\theta$ at $P$ and note that $\theta_P$ is a bounded real-valued function on $\s{T}$.  For any $t \in \s{T}$, denote by $\theta_P(t) \in \d{R}$ the evaluation of $\theta_P$ at $t$.

For any vector $t\in\d{R}^d$ and $1\leq j\leq d$, denote by $t_j$ the $j^{th}$ component of $t$. Define the partial order $\leq$ on $\d{R}^d$ by setting $t \leq t'$ if and only if $t_j \leq t_j'$ for each $1 \leq j \leq d$.  A function $f: \d{R}^d \to \d{R}$ is called (component-wise) monotone non-decreasing if $t \leq t'$ implies that $f(t) \leq f(t')$. Denote $\|t\| = \max_{1 \leq j \leq d} |t_j|$ for any  vector $t \in \d{R}^d$. Additionally, denote by $\bs\Theta \subset \ell^{\infty}(\s{T})$ the convex set of bounded monotone non-decreasing functions from $\s{T}$ to $\d{R}$. For concreteness, we focus on non-decreasing functions, but all results established here apply equally to non-increasing functions.%Note that $t < t'$ if and only if  $t \leq t'$ and $t \neq t'$, which holds if and only if $t_j \leq t_j'$ for each $j$ and $t_j < t_j'$ for at least one $j$.

Let $\mathscr{M}_0:=\{P\in\mathscr{M}:\theta_P\in\bs{\Theta}\}\subseteq \s{M}$ and suppose that $\mathscr{M}_0$ is nonempty. Generally, this inclusion is strict only if, rather than being implied by the rules of probability, the monotonicity constraint stems at least in part from prior scientific knowledge. Also, define $\bs\Theta_0 := \{ \theta \in \bs\Theta : \theta = \theta_P\mbox{ for some }P \in \mathscr{M}\} \subseteq \bs\Theta$. We are primarily interested in settings where $\bs\Theta_0 = \bs\Theta$, since in this case there is no additional knowledge about $\theta$ encoded by $\s{M}$, and in particular there is no danger of yielding a corrected estimator that is compatible with no $P \in \s{M}$.

Suppose that observations $X_1,X_2,\ldots,X_n$ are sampled independently from an unknown distribution $P_0\in\s{M}_0$, and that we wish to estimate $\theta_0:=\theta_{P_0}$ based on these observations. Suppose that, for each $t \in \s{T}$, we have access to an estimator $\theta_n(t)$ of $\theta_0(t)$ based on $X_1,X_2,\ldots,X_n$. We note that the assumption that the data are independent and identically distributed is not necessary for Theorems~\ref{thm:barlow} and~\ref{monotone_supnorm} below. For any suitable $f:\s{X}\rightarrow \mathbb{R}$, we define $Pf := \int f(x) \, P(dx)$ and $\mathbb{G}_nf := n^{1/2}\int f(x)(\d{P}_n-P_0)(dx)$, where $\d{P}_n$ is the empirical distribution based on $X_1, X_2,\dotsc, X_n$.

The central premise of this article is that $\theta_n(t)$ may have desirable statistical properties for each $t$ or even uniformly in $t$, but that $\theta_n$ as an element of $\ell^{\infty}(\s{T})$ may not fall in $\bs\Theta$ for any finite $n$ or even with probability tending to one. Our goal is to provide a corrected estimator $\theta_n^*$ that necessarily falls in $\bs\Theta$, and yet retains the statistical properties of $\theta_n$. A natural way to accomplish this is to define $\theta_n^*$ as the closest element of $\bs\Theta$ to $\theta_n$ in some norm on $\s{T}$. Ideally, we would prefer to take $\theta_n^*$ to minimize $\| \theta-\theta_n\|_{\s{T}}$ over $\theta \in \bs\Theta$. However, this is not tractable for two reasons. First, optimization over the entirety of $\s{T}$ is an infinite-dimensional optimization problem, and is hence frequently computationally intractable. To resolve this issue, for each $n$, we let $\s{T}_n = \{t_1,t_2, \dotsc, t_{m_n}\} \subseteq \s{T}$ be a finite rectangular lattice in $\s{T}$ over which we will perform the optimization, and define and consider $\|\cdot\|_{\s{T}_n}$ as the supremum norm over $\s{T}_n$.  While it is now computationally feasible to define $\theta_{n,\infty}^*$ as a minimizer over $\theta \in \bs\Theta$ of the finite-dimensional objective function $\| \theta - \theta_n\|_{\s{T}_n}$, this objective function is challenging due to its non-differentiability. Instead, we define
\begin{equation} \theta_n^* \in \argmin_{\theta \in \bs\Theta} \sum_{t \in \s{T}_n} \left[\theta(t) -\theta_n(t)\right]^2\ .\label{projection}\end{equation}
The squared-error objective function is smooth in its arguments. In dimension $d=1$, $\theta_n^*$ thus defined is simply the isotonic regression of $\theta_n$ on the grid $\s{T}_n$, which has a closed-form representation as the greatest convex minorant of the so-called cumulative sum diagram. Furthermore, since $\| \theta_n^* - \theta_n\|_{\s{T}_n} \geq \| \theta_{n,\infty}^* - \theta_n\|_{\s{T}_n}$, many of our results also apply to $\theta_{n,\infty}^*$.

We note that $\theta_n^*$ is only uniquely defined on $\s{T}_n$. To completely characterize $\theta_n^*$, we must monotonically interpolate function values between elements of $\s{T}_n$. We will permit any monotonic interpolation that satisfies a weak condition. By the definition of a rectangular lattice, every $t \in \s{T}$ can be assigned a hyper-rectangle whose vertices $\{s_1, s_2\dotsc, s_{2^d}\}$ are elements of $\s{T}_n$ and whose interior has empty intersection with $\s{T}_n$. If multiple such hyper-rectangles exist for $t$, such as when $t$ lies on the boundary of two or more such hyper-rectangles, one can be assigned arbitrarily. We will assume that, for $t \notin \s{T}_n$, $\theta_n^*(t) = \sum_k \lambda_{k,n}(t) \theta_n^*(s_k)$ for weights $ \lambda_{1,n}(t),\lambda_{2,n}(t),\ldots,\lambda_{2^d,n}(t)\in(0,1)$ such that $\sum_k \lambda_{k,n}(t) = 1$. In words, we assume that $\theta_n^*(t)$ is a convex combination of the values of $\theta_n^*$ on the vertices of the hyper-rectangle containing $t$. A simple interpolation approach consists of setting $\theta_n^*(t) = \theta_n^*(t')$ with $t'$ the element of $\s{T}_n$ closest  to $t$, and choosing any such element if there are multiple elements of $\s{T}_n$ equally close to $t$. This particular scheme satisfies our requirement.

Finally, for each $n$, we let $\ell_n(t) \leq u_n(t)$ denote lower and upper endpoints of a confidence band for $\theta_0(t)$. We  then define $\ell_n^*$ and $u_n^*$ as the corrected versions of $\ell_n$ and $u_n$ using the same projection and interpolation procedure defined above for obtaining $\theta_n^*$ from $\theta_n$. %For some of our results, we  will allow $\ell_n$ or $u_n$ to be any type of confidence band, and we will provide properties of $\ell_n^*$ and $u_n^*$ in terms of $\ell_n$ and $u_n$.

In dimension $d = 1$, $\theta_n^*(t)$, $\ell_n^*(t)$, and $u_n^*(t)$  can be obtained for $t \in \s{T}_n$ via the Pool Adjacent Violators Algorithm, as implemented in the \texttt{R} command \texttt{isoreg} \citep{Rlang}. In dimension $d = 2$, the corrections can be obtained using the algorithm described in \cite{bril1984bivariate}, which is implemented in the \texttt{R} command \texttt{biviso} in the package \texttt{Iso} \citep{Isopackage}. In dimensions $d \geq 3$, no tailored algorithm for computation of the isotonic regression estimate yet exists to our knowledge. However, general-purpose algorithms for minimization of quadratic criteria over convex cones have been developed an implemented in the \texttt{R} package \texttt{coneproj} and may be used in this case \citep{meyer1999cone, coneproj}.

\subsection{Properties of the projected estimator}

The projected estimator $\theta_n^*$ is the isotonic regression of $\theta_n$ over the grid $\s{T}_n$. Hence, many existing finite-sample results on isotonic regression can be used to deduce properties of $\theta_n^*$. Theorem~\ref{thm:barlow} below collects a few of these properties, building upon the results of \cite{barlow1972order} and \cite{chernozhukov2009rearrangement}. We denote $\omega_n := \sup_{t \in \s{T}} \min_{s\in\s{T}_n} \| t - s\|$ as the mesh of $\s{T}_n$ in $\s{T}$.
\begin{theorem}\label{thm:barlow}
\begin{itemize}
\item[(i)] It holds that $\|\theta_n^* - \theta_0\|_{\s{T}_n} \leq \|\theta_n - \theta_0\|_{\s{T}_n}$.
\item[(ii)] If $\omega_n = \fasterthan(1)$ and $\theta_0$ is continuous on $\s{T}$, then $\| \theta_n^* - \theta_0\|_{\s{T}} \leq \| \theta_n - \theta_0\|_{\s{T}} + \fasterthan(1)$. 
\item[(iii)] If there exists some $\alpha>0$ for which $\sup_{s,t\in\s{T}:\|t - s\| \leq \delta} |\theta_0(t) - \theta_0(s)| = \fasterthandet(\delta^\alpha)$ as $\delta \to 0$, then $\| \theta_n^* - \theta_0\|_{\s{T}} \leq \|\theta_n - \theta_0\|_{\s{T}} + \fasterthan(\omega_n^{\alpha})$.
\item[(iv)] Whenever $\theta_0(t)\in[\ell_n(t),u_n(t)]$ for all $t\in\s{T}_n$, $\theta_0(t) \in [\ell_n^*(t), u_n^*(t)]$ for all $t\in\s{T}_n$.
\item[(v)] It holds that $\sum_{t \in \s{T}_n} [ u_n^*(t) - \ell_n^*(t)] = \sum_{t \in \s{T}_n} [ u_n(t) - \ell_n(t)]$ and $\| u_n^* - \ell_n^* \|_{\s{T}_n} \leq \|u_n - \ell_n \|_{\s{T}_n}$.
\end{itemize}
\end{theorem}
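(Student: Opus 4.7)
The plan is to derive parts (i), (iv), and (v) as direct consequences of three classical finite-sample properties of the $L_2$-projection operator $P$ onto the isotonic cone over the finite grid $\s{T}_n$: (a) sup-norm reduction against isotonic targets, $\|P(x) - g\|_{\s{T}_n} \leq \|x - g\|_{\s{T}_n}$ for every isotonic $g$ (Barlow et al.\ 1972, Thm 1.6.1; see also Chernozhukov et al.\ 2009); (b) order preservation, $x \leq y$ pointwise on $\s{T}_n$ implies $P(x) \leq P(y)$ pointwise, which follows from the max--min representation of isotonic regression; and (c) constant equivariance, $P(x+c) = P(x) + c$ and $\sum_{t \in \s{T}_n} P(x)(t) = \sum_{t \in \s{T}_n} x(t)$, both due to the monotone cone containing all constants. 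Part (i) then follows by applying (a) with $g = \theta_0|_{\s{T}_n}$, which is isotonic since $\theta_0 \in \bs\Theta$. For (iv), note $P(\theta_0|_{\s{T}_n}) = \theta_0|_{\s{T}_n}$ and apply (b) to $\ell_n \leq \theta_0 \leq u_n$ on $\s{T}_n$. For (v), the sum identity follows from (c) applied separately to $\ell_n$ and $u_n$; for the sup-norm bound, set $c := \|u_n - \ell_n\|_{\s{T}_n}$ so that $u_n \leq \ell_n + c$ on $\s{T}_n$, whereupon (b) and (c) together give $u_n^* \leq P(\ell_n + c) = \ell_n^* + c$, hence $\|u_n^* - \ell_n^*\|_{\s{T}_n} \leq c$.

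For parts (ii) and (iii), I would extend (i) off the grid using the prescribed monotonic interpolation. Given $t \in \s{T}$, let $v_-, v_+ \in \s{T}_n$ denote respectively the minimum and maximum vertices, under the partial order $\leq$, of the hyper-rectangle assigned to $t$. Because $\theta_n^*$ is isotonic on $\s{T}_n$ and the interpolation weights $\lambda_{k,n}(t)$ are probability weights, $\theta_n^*(v_-) \leq \theta_n^*(t) \leq \theta_n^*(v_+)$; analogously $\theta_0(v_-) \leq \theta_0(t) \leq \theta_0(v_+)$ since $\theta_0 \in \bs\Theta$. A short case analysis on the relative positions of these four values yields
\[
|\theta_n^*(t) - \theta_0(t)| \leq \|\theta_n^* - \theta_0\|_{\s{T}_n} + [\theta_0(v_+) - \theta_0(v_-)].
\]
Taking supremum over $t$ and combining with (i) and the trivial bound $\|\theta_n - \theta_0\|_{\s{T}_n} \leq \|\theta_n - \theta_0\|_{\s{T}}$,
\[
\|\theta_n^* - \theta_0\|_{\s{T}} \leq \|\theta_n - \theta_0\|_{\s{T}} + \sup_{t \in \s{T}}[\theta_0(v_+(t)) - \theta_0(v_-(t))].
\]
Since $\|v_+(t) - v_-(t)\|$ is bounded by the cell diameter, which is of order $\omega_n$ on a rectangular lattice, uniform continuity of $\theta_0$ on the compact $\s{T}$ controls the remainder by $\fasterthan(1)$, giving (ii); and the H\"older-type hypothesis of (iii) controls it by $\fasterthan(\omega_n^\alpha)$, giving (iii).

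The only nonroutine step is the sandwich inequality in the display, where one must verify that the convex-combination interpolation truly confines $\theta_n^*(t)$ between the extreme vertex values; this follows because each $\theta_n^*(s_k)$ already lies in $[\theta_n^*(v_-), \theta_n^*(v_+)]$ by monotonicity on $\s{T}_n$, and convex combinations preserve this interval. Everything else amounts to assembling well-known isotonic-projection identities, and I anticipate no substantive difficulty.
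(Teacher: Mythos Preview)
Your proof is correct. Parts (i), (iv), and the sum identity in (v) match the paper's approach essentially verbatim. For (ii) and (iii), the paper uses the convex-combination identity $\theta_n^*(t) = \sum_k \lambda_{k,n}(t)\theta_n^*(s_k)$ directly, writing
\[
|\theta_n^*(t) - \theta_0(t)| \leq \sum_k \lambda_{k,n}(t)\,|\theta_n^*(s_k) - \theta_0(s_k)| + \sum_k \lambda_{k,n}(t)\,|\theta_0(s_k) - \theta_0(t)|
\]
and bounding each sum; your min--max sandwich is a different decomposition but equally direct and produces the same remainder, namely the modulus of continuity of $\theta_0$ at scale $2\omega_n$. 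For the sup-norm bound in (v), your argument via order preservation plus constant equivariance ($u_n \leq \ell_n + c$ on $\s{T}_n$ implies $u_n^* \leq \ell_n^* + c$) is more elementary than the paper's, which adapts Theorem~1.6.1 of Robertson et al.\ to obtain $\sum_{t} G(\theta_1^*(t) - \theta_2^*(t)) \leq \sum_t G(\theta_1(t) - \theta_2(t))$ for convex $G$, then takes $G(x) = |x|^p$ and lets $p \to \infty$. Your route avoids this limiting step at the cost of relying on the standing assumption $\ell_n \leq u_n$ (which the paper's contraction argument does not need); since this is part of the setup, nothing is lost.
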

Before presenting the proof of Theorem~\ref{thm:barlow}, we remark briefly on its implications. Part (i) says that the estimation error of $\theta_n^*$ over the grid $\s{T}_n$ is never worse than that of $\theta_n$, whereas parts (ii) and (iii) say that the estimation error of $\theta_n^*$ on all of $\s{T}$ is asymptotically no worse than the estimation error of $\theta_n$ in supremum norm. Similarly, part (iv) says that the isotonized band $[\ell_n^*, u_n^*]$ never has worse coverage than the original band over $\s{T}_n$. Finally, part (v) says that the potential increase in coverage comes at no cost to the average or supremum width of the bands over $\s{T}_n$. We note that parts (i), (iv) and (v)  hold true for each $n$. 

While comprehensive in scope, Theorem~\ref{thm:barlow} does not rule out the possibility that $\theta_n^*$ performs strictly better, even asymptotically, than $\theta_n$, or that the band $[\ell_n^*, u_n^*]$ is asymptotically strictly more conservative than $[\ell_n, u_n]$. In order to  construct confidence intervals or bands with correct asymptotic coverage, a stronger result is needed: it must be that $\|\theta_n^* - \theta_n\|_{\s{T}} = \fasterthan(r_n^{-1})$, where $r_n$ is a diverging sequence such that $r_n \|\theta_n - \theta_0\|_{\s{T}}$ converges in distribution to a non-degenerate limit distribution. Then, we would have that $r_n\|\theta_n^* - \theta_0\|_{\s{T}}$ converges in distribution to this same limit, and hence confidence bands constructed using approximations of this limit distribution would have correct coverage when centered around $\theta_n^*$, as we discuss more below.

We consider the following conditions on $\theta_0$ and the initial estimator $\theta_n$:
\begin{description}
\item[(A)] there exists a deterministic sequence $r_n$ tending to infinity such that, for all $\delta>0$,
\[\sup_{\|t -s\| < \delta/r_n} \left|r_n\left[\theta_n(t) - \theta_0(t)\right] - r_n\left[\theta_n(s) - \theta_0(s)\right] \right| \inproblow 0;\]
\item[(B)] there exists $K_1 < \infty$ such that $|\theta_0(t) - \theta_0(s)| \leq K_1\|t-s\|$ for all $t,s\in \s{T}$;
\item[(C)] there exists $K_0 > 0$ such that $K_0\|t-s\| \leq |\theta_0(t) - \theta_0(s)|$ for all $t,s\in \s{T}$.
\end{description}
Based on these conditions, we have the following result.
\begin{theorem}\label{monotone_supnorm}
If conditions (A)--(C) hold and $\omega_n = \fasterthan(r_n^{-1})$, then $\|\theta_n^* - \theta_n\|_{\s{T}} = \fasterthan(r_n^{-1})$.
\end{theorem}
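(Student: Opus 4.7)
The plan is to reduce the theorem to a block-diameter bound via the level-set structure of multivariate isotonic regression. Writing $\eta_n := r_n(\theta_n - \theta_0)$ and recalling that $\theta_n^*$ is constant on each ``block'' $B(t):=\{s\in\s{T}_n:\theta_n^*(s)=\theta_n^*(t)\}$ with common value $|B(t)|^{-1}\sum_{s\in B(t)}\theta_n(s)$, the decomposition $\theta_n = \theta_0 + r_n^{-1}\eta_n$ together with condition~(B) yields the pointwise bound
\[
|\theta_n^*(t) - \theta_n(t)| \leq \max_{s\in B(t)}|\theta_n(s) - \theta_n(t)| \leq K_1\,\mathrm{diam}(B(t)) + r_n^{-1}\,\mathrm{osc}_{B(t)}(\eta_n),
\]
uniformly in $t\in\s{T}_n$. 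The task thereby reduces to showing $\max_{t}\mathrm{diam}(B(t))=\fasterthan(r_n^{-1})$ in probability.

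For the diameter bound I would use the block property derived from the first-order optimality conditions for the projection~\eqref{projection}: for every subset $A\subseteq B(t)$ that is downward closed in $B(t)$ (i.e., $s\in A$, $s'\in B(t)$, $s'\leq s$ imply $s'\in A$), one has $|A|^{-1}\sum_{s\in A}\theta_n(s)\geq\theta_n^*(t)$; the analogous inequality with reversed direction holds for upward-closed subsets of $B(t)$. Given two points $s_1,s_2\in B(t)$ with $\theta_0(s_1)<\theta_0(s_2)$, the level sets $A = \{s\in B(t):\theta_0(s)\leq \theta_0(s_1)\}$ and $A' = \{s\in B(t):\theta_0(s)\geq \theta_0(s_2)\}$ are downward and upward closed in $B(t)$, respectively, by monotonicity of $\theta_0$. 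Combining the resulting inequality between their averages of $\theta_n$ with condition~(C) yields $K_0\|s_1-s_2\| \leq r_n^{-1}\mathrm{osc}_{B(t)}(\eta_n)$, and maximizing over $s_1,s_2\in B(t)$ delivers the self-consistent inequality
\[
K_0\,\mathrm{diam}(B(t)) \leq r_n^{-1}\sup_{\|s-s'\|\leq \mathrm{diam}(B(t))}|\eta_n(s)-\eta_n(s')|.
\]

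I would then invoke condition~(A) to close the loop: once the a priori bound $\max_t\mathrm{diam}(B(t)) \leq \delta/r_n$ is established with probability tending to one for some fixed $\delta$, the right-hand side of the display is $\fasterthan(1)\cdot r_n^{-1}$ by condition~(A) applied at scale $\delta/r_n$, forcing $\max_t\mathrm{diam}(B(t))=\fasterthan(r_n^{-1})$ and hence, through the pointwise bound, $\|\theta_n^* - \theta_n\|_{\s{T}_n}=\fasterthan(r_n^{-1})$. The extension from $\s{T}_n$ to all of $\s{T}$ follows by the convex-combination interpolation property imposed on $\theta_n^*$, the triangle inequality, and conditions~(A) and~(B) applied at scale $\omega_n = \fasterthan(r_n^{-1})$ between each $t$ and the surrounding grid vertices.

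I expect the main obstacle to be supplying the preliminary $\bounded(r_n^{-1})$ bound on the block diameters needed to activate condition~(A). Condition~(A) only controls the modulus of continuity of $\eta_n$ at fixed scales $\delta/r_n$, whereas the self-consistent inequality a priori only constrains $\mathrm{diam}(B(t))$ in terms of $\mathrm{osc}_{B(t)}(\eta_n)$; bootstrapping from a crude global oscillation bound on $\eta_n$ up to the $\fasterthan(1)$-refinement provided by condition~(A), uniformly over all blocks simultaneously, is the delicate step and is precisely where the gap between condition~(A) and the stronger uniform stochastic equicontinuity used by other authors is felt.
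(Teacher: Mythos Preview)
Your route via the block structure of isotonic regression is genuinely different from the paper's. The paper never touches block diameters; instead it works with the \emph{monotonicity-violation radius}
\[
\kappa_n := \sup\{\|t-s\| : s,t\in\s{T},\ s\leq t,\ \theta_n(t)\leq\theta_n(s)\},
\]
and uses the min--max representation of isotonic regression (Theorem~1.4.4 of Robertson, Wright and Dykstra) to bound $|\theta_n^*(t)-\theta_n(t)|$ by the oscillation of $\theta_n$ over $\|\cdot\|$-balls of radius $\kappa_n$. Your averaging/level-set argument is a legitimate alternative route to the same pointwise bound, with $\mathrm{diam}(B(t))$ playing the role of $\kappa_n$.

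The gap you flag is real, and it is exactly where the argument either succeeds or fails. Your ``bootstrapping from a crude global oscillation bound on $\eta_n$'' will not work: conditions (A)--(C) give no control of $\|\eta_n\|_\s{T}$. What closes the loop is a subdivision of the modulus. Since $\s{T}$ is convex, any two points at distance $\leq D$ can be joined by at most $\lceil Dr_n/\varepsilon\rceil$ steps of length $\leq \varepsilon/r_n$, so
\[
\sup_{\|s-s'\|\leq D}|\eta_n(s)-\eta_n(s')|\ \leq\ \big\lceil Dr_n/\varepsilon\big\rceil\,\sup_{\|s-s'\|\leq \varepsilon/r_n}|\eta_n(s)-\eta_n(s')|.
\]
Plugging this into your self-consistent inequality $K_0D_n\leq r_n^{-1}\omega(\eta_n;D_n)$ and working on the event $\{\omega(\eta_n;\varepsilon/r_n)<K_0\varepsilon/2\}$, which has probability tending to one by (A), yields $K_0D_n< (D_nr_n/\varepsilon+1)\,r_n^{-1}K_0\varepsilon/2$, i.e.\ $D_n<\varepsilon/r_n$. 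Since $\varepsilon>0$ is arbitrary, $D_n=\fasterthan(r_n^{-1})$ and your programme finishes. The paper deploys essentially the same subdivision idea, but applied directly to $\kappa_n$: if a violation occurs over distance exceeding $\varepsilon/r_n$, telescoping along the segment produces a violation over some subinterval of length in $[\varepsilon/(2r_n),\varepsilon/r_n]$, where (A) bites. So the decisive trick is the same; only the object it is applied to differs.

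One caveat: your diameter bound invokes condition~(C) for arbitrary pairs $s_1,s_2\in B(t)$, not merely comparable ones. This matches the paper's literal statement of (C), but note that the paper's own proof only ever applies (C) to comparable pairs (via $\kappa_n$). If (C) is read as holding only along the partial order, your block-diameter argument would need an extra step in dimension $d\geq 2$ to reduce to comparable witnesses, whereas the paper's $\kappa_n$-based approach needs no such reduction.
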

This result indicates that the projected estimator is uniformly asymptotically equivalent to the original estimator in supremum norm at the rate $r_n$.

Condition (A) is related to, but notably weaker than, uniform stochastic equicontinuity \citep[p.\ 37]{van1996weak}. (A) follows if, in particular, the process $\{r_n [\theta_n(t) - \theta_0(t)] : t\in \s{T}\}$ converges weakly to a tight limit in the space $\ell^{\infty}(\s{T})$. However, the latter condition is sufficient but not necessary for (A) to hold. This is important for application of our results to kernel smoothing estimators, which typically do not converge weakly to a tight limit, but for which condition (A) nevertheless often holds. We discuss this at length in Section~\ref{cond_dist}. The results of \cite{daouia2012isotonic} (see in particular condition (C3) therein) and \cite{chernozhukov2010quantile} rely on uniform stochastic equicontinuity in demonstrating asymptotic equivalence of their correction procedures, which essentially limits the applicability of their procedures to estimators that converge weakly to a tight limit in $\ell^{\infty}(\s{T})$.

Condition (B) constrains $\theta_0$ to be Lipschitz. Condition (C) constrains the variation of $\theta_0$ from below,  and is slightly more restrictive than a requirement for strict monotonicity. If, for instance, $\theta_0$ is differentiable, then (C) is satisfied if all first-order partial derivatives of $\theta_0$ are bounded away from zero. Condition (C) excludes, for instance, situations in which $\theta_0$ is differentiable with null derivative over an interval. In such cases, $\theta_n^*$ may have strictly smaller variance on these intervals than $\theta_n$ because $\theta_n^*$ will pool estimates across the flat region while $\theta_n$ may not. Hence, in such cases, $\theta_n^*$ may potentially asymptotically improve on $\theta_n$, so that $\theta_n^*$ and $\theta_n$ are not asymptotically equivalent at the rate $r_n$. Theoretical results in these cases would be of interest, but are beyond the scope of this article. In addition to conditions (A)--(C), Theorem~\ref{monotone_supnorm} requires that the mesh $\omega_n$ of $\s{T}_n$ tend to zero in probability faster than $r_n^{-1}$. Since $\s{T}_n$ is chosen by the user, this is not a problem in practice.
%As we discuss more in the next section, it justifies using the same method to construct asymptotic confidence intervals and bands for $\hat\Psi_n$ as for $\Psi_n$, 

We prove Theorem~\ref{monotone_supnorm} via three lemmas, which may be of interest in their own right. The first lemma controls the size of deviations in $\theta_n$ over small neighborhoods, and does not hinge on condition (C) holding.
\begin{lemma}
If (A)--(B) hold and $b_n = \fasterthan(r_n^{-1})$, then $\displaystyle \sup_{\|t-s\| \leq b_n}   \left|  \theta_n(t) - \theta_n(s)\right| = \fasterthan(r_n^{-1})$.
\label{lemma:moduli_of_continuity}
\end{lemma}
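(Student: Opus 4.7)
The plan is to decompose the difference $\theta_n(t) - \theta_n(s)$ into a ``centered'' part and a deterministic part by adding and subtracting $\theta_0$:
\begin{equation*}
|\theta_n(t) - \theta_n(s)| \;\leq\; \bigl|[\theta_n(t) - \theta_0(t)] - [\theta_n(s) - \theta_0(s)]\bigr| + |\theta_0(t) - \theta_0(s)|.
\end{equation*}
The second term is easy: by condition (B), on the event $\{\|t-s\| \leq b_n\}$ it is bounded by $K_1 b_n$, and since $b_n = o_P(r_n^{-1})$ by assumption, this contribution is $o_P(r_n^{-1})$ uniformly in the pair $(t,s)$.

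The first term is where condition (A) enters, and the main (mild) subtlety of the proof is that $b_n$ is itself random, while condition (A) is phrased in terms of a deterministic neighborhood of radius $\delta/r_n$. I would handle this with the standard ``pick $\delta$ small, then $n$ large'' argument: fix arbitrary $\varepsilon,\eta>0$; by (A), for each fixed $\delta>0$ there exists $N_\delta$ such that for all $n\geq N_\delta$,
\begin{equation*}
P\!\left(\sup_{\|t-s\| < \delta/r_n} \bigl|r_n[\theta_n(t)-\theta_0(t)] - r_n[\theta_n(s)-\theta_0(s)]\bigr| > \varepsilon\right) < \eta/2,
\end{equation*}
and since $r_n b_n = o_P(1)$, we also have $P(r_n b_n \geq \delta) < \eta/2$ for all sufficiently large $n$. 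On the intersection of the complements of these two events, every pair $(t,s)$ with $\|t-s\|\leq b_n$ automatically satisfies $\|t-s\| < \delta/r_n$, so the centered supremum is at most $\varepsilon/r_n$. Thus
\begin{equation*}
P\!\left(r_n \sup_{\|t-s\| \leq b_n} \bigl|[\theta_n(t)-\theta_0(t)] - [\theta_n(s)-\theta_0(s)]\bigr| > \varepsilon\right) < \eta
\end{equation*}
for all $n$ large, which gives the $o_P(r_n^{-1})$ bound for the centered piece.

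Combining the two bounds via the triangle inequality yields $\sup_{\|t-s\|\leq b_n} |\theta_n(t)-\theta_n(s)| = o_P(r_n^{-1})$. The only real obstacle is the book-keeping to pass from the deterministic-radius statement in (A) to the random-radius statement with $b_n$; note that this argument does not need condition (C), in agreement with the lemma's hypotheses.
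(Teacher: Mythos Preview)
Your proposal is correct and follows essentially the same approach as the paper: both use the triangle inequality to split $|\theta_n(t)-\theta_n(s)|$ into a centered piece handled by (A) and a deterministic piece handled by (B). If anything, your argument is more careful than the paper's two-line proof, since you explicitly spell out the passage from the deterministic radius $\delta/r_n$ in (A) to the random radius $b_n$, a detail the paper leaves implicit.
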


The second lemma controls the size of neighborhoods over which violations in monotonicity can occur. Henceforth, we define $\kappa_n := \sup\left\{\|t-s\| : s, t \in \s{T}, s \leq t, \theta_n(t) \leq \theta_n(s)\right\}.$ In this lemma we again require condition (A), but now require (C) rather than (B).
\begin{lemma}\label{neighborhoods}
If conditions (A) and (C) hold, then $\kappa_n = \fasterthan(r_n^{-1})$.
\end{lemma}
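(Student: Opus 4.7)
My strategy is to show that any monotonicity violation of $\theta_n$ at a scale larger than $\epsilon/r_n$ can be dyadically localized to a violation at a scale lying in $(\epsilon/(2 r_n),\epsilon/r_n]$, which condition (A) then prohibits with high probability thanks to the lower bound on the variation of $\theta_0$ supplied by (C). Fix $\epsilon>0$ and define the oscillation
\begin{equation*}
\omega_n(\delta) \ :=\ \sup_{\|u - v\| < \delta/r_n}\bigl|r_n(\theta_n-\theta_0)(u) - r_n(\theta_n-\theta_0)(v)\bigr|,
\end{equation*}
which satisfies $\omega_n(\delta)=\fasterthan(1)$ for every $\delta>0$ by (A). I will establish the inclusion $\{\kappa_n > \epsilon/r_n\} \subseteq \{\omega_n(2\epsilon) \geq K_0\epsilon/2\}$; since the right-hand event has probability tending to zero, this yields $P(\kappa_n > \epsilon/r_n)\to 0$ for every $\epsilon>0$, and hence the desired $\kappa_n = \fasterthan(r_n^{-1})$.

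\textbf{Halving step.} Suppose $\kappa_n > \epsilon/r_n$, so there exist $s\leq t$ in $\s{T}$ with $\theta_n(s)\geq \theta_n(t)$ and $\|t-s\| > \epsilon/r_n$. Set $m := (s+t)/2$, so that $s\leq m\leq t$ and $\|m-s\| = \|t-m\| = \|t-s\|/2$. If $\theta_n(m) \leq \theta_n(s)$, I replace $(s,t)$ with $(s,m)$; otherwise $\theta_n(m) > \theta_n(s) \geq \theta_n(t)$, and I take $(m,t)$ instead. In either case the new pair is component-wise ordered, exhibits a monotonicity violation, and has half the distance of the previous one. Iterating this halving $k := \lceil \log_2(r_n\|t-s\|/\epsilon)\rceil$ times produces a pair $(s^*,t^*)$ with $s^*\leq t^*$, $\theta_n(s^*)\geq \theta_n(t^*)$, and $\|t^* - s^*\|\in(\epsilon/(2 r_n),\,\epsilon/r_n]$.

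\textbf{Combining (C) with (A).} For this pair, the monotonicity of $\theta_0$ together with condition (C) yields $\theta_0(t^*) - \theta_0(s^*) \geq K_0\|t^*-s^*\|$, so that
\begin{equation*}
(\theta_n-\theta_0)(s^*) - (\theta_n-\theta_0)(t^*) \ =\ [\theta_n(s^*)-\theta_n(t^*)] + [\theta_0(t^*)-\theta_0(s^*)] \ \geq\ K_0\|t^*-s^*\| \ >\ \frac{K_0\epsilon}{2 r_n}.
\end{equation*}
Multiplying through by $r_n$ and noting $\|t^*-s^*\|\leq \epsilon/r_n < 2\epsilon/r_n$, I conclude $\omega_n(2\epsilon) \geq K_0\epsilon/2$, which proves the claimed inclusion.

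\textbf{Expected main obstacle.} The chief hurdle is that condition (A) only constrains the rescaled process $r_n(\theta_n-\theta_0)$ over neighborhoods of size $\delta/r_n$, whereas $\kappa_n$ is a global quantity whose witnessing pair may be separated by any distance up to the diameter of $\s{T}$; a naive telescoping chain of $O(r_n)$ such increments would each be $\fasterthan(1)$ but accumulate into something uncontrolled. Reconciling these scales is exactly what the dyadic halving accomplishes: it trades the ambiguity in the witnessing distance for a controlled dyadic window just below $\epsilon/r_n$, precisely where (A) and (C) can be combined directly.
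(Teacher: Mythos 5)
Your proof is correct and follows essentially the same strategy as the paper's: localize a macroscopic monotonicity violation to a violation at scale roughly $\epsilon/r_n$, then use (C) to show the centered process $r_n(\theta_n-\theta_0)$ must have a large increment over that small window, contradicting (A). The only difference is in the localization step, where you use dyadic halving while the paper partitions the segment from $s$ to $t$ into roughly $\|t-s\|\,r_n/\epsilon$ pieces of length in $[\epsilon/(2r_n),\epsilon/r_n]$ and applies a pigeonhole/telescoping argument to find a piece with a violation; these are minor variants of the same idea, and both lead to the same conclusion via the same application of (C) and (A).
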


Our final lemma bounds the maximal absolute deviation between $\theta_n^*$ and $\theta_n$ over the grid $\s{T}_n$ in terms of the supremal deviations of $\theta_n$ over neighborhoods smaller than $\kappa_n$. This lemma does not depend on any of the conditions (A)--(C).
\begin{lemma}
The inequality $\max_{t \in \s{T}_n} |\theta_n^*(t) - \theta_n(t)| \leq \sup_{\|s- t\| \leq \kappa_n} |\theta_n(s) - \theta_n(t)|$ holds.
% there is a $J_{n}(t) \subseteq \s{T}_n$  such that $\hat\Psi_n(t) = \frac{1}{| J_{n}(t)|} \sum_{s \in J_{n}(t)}  \Psi_n(s)$ and $\max_{t \in \s{T}_n}\max_{s \in J_{n}(t)} |\Psi_n(t) - \Psi_n(s)| \leq \sup_{\| t- s\| \leq \kappa_n} |\Psi_n(t) - \Psi_n(s)|$.
\label{pava}
\end{lemma}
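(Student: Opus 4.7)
The plan is to invoke the max--min representation of the $L_2$ isotonic regression on a finite poset (see, e.g., \cite{robertson1988order}), which gives, for each $t \in \s{T}_n$,
\[\theta_n^*(t) = \max_{U} \min_{L} \frac{1}{|U \cap L|}\sum_{y \in U \cap L} \theta_n(y) = \min_{L} \max_{U} \frac{1}{|U \cap L|}\sum_{y \in U \cap L} \theta_n(y),\]
where $U$ ranges over upper sets of $\s{T}_n$ containing $t$ and $L$ over lower sets of $\s{T}_n$ containing $t$. Setting $\omega_t := \sup\{|\theta_n(s) - \theta_n(t)| : s \in \s{T}, \|s-t\| \leq \kappa_n\}$, both of the bounds $\theta_n^*(t) - \theta_n(t) \leq \omega_t$ and $\theta_n^*(t) - \theta_n(t) \geq -\omega_t$ will follow from choosing principal $L$ or $U$ and applying the definition of $\kappa_n$.

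For the upper bound I would substitute the principal lower set $L_t := \{y \in \s{T}_n : y \leq t\}$ into the min--max form, so that $\theta_n^*(t) \leq \max_U \frac{1}{|U \cap L_t|} \sum_{y \in U \cap L_t} \theta_n(y)$. Every $y \in U \cap L_t$ satisfies $y \leq t$: if $\|y-t\| \leq \kappa_n$ then $\theta_n(y) \leq \theta_n(t) + \omega_t$ by definition of $\omega_t$, while if $\|y-t\| > \kappa_n$ the definition of $\kappa_n$ prevents $(y,t)$ from being a violation and forces $\theta_n(y) < \theta_n(t)$. In either case $\theta_n(y) \leq \theta_n(t) + \omega_t$, so averaging over $U \cap L_t$ and then maximizing over $U$ preserves the bound.

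The lower bound is obtained symmetrically from the max--min form by taking $U = U_t := \{y \in \s{T}_n : y \geq t\}$: every $y$ in $U_t \cap L$ satisfies $y \geq t$, and an analogous case split gives $\theta_n(y) \geq \theta_n(t) - \omega_t$. Combining the two bounds yields $|\theta_n^*(t) - \theta_n(t)| \leq \omega_t$ for each $t \in \s{T}_n$, and the lemma follows upon taking the maximum over $t$ and noting that $\sup_{t \in \s{T}_n} \omega_t$ is bounded above by the right-hand side of the stated inequality.

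The main step requiring care is pinning down the correct form of the max--min representation, which is standard but sensitive to conventions for ``upper'' versus ``lower'' sets; once that is in hand, the rest of the argument is a direct case split driven entirely by the definition of $\kappa_n$ and makes no use of the analytic conditions (A)--(C).
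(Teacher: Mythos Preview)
Your proof is correct and follows essentially the same strategy as the paper: both invoke the max--min representation of the isotonic regression (Theorem~1.4.4 of \cite{robertson1988order}), specialize to the principal upper and lower sets $U_t$ and $L_t$, and then use the definition of $\kappa_n$ to control the remaining optimization. Your term-by-term bound on the average is in fact slightly more direct than the paper's version, which instead argues that the optimizing lower (respectively upper) set can be taken not to contain any points farther than $\kappa_n$ from $t$; both routes yield the same inequality.
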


The proof of Theorem~\ref{monotone_supnorm} follows easily from Lemmas~\ref{lemma:moduli_of_continuity},~\ref{neighborhoods}, and~\ref{pava}. The proof of these Lemmas dn Theorem~\ref{monotone_supnorm} are presented in Appendix~B. 

\subsection{Construction of confidence bands}\label{confidence}

Suppose there exists a fixed function $\gamma_{\alpha} : \s{T} \to \d{R}$ such that $\ell_n$ and $u_n$ satisfy:
\begin{description}
\item[(a)] $\| r_n (\theta_n - \ell_n) - \gamma_\alpha \|_{\s{T}} \inproblow 0$,
\item[(b)] $\| r_n (u_n - \theta_n) - \gamma_\alpha \|_{\s{T}} \inproblow 0$,
\item[(c)] $P_0\left( r_n | \theta_n(t) - \theta_0(t)| \geq \gamma_\alpha(t)\mbox{ for all }t \in \s{T}\right)\longrightarrow 1 - \alpha$.
\end{description} As an example of a confidence band that satisfies conditions (a)--(c), suppose that $\sigma_0:\s{T}\rightarrow (0,+\infty)$ is a scaling function and $c_{\alpha}$ is a fixed constant such that, as $n$ tends to infinity, 
\[P_0\left(r_n \left\| \frac{\theta_n  - \theta_0}{\sigma_0} \right\|_{\s{T}} \geq c_{\alpha}\right)\longrightarrow 1 - \alpha\ .\] 
If $\sigma_n$ is an estimator of $\sigma_0$ satisfying $\| \sigma_n -\sigma_0\|_{\s{T}} \inproblow 0$ and $c_{\alpha,n}$ is an estimator of $c_{\alpha}$ such that $c_{\alpha,n} \inproblow c_{\alpha}$, then the Wald-type band defined by lower and upper endpoints $\ell_n(t) := \theta_n(t) - c_{\alpha,n}r^{-1}_n\sigma_n(t)$ and $u_n(t) := \theta_n(t) + c_{\alpha}r^{-1}_n\sigma_n(t)$ satisfies (a)--(c) with $\gamma_\alpha = c_{\alpha} \sigma_0$. However, the latter conditions can also be satisfied by other types of bands, such as those constructed with a consistent bootstrap procedure.

Under conditions (a)--(c), the confidence band $[\ell_n,u_n]$ has asymptotic coverage $1-\alpha$. When conditions (A) and (B) also hold, the corrected band $[\ell_n^*, u_n^*]$ has the same asymptotic coverage as the original band $[\ell_n, u_n]$, as stated in the following result.
\begin{corollary}\label{cor:band}
If conditions (A)--(B) and (a)--(c) hold, $\gamma_\alpha$ is uniformly continuous on $\s{T}$, and $\omega_n = \fasterthan(r_n^{-1})$, then the confidence band $[\ell_n^*, u_n^*]$ has asymptotic coverage $1-\alpha$.
\end{corollary}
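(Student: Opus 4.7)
My plan is to transfer the asymptotic $1-\alpha$ coverage of the original band $[\ell_n, u_n]$ on $\s{T}$, which follows from conditions (a)--(c), to the corrected band $[\ell_n^*, u_n^*]$. The central tool is the translation equivariance of isotonic regression: for any constant $c$, the projection of $\ell_n + c$ equals $\ell_n^* + c$, so the projection of the shrunken band $[\ell_n + K_1\omega_n, u_n - K_1\omega_n]$ is exactly $[\ell_n^* + K_1\omega_n, u_n^* - K_1\omega_n]$. (Throughout I absorb any dimensional constant bounding $\|s-t\|$ for $s,t$ in the same hyperrectangle into $K_1$, so that $|\theta_0(s_+)-\theta_0(t)|\leq K_1\omega_n$ below.)

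First, I would argue that the shrunken original band still has asymptotic coverage $1-\alpha$ on $\s{T}$. By (a)--(b), the shrunken coverage event differs from the event in (c) by a uniform $\fasterthan(r_n^{-1})$ perturbation, since $K_1\omega_n = \fasterthan(r_n^{-1})$. Continuity of the limit distribution of $\sup_t r_n |\theta_n(t) - \theta_0(t)|/\gamma_\alpha(t)$ at its $(1-\alpha)$-quantile -- implicit in standard realizations of (c) such as Wald-type and consistent bootstrap bands -- then yields the same limit coverage.

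Next, Theorem~\ref{thm:barlow}(iv) applied to the shrunken band gives
\[
P_0\bigl(\ell_n^*(s) + K_1\omega_n \leq \theta_0(s) \leq u_n^*(s) - K_1\omega_n\text{ for all }s \in \s{T}_n\bigr) \to 1-\alpha.
\]
To extend this to $\s{T}$, fix $t \in \s{T}\setminus\s{T}_n$ inside a hyperrectangle with vertices $\{s_1,\ldots,s_{2^d}\}\subset \s{T}_n$ and let $s_+ := \max_k s_k$ componentwise. Since the interpolation weights $\lambda_{k,n}(t)$ form a probability vector and $\ell_n^*$ is monotone, $\ell_n^*(t) = \sum_k \lambda_{k,n}(t)\,\ell_n^*(s_k) \leq \ell_n^*(s_+)$; chaining with the display and condition (B),
\[
\ell_n^*(t) \leq \ell_n^*(s_+) \leq \theta_0(s_+) - K_1\omega_n \leq \theta_0(t) + K_1\omega_n - K_1\omega_n = \theta_0(t),
\]
and the symmetric argument with $s_- := \min_k s_k$ gives $u_n^*(t) \geq \theta_0(t)$. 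Hence the grid event implies the event $\{\ell_n^* \leq \theta_0 \leq u_n^* \text{ on } \s{T}\}$, which therefore has probability at least $1-\alpha + \fasterthandet(1)$.

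The main obstacle is the continuity claim in the first step: that the asymptotic coverage is unaffected by shifting the half-width in (c) by $\fasterthan(r_n^{-1})$. This is a mild no-atom requirement on the limit law at the critical level, which is automatic in the standard Wald-type and bootstrap constructions but not stated explicitly as part of (a)--(c) and therefore merits care. The matching upper bound, giving exact asymptotic coverage $\to 1-\alpha$ rather than only $\geq 1-\alpha$, follows from running the same argument on the expanded band $[\ell_n - K_1\omega_n, u_n + K_1\omega_n]$, whose projection $[\ell_n^* - K_1\omega_n, u_n^* + K_1\omega_n]$ contains $[\ell_n^*, u_n^*]$ pointwise on $\s{T}$.
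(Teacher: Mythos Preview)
Your approach is genuinely different from the paper's. The paper verifies that $\ell_n$ and $u_n$ themselves satisfy condition (A) (combining condition (A) for $\theta_n$, conditions (a)--(b), and uniform continuity of $\gamma_\alpha$), then applies Theorem~\ref{monotone_supnorm} to $\ell_n$ and $u_n$ in place of $\theta_n$ to obtain $\|\ell_n^*-\ell_n\|_{\s{T}}=\fasterthan(r_n^{-1})$ and $\|u_n^*-u_n\|_{\s{T}}=\fasterthan(r_n^{-1})$, and concludes that the two bands are asymptotically equivalent. Your route via translation equivariance, Theorem~\ref{thm:barlow}(iv), and the Lipschitz margin is more elementary and gives the lower bound $\liminf_n P_0(\ell_n^*\leq\theta_0\leq u_n^*\text{ on }\s{T})\geq 1-\alpha$ cleanly; that part is correct and even avoids condition (A) for the band limits. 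You are also right to flag the implicit continuity requirement at the critical level, which the paper's proof uses just as tacitly.

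The gap is your upper-bound sketch. ``Running the same argument on the expanded band'' yields only another lower bound: it shows $P_0(\ell_n^*-K_1\omega_n\leq\theta_0\leq u_n^*+K_1\omega_n\text{ on }\s{T})\geq 1-\alpha+\fasterthandet(1)$, and the containment $[\ell_n^*,u_n^*]\subseteq[\ell_n^*-K_1\omega_n,u_n^*+K_1\omega_n]$ points the wrong way for bounding the smaller band's coverage from above. The implication in Theorem~\ref{thm:barlow}(iv) is one-directional---coverage by the original band implies coverage by the projected band, not conversely---so from $\ell_n^*\leq\theta_0\leq u_n^*$ on $\s{T}_n$ you cannot deduce anything about $\ell_n\leq\theta_0\leq u_n$ or any shift thereof. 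To get the matching upper bound (hence exact limit $1-\alpha$) you need $\|\ell_n^*-\ell_n\|_{\s{T}},\|u_n^*-u_n\|_{\s{T}}=\fasterthan(r_n^{-1})$, which is precisely what the paper obtains from Theorem~\ref{monotone_supnorm}; note that this step uses condition~(C) in addition to (A)--(B). Without strict monotonicity of $\theta_0$, the projected band can in principle asymptotically over-cover (isotonic projection averages out noise over flat stretches of $\theta_0$), so an argument of your type cannot close the gap.
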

The proof of Corollary~\ref{cor:band} is presented in Appendix~\ref{app:cor1}. We also note that Theorem~\ref{monotone_supnorm} immediately implies that Wald-type confidence bands constructed around $\theta_n$ have the same asymptotic coverage if they are constructed around $\theta_n^*$ instead.

\section{Refined results under additional structure}

In this section, we provide more detailed conditions that imply condition (A) in two special cases: when $\theta_n$ is asymptotically linear, and when $\theta_n$ is a kernel smoothing-type estimator.

\subsection{Special case I: asymptotically linear estimators}\label{linear}

Suppose that the initial estimator $\theta_n$ is uniformly asymptotically linear: for each $t\in \s{T}$, there exists $\phi_{0,t}:\s{X}\mapsto \mathbb{R}$ depending on $P_0$ such that $\int \phi_{0,t}(x)dP_0(x)=0$, $\int \phi^2_{0,t}(x)dP_0(x)<\infty$, and 
\begin{equation}\label{asy_linear}
\theta_n(t) = \theta_0(t)+ \frac{1}{n}\sum_{i=1}^{n} \phi_{0,t}(X_i) + R_{n, t}
\end{equation}
for a remainder term $R_{n,t}$ with $n^{1/2}\sup_{t\in \s{T}} |R_{n,t}| = \fasterthan(1)$. The function $\phi_{0,t}$ is the influence function of $\theta_n(t)$ under sampling from $P_0$. It is desirable for $\theta_n$ to have representation \eqref{asy_linear} because this immediately implies its uniform weak consistency  as well as the pointwise asymptotic normality of $n^{1/2}\left[\theta_n(t) - \theta_0(t)\right]$ for each $t \in \s{T}$.  If in addition the collection $\{\phi_{0,t} : t \in \s{T}\}$ of influence functions forms a $P_0$-Donsker class, $\{n^{1/2}\left[\theta_n(t) - \theta_0(t)\right] : t \in \s{T}\}$ converges weakly in $\ell^{\infty}(\s{T})$ to a Gaussian process with covariance function $\Sigma_0:(t,s)\mapsto\int \phi_{0,t}(x)\phi_{0,s}(x)dP_0(x)$. Uniform asymptotic confidence bands based on $\theta_n$ can then be formed by using appropriate quantiles from any suitable approximation of the distribution of the supremum of the limiting Gaussian process.

We introduce two additional conditions:
\begin{description}
\item[(A1)] the collection $\{\phi_{0,t} : t \in \s{T}\}$ of influence curves is a $P_0$-Donsker class;
\item[(A2)] $\Sigma_0$ is uniformly continuous in the sense that $\limsup_{\|t - s\| \to 0} |\Sigma_0(s, t) - \Sigma_0(t,t)| = 0.$
\end{description} Whenever $\theta_n$ is uniformly asymptotically linear, Theorem~\ref{monotone_supnorm} can be shown to hold under (A1), (A2) and (B), as implied by the theorem below. The validity of (A1) and (A2) can be assessed by scrutinizing the influence function $\phi_{0,t}$ of $\theta_n(t)$ for each $t \in \s{T}$. This fact renders the verification of these conditions very simple once uniform asymptotic linearity has been established.
\begin{theorem} For any estimator $\theta_n$ satisfying \eqref{asy_linear}, (A1) and (A2) together imply (A).
\label{thm:emp_process}
\end{theorem}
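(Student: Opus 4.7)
The plan is to take $r_n = n^{1/2}$ and to decompose the centered, scaled difference using the asymptotic linearity expansion \eqref{asy_linear}. Since $P_0\phi_{0,t} = 0$ for each $t \in \s{T}$, one may write
\[n^{1/2}\left[\theta_n(t) - \theta_0(t)\right] - n^{1/2}\left[\theta_n(s) - \theta_0(s)\right] = \d{G}_n(\phi_{0,t} - \phi_{0,s}) + n^{1/2}(R_{n,t} - R_{n,s}).\]
By hypothesis, $\sup_{s,t \in \s{T}} n^{1/2}|R_{n,t} - R_{n,s}| \leq 2n^{1/2}\sup_{t \in \s{T}}|R_{n,t}| = \fasterthan(1)$, so the remainder is uniformly negligible. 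It therefore suffices to show that, for each $\delta > 0$, the empirical-process increment $\d{G}_n(\phi_{0,t} - \phi_{0,s})$ tends to zero in probability uniformly over pairs with $\|t-s\| < \delta/n^{1/2}$.

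Next, I would invoke the asymptotic equicontinuity characterization of Donsker classes \citep[Theorem 1.5.7]{van1996weak}: under (A1), for any deterministic sequence $\eta_n \downarrow 0$,
\[\sup_{\rho_0(s,t) < \eta_n} \left|\d{G}_n(\phi_{0,t} - \phi_{0,s})\right| \inprob 0,\]
where $\rho_0(s,t) := [\mathrm{Var}_{P_0}(\phi_{0,t} - \phi_{0,s})]^{1/2}$ is the intrinsic $L^2(P_0)$ semimetric induced on $\s{T}$ by the influence functions. The task thus reduces to showing that Euclidean neighborhoods in $\s{T}$ of radius $\delta/n^{1/2}$ are contained in $\rho_0$-neighborhoods whose radius tends to zero.

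For this bridge, I would expand
\[\rho_0(s,t)^2 \leq P_0(\phi_{0,t} - \phi_{0,s})^2 = \left[\Sigma_0(t,t) - \Sigma_0(s,t)\right] + \left[\Sigma_0(s,s) - \Sigma_0(s,t)\right].\]
By (A2), the first bracketed term tends to zero uniformly as $\|t-s\| \to 0$; applying (A2) with $s$ and $t$ interchanged, together with the symmetry $\Sigma_0(s,t) = \Sigma_0(t,s)$, the second bracket vanishes as well. Consequently $\eta_n := \sup_{\|t-s\| < \delta/n^{1/2}} \rho_0(s,t) \downarrow 0$, and combining the equicontinuity display with the remainder bound yields condition (A) with $r_n = n^{1/2}$.

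The only nontrivial step is the translation between the Euclidean semimetric on $\s{T}$ and the intrinsic $L^2(P_0)$ semimetric of the Donsker class; condition (A2) is tailor-made for exactly that translation, and without it there is no guarantee that the Donsker equicontinuity at a fixed $\rho_0$-scale delivers control at the Euclidean scale $\delta/n^{1/2}$. Once that link is in place, the rest is a routine application of standard empirical-process arguments.
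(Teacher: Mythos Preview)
Your proof is correct and follows essentially the same route as the paper's: both decompose via \eqref{asy_linear}, dispose of the remainder uniformly, and then use the $P_0$-Donsker property (A1) to obtain asymptotic $\rho_0$-equicontinuity of $\d{G}_n\phi_{0,\cdot}$, with (A2) supplying the translation from the intrinsic $L_2(P_0)$ semimetric to the Euclidean scale $\|t-s\|<\delta n^{-1/2}$. The paper spells out the chain Donsker $\Rightarrow$ weak convergence to a tight limit $\Rightarrow$ asymptotic tightness (Theorem~1.5.4 of \cite{van1996weak}) $\Rightarrow$ asymptotic equicontinuity (Theorem~1.5.7), whereas you compress this into a single invocation, but the substance is the same.
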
 
The proof of Theorem~\ref{thm:emp_process} is provided in Appendix~\ref{app:thm3}. In Section~\ref{survival}, we illustrate the use of Theorem~\ref{thm:emp_process} for the estimation of a G-computed distribution function. 

We note that conditions (A1) and (A2) are actually sufficient to establish uniform asymptotic equicontinuity, which as discussed above is stronger than (A). Therefore, Theorem~\ref{thm:emp_process} can also be used to prove asymptotic equivalence of the majorization/minorization correction procedure studied in \cite{daouia2012isotonic}.

\subsection{Special case II: kernel smoothed estimators}\label{sec:kern}

For certain parameters,  asymptotically linear estimators are not available. In particular, this is the case when the parameter of interest is not sufficiently smooth as a mapping of $P_0$. For example, density functions, regression functions, and conditional quantile functions do not permit asymptotically linear estimators in a nonparametric model when the exposure is continuous. In these settings, a common approach to nonparametric estimation is kernel smoothing.

Recent results suggest that, as a process, the only possible weak limit of $\{ r_n[\theta_n(t) - \theta_0(t)] : t \in \s{T}\}$ in $\ell^{\infty}(\s{T})$ may be zero when $\theta_n$ is a kernel smoothed estimator. For example, in the case of the Parzen-Rosenblatt estimator of a density function with bandwidth $h_n$, Theorem 3 of \cite{stupfler2016} implies that if $c_n := r_n \left(n h_n / |\log h_n|\right)^{-1/2} \to 0$, then $\{ r_n[\theta_n(t) - \theta_0(t)] : t \in \s{T}\}$ converges weakly to  zero in $\ell^{\infty}(\s{T})$, whereas if $c_n \to c \in (0, \infty]$, then it does not converge weakly to a tight limit in $\ell^{\infty}(\s{T})$. As a result, $\{ r_n[\theta_n(t) - \theta_0(t)] : t \in \s{T}\}$ only satisfies uniform stochastic equicontinuity for $r_n$ such that $r_n \left(n h_n / |\log h_n|\right)^{-1/2} \to 0$. However, for any such rate $r_n$, $r_n^{-1}$ is slower than the pointwise and uniform rates of convergence of $\theta_n - \theta_0$. As a result, $\theta_n$ and $\theta_n^*$ may not be asymptotically equivalent at the uniform rate of convergence of $\theta_n - \theta_0$, so that confidence intervals and regions based on the limit distribution of $\theta_n - \theta_0$, but centered around $\theta_n^*$, may not have correct coverage. We note that, while \cite{stupfler2016} establishes formal results for the Parzen-Rosenblatt estimator, we expect that the results therein extend to a variety of kernel smoothed estimators.

As a result of the lack of uniform stochastic equicontinuity of $r_n(\theta_n - \theta_0)$ for useful rates $r_n$, establishing (A) is much more difficult for kernel smoothed estimators than for asymptotically linear estimators. However, since (A) is weaker than uniform stochastic equicontinuity, it may still be possible. Here, we provide alternative sufficient conditions that imply condition (A) and that we have found useful for studying a kernel smoothed estimator $\theta_n$.

When the initial estimator $\theta_n$ is kernel smoothed, we can frequently show that
\begin{equation}
\sup_{t \in \s{T}} \left| r_n \left[ \theta_n(t) - \theta_0(t)\right] - a_n b_0(t) -  R_n(t) \right| \inprob 0\ , \label{eq:smooth}
\end{equation}
 where $b_0 : \s{T} \to \d{R}$ is a deterministic bias, $a_n$ is sequences of positive constants, and $R_n :\s{T} \to \d{R}$ is a random remainder term. We then have
\begin{align*}
& \sup_{\| t - s\| < \delta / r_n} \left| r_n \left[ \theta_n(t) - \theta_0(t) \right] - r_n \left[ \theta_n(s) - \theta_0(s) \right] \right|  \\
&\qquad= \sup_{\| t - s\| < \delta / r_n} a_n \left| b_0(t) - b_0(s) \right| +\sup_{\| t - s\| < \delta / r_n}\left| R_n(t) - R_n(s) \right| +\fasterthan(1) \ .
\end{align*}
 If $b_0$ is uniformly continuous on $\s{T}$ and $a_n = O(1)$, or $b_0$ is uniformly $\alpha$-H{\"o}lder on $\s{T}$ and $a_n = O\left( r_n^\alpha\right)$, then the first term on the right hand side tends to zero in probability. Attention may then be turned to demonstrating that the second term vanishes in probability. It appears difficult to provide a general characterization of the form of $R_n$ that encompasses kernel smoothed estimators. However, in our experience, it is frequently the case that $R_n(t)$ involves terms of the form $\d{G}_n \nu_{n,t}$, where $\nu_{n,t} : \s{X} \to \d{R}$ is a deterministic function for each $n \in \{1, 2, \dots\}$ and $t \in \s{T}$. In the course of demonstrating that $\sup_{\| t - s\| < \delta / r_n}\left| R_n(t) - R_n(s) \right| \inprob 0$, a rate of convergence for $\sup_{\| t - s\| < \delta / r_n}\left| \d{G}_n\left(  \nu_{n,t} - \nu_{n,s}\right) \right|$ is then required.   Defining $\s{F}_{n,\eta} := \{ \nu_{n,t} - \nu_{n,s} : \|t -s \| < \eta\}$ for each $\eta > 0$, this is equivalent to establishing a rate of convergence for the local empirical process $\| \d{G}_n \|_{\s{F}_{n,\delta / r_n}} := \sup_{\xi \in \s{F}_{n,\delta / r_n}} | \d{G}_n \xi|$.  Such rates can be established using tail bounds for empirical processes. We briefly comment on two approaches to obtaining such tail bounds.
 
% v_n(t)\d{G}_n \nu_{n,t} $v_n(t)$ is a possibly random function of $t$, and $\nu_{n,t} : \s{X} \to \d{R}$.
 
We first  define bracketing and covering numbers of a class of functions $\s{F}$ -- see \cite{van1996weak} for a comprehensive treatment. We denote by $\| F\|_{P,2} = [ P( F^2)]^{1/2}$ the $L_2(P)$ norm of a given $P$-square-integrable function $F:\mathscr{X}\rightarrow\mathbb{R}$. The bracketing number $N_{[]}(\varepsilon, \s{F}, L_2(P))$ of a class of functions $\s{F}$ with respect to the $L_2(P)$ norm is the smallest number of $\varepsilon$-brackets needed to cover $\s{G}$, where an $\varepsilon$-bracket is any set of functions $\{ f: \ell \leq f \leq u\}$ with $\ell$ and $u$ such that $\|\ell-u\|_{P,2} < \varepsilon$. The covering number $N(\varepsilon, \s{F}, L_2(Q))$ of $\s{F}$ with respect to the $L_2(Q)$ norm is the smallest number of $\varepsilon$-balls in $L_2(Q)$ required to cover $\s{F}$. The uniform covering number is the supremum of $N(\varepsilon\|F\|_{2,Q}, \s{F}, L_2(Q))$ over all discrete probability measures $Q$ such that $\|F\|_{Q,2} > 0$, where $F$ is an envelope function for $\s{F}$. The  bracketing and uniform entropy integrals for $\s{F}$ with respect to $F$ are then defined as
\begin{align*}
J_{[]}(\delta, \s{F}) &:= \int_0^{\delta} \left[ 1 + \log N_{[]}\left(\varepsilon \| F\|_{P_0,2}, \s{F}, L_2(P_0)\right) \right]^{1/2}\, d\varepsilon \\
J( \delta, \s{F}) &:= \sup_Q \int_0^{\delta} \left[ 1 + \log N\left(\varepsilon \| F\|_{Q,2}, \s{F}, L_2(Q)\right) \right]^{1/2} \, d\varepsilon \ .
\end{align*}
We discuss two approaches to controlling $\| \d{G}_n \|_{\s{F}_{n,\delta / r_n}}$ using these integrals. Suppose that $\s{F}_{n,\eta}$ has envelope function $F_{n,\eta}$ in the sense that $|\xi(x)| \leq F_{n,\eta}$ for all $\xi \in \s{F}_{n,\eta}$ and $x \in \s{X}$. The first approach is useful when $\| F_{n,\delta / r_n} \|_{P_0, 2}$ can be adequately controlled. Specifically, if either $J(1, \s{F}_{n, \delta/r_n})$ or $J_{[]}(1, \s{F}_{n, \delta/r_n})$ is $\boundeddet(1)$, then $\| \d{G}_n \|_{\s{F}_{n,\delta / r_n}} \leq M_\delta \| F_{n,\delta / r_n} \|_{P_0, 2}$ for all $n$ and some constant $M_\delta \in (0, \infty)$ not depending on $n$ by Theorems 2.14.1 and 2.14.2 of \cite{van1996weak}.

The second approach we consider is useful when the envelope functions do not shrink in expectation, but the functions in $\s{F}_{n,\eta}$ still get smaller in the sense that $\gamma_{n, \delta} := \sup_{\xi \in \s{F}_{n,\delta / r_n}} \|\xi\|_{P_0,2}$ tends to zero. For example, if $\nu_{n,t}$ is defined as $\nu_{n,t}(x) := I(0 \leq x \leq t)$ for each $x \in \s{X} \subseteq \d{R}$, $t \in [0,1]$, and $n$, then $F_{n,\eta} :  x \mapsto I(0 \leq x \leq 1)$ is the natural envelope function for $\s{F}_{n,\eta}$ for all $n$ and $\eta$, so that $\| F_{n,\delta / r_n} \|_{P_0, 2}$ does not tend to zero. However, $\gamma_{n,\delta} \leq \left(\bar{p}_0 \delta / r_n\right)^{1/2}$ if the density $p_0$ corresponding to $P_0$ is bounded above by $\bar{p}_0$, which does tend to zero. In these cases, the basic tail bounds in Theorem 2.14.1 and 2.14.2 of \cite{van1996weak} are too weak. Sharper, but slightly more complicated, bounds may be used instead. Specifically, if $F_{n, \delta / r_n} \leq C < \infty$ for all $n$ large enough and either
\begin{align*}
J\left( \gamma_{n, \delta}, \s{F}_{n,\delta / r_n}\right) + \frac{J\left( \gamma_{n, \delta}, \s{F}_{n,\delta / r_n}\right)^2}{\gamma_{n, \delta}^2 n^{1/2}} \, \text{ or } \, J_{[]}\left( \gamma_{n, \delta}, \s{F}_{n,\delta / r_n}\right) + \frac{J_{[]}\left( \gamma_{n, \delta}, \s{F}_{n,\delta / r_n}\right)^2}{\gamma_{n, \delta}^2 n^{1/2}}
\end{align*}
are $\fasterthandet\left(z_n^{-1} \right)$, then  $\| \d{G}_n \|_{\s{F}_{n,\delta / r_n}} = \fasterthan\left(z_n^{-1}\right)$ by Lemma 3.4.2 of \cite{van1996weak} and Theorem 2.1 of \cite{vandervaart2011}. Analogous statements hold if these expressions are $\boundeddet\left(z_n^{-1} \right)$.

In some cases, both of these approaches must be used to control different terms arising within $R_n(t)$, as for the conditional distribution function discussed in Section~\ref{cond_dist}.

\section{Illustrative examples}

\subsection{Example 1: Estimation of a G-computed distribution function}\label{survival}

We first demonstrate the use of Theorem~\ref{thm:emp_process} in the particular problem in which we wish to draw inference on a G-computed distribution function. Suppose that the data unit is the vector $X=(Y,  A, W)$, where $Y$ is an outcome, $A \in \{0,1\}$ is an exposure, and $W$ is a vector of baseline covariates. The observed data consist of independent draws $X_1,X_2,\ldots,X_n$ from $P_0\in\mathscr{M}$, where $\mathscr{M}$ is a nonparametric model.

For $P\in\mathscr{M}$ and $a_0 \in \{0,1\}$, we define the parameter value $\theta_{P,a_0}$ pointwise as $\theta_{P,a_0}(t) := E_P\left\{ P\left( Y \leq t \mid A = a_0, W\right)\right\}$, the G-computed distribution function of $Y$ evaluated at $t$, where the outer expectation is over the marginal distribution of $W$ under $P$. We are interested in estimating $\theta_{0,a_0} :=\theta_{P_0, a_0}$. This parameter is often of interest as an interpretable marginal summary of the relationship between $Y$ and $A$ accounting for the potential confounding induced by $W$. Under certain causal identification conditions, $\theta_{0, a_0}$ is the distribution function of the counterfactual outcome $Y(a_0)$ defined by the intervention that deterministically sets exposure to $A=a_0$ \citep{robins1986,gill2001}.

For each $t$, the parameter $P\mapsto \theta_{P,a_0}(t)$ is pathwise differentiable in a nonparametric model, and its nonparametric efficient influence function at $P\in\mathscr{M}$ is given by \[\varphi_{P,a_0,t}(y, a, w):=\frac{I(a = a_0)}{g_P(a_0 \mid w)}\left[I(y\leq t)-\bar{Q}_{P}(t  \mid a_0, w)\right]+\bar{Q}_P(t \mid a_0,  w)-\theta_{P,a_0}(t)\ ,\] where $g_P( a_0 \mid w):=P(A=a_0\mid W=w)$ is the propensity score and $\bar{Q}_P(t \mid a_0, w):=P\left(Y\leq t \mid A=a_0,W=w\right)$ is the conditional exposure-specific distribution function, as implied by $P$ \citep{van2003unified}. Given estimators $g_n$ and $\bar{Q}_n$ of $g_0:=g_{P_0}$ and $\bar{Q}_0:=\bar{Q}_{P_0}$, respectively, several approaches can be used to construct, for each $t$, an asymptotically linear estimator of $\theta_0(t)$ with influence function $\phi_{0,a_0,t}=\varphi_{P_0,a_0,t}$. For example, the use of either optimal estimating equations or the one-step correction procedure leads to the doubly-robust augmented inverse-probability-of-weighting estimator \[\theta_{n,a_0}(t) := \frac{1}{n}\sum_{i=1}^{n}\frac{I(A_i = a_0)}{g_n(a_0 \mid W_i)}\left[I(Y_i\leq t)-\bar{Q}_n(t \mid a_0, W_i)\right]+\frac{1}{n}\sum_{i=1}^{n}\bar{Q}_n(t \mid a_0, W_i)\] as discussed in detail in \cite{van2003unified}. Under conditions on $g_n$ and $\bar{Q}_n$, including consistency at fast enough rates, $\theta_{n,a_0}(t)$ is asymptotically efficient relative to $\mathscr{M}$. In this case, $\theta_{n,a_0}(t)$ satisfies \eqref{asy_linear} with influence function $\phi_{0,a_0,t}$. However, there is no guarantee that $\theta_{n,a_0}$ is monotone.

%The following result provides mild Lipschitz conditions on the true parameters under which (A1) and (A2), and (B) hold for $\phi_{0,t}$. The proof of Proposition~\ref{regularity_counterfactual} is provided in the appendix. 
In the context of this example, we can identify simple sufficient conditions under which conditions (A)--(B), and hence the asymptotic equivalence of the initial and isotonized estimators of the G-computed distribution function, are guaranteed. Specifically, we find this to be the case when: \begin{enumerate}[\ \ \ \ (i)]
\item there exists some $\eta>0$ such that $g_0(a_0 \mid W) \geq \eta$ almost surely under $P_0$, and;
\item there exist non-negative real-valued functions $K_1,K_2$ such that \[K_1(w)|t-s|\ \leq\ |\bar{Q}_0(t \mid a_0, w) - \bar{Q}_0(s \mid a_0, w)|\ \leq\ K_2(w)|t - s|\] for all $t, s \in \s{T}$, and such that, under $P_0$, $K_1(W)$ is strictly positive with non-zero probability and $K_2(W)$ has finite second moment.
\end{enumerate} %Details are provided in the Supplementary Material.

We conducted a simulation study to validate our theoretical results in the context of this particular example. For samples sizes $n\in\{100, 250, 500, 750, 1000\}$, we generated $1000$ random datasets as follows. We first simulated a bivariate covariate $W$ with independent components $W_1$ and $W_2$, respectively distributed as a Bernoulli variate with success probability $0.5$ and a uniform variate on $(-1,1)$. Given $W=(w_1,w_2)$, exposure $A$ was simulated from a logistic regression model with $P_0(A = 1 \mid W_1=w_1,W_2=w_2) = \text{expit}(0.5 +  w_1- 2 w_2)$. Given $W=(w_1,w_2)$ and $A=a$, $Y$ was simulated as the inverse-logistic transformation of a normal variate with mean $0.2 -0.3 a-4w_2$ and variance $0.3$.

For each simulated dataset, we estimated $\theta_{0,0}(t)$ and $\theta_{0,1}(t)$ for $t$ equal to each outcome value observed between $0.1$ and $0.9$. To do so, we used the estimator described above, with propensity score and conditional exposure-specific distribution function estimated using correctly-specified parametric models. We employed two correction procedures for the estimators $\theta_{n,0}$ and $\theta_{n,1}$. First, we projected $\theta_{n,0}$ and $\theta_{n,1}$ onto the space of monotone functions separately. Second, noting that $\theta_{0,0}(t) \leq \theta_{0,1}(t)$ for all $t$, so that $(a,t)\mapsto\theta_{0,a}(t)$ is component-wise monotone for this particular data-generating distribution, we considered the projection of $(a, t) \mapsto \theta_{n,a}(t)$ onto the space of bivariate monotone functions on $\{0,1\} \times \s{T}$. For each simulation and each projection procedure, we recorded the maximal absolute differences between (i) the initial and and projected estimates, (ii) the initial estimate and the truth, and (iii) the projected estimate and the truth. We also recorded the maximal widths of the initial and projected confidence bands.%whether the initial curve had any monotonicity violation as well 

Figure~\ref{fig:sim_results} displays the results of this simulation study, with output from the univariate and bivariate projection approaches summarized in the top and bottom rows, respectively. The left column displays the empirical distribution of the scaled maximum absolute discrepancy between $\theta_n$ and $\theta_n^*$ for all sample sizes studied. This plot confirms that the discrepancy between these two estimators indeed decreases faster than $n^{-1/2}$, as our theory suggests. Furthermore, for each $n$, the discrepancy is larger for the two-dimensional projection.

The middle column of Figure~\ref{fig:sim_results} displays the empirical distribution function of the ratio between the maximum discrepancy between $\theta_n$ and $\theta_0$ and that of $\theta_n^*$ and $\theta_0$. This plot confirms that $\theta_n^*$ is always at least as close to $\theta_0$ than is $\theta_n$ over $\s{T}_n$.  The maximum discrepancy between $\theta_n$ and $\theta_0$ can be more than 25\% larger than that between $\theta_n^*$ and $\theta_0$ in the univariate case, and up to 50 \% larger in the bivariate case. 

The right column of Figure~\ref{fig:sim_results} displays the empirical distribution function of the ratio between the maximum size of the initial uniform 95\% influence function-based confidence band and that of the isotonic band. For large samples, the maximal widths are often close, but for smaller samples, the initial confidence bands can be up to 50\% larger than the isotonic bands, especially for the bivariate case. The empirical coverage of both bands is provided in Table~\ref{tab:coverages}. The coverage of the isotonic band is essentially the same as the initial band for the univariate case, whereas it  is slightly larger than that of the initial band in the bivariate case.

\begin{figure}[ht]
\centering
\includegraphics[width=\linewidth]{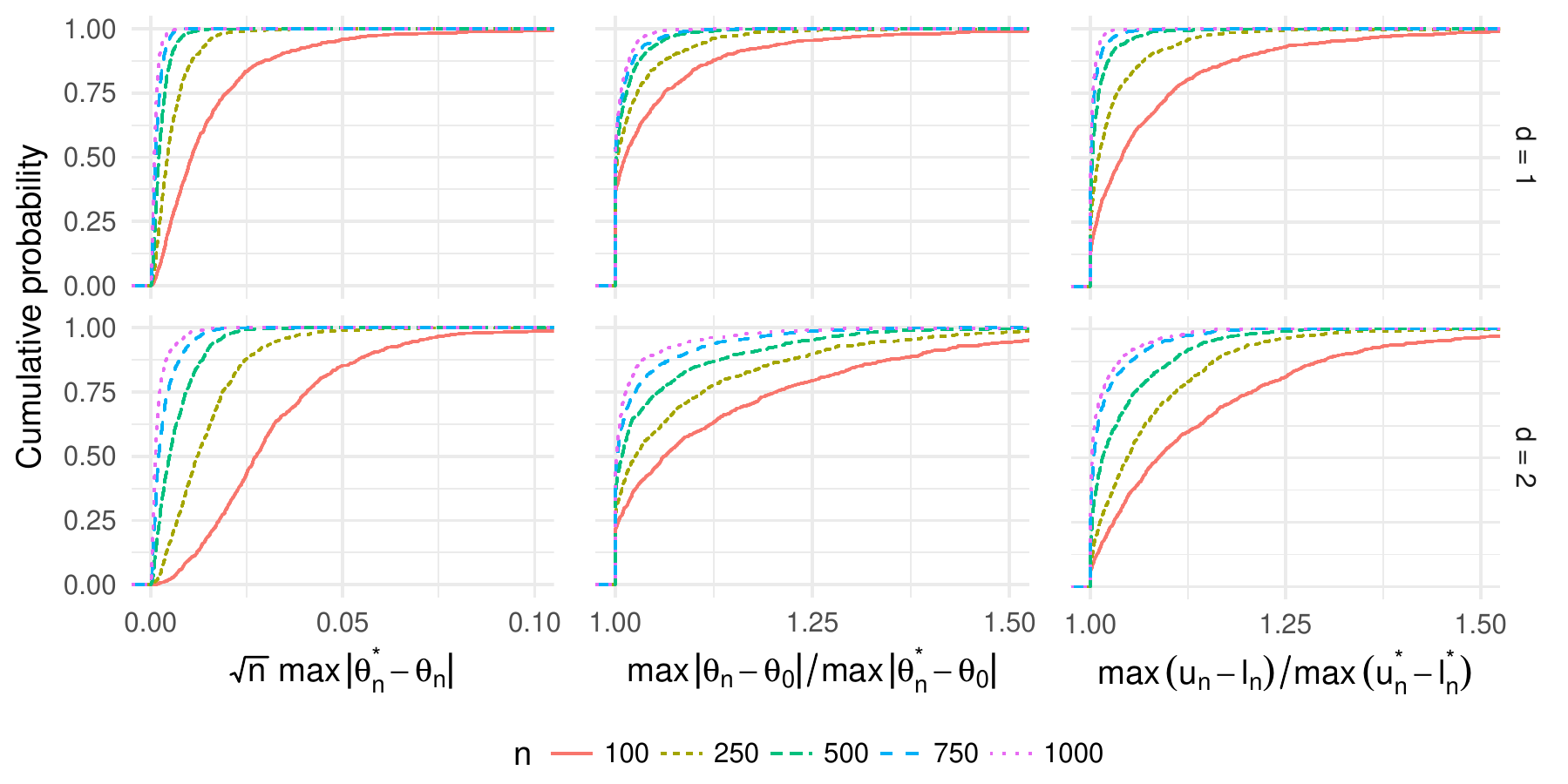}
\caption{Summary of simulation results for G-computed distribution function. Each plot shows cumulative distributions of a particular discrepancy over 1000 simulated datasets for different values of $n$. Left panel: maximal absolute difference between the initial and isotonic estimators over the grid used for projecting, scaled up by root-$n$. Middle panel: ratio of the maximal absolute difference between the initial estimator and the truth and the maximal absolute difference between the isotonic estimator and the truth. Right panel: ratio of the maximal width of the initial confidence band and the maximal width of the isotonic confidence band. The top row shows the results for the univariate projection, and the bottom row shows the results for the bivariate projection.}
\label{fig:sim_results}
\end{figure}

\begin{table}
\caption{Coverage of 95\% confidence bands for the true counterfactual distribution function.}
\vspace*{1em}
\label{tab:coverages}
\centering
\begin{tabular}{crrrrrr}

&$n$ & 100 & 250 & 500 & 750 & 1000 \\
\hline
\multirow{2}{*}{d=1}&Initial band & 92.5 & 94.1& 96.0  & 94.5 & 95.5 \\
&Monotone band &  92.5 & 94.1& 96.0  & 94.5 & 95.5 \\
\hline
\multirow{2}{*}{d=2}&Initial band &  93.9 &94.0 & 95.0 & 94.6 & 94.9 \\
&Monotone band & 95.7& 95.9& 95.5 &95.3& 95.1 \\
\hline
\end{tabular}
\end{table}

\subsection{Example 2: Estimation of a conditional distribution function}\label{cond_dist}

We next demonstrate the use of Theorem~\ref{monotone_supnorm} with dimension $d=2$ for drawing inference on a conditional distribution function. Suppose that the data unit is the vector $X = (A, Y)$, where $Y$ is an outcome and $A$ is now a continuous exposure. The observed data consist of independent draws $(A_1, Y_1),(A_2,Y_2),\ldots,(A_n,Y_n)$ from $P_0\in\mathscr{M}$, where $\mathscr{M}$ is a nonparametric model. We define the parameter value $\theta_P$ pointwise as $\theta_{P}(t_1, t_2) := P\left( Y \leq t_1 \mid A = t_2 \right)$. Thus, $\theta_P$ is the conditional distribution function of $Y$ at $t_1$ given $A = t_2$. The map $(t_1, t_2) \mapsto \theta_P(t_1, t_2)$ is necessarily monotone in $t_1$ for each fixed $t_2$, and in some settings, it may be known that it is also monotone in $t_2$ for each fixed $t_1$. This parameter completely describes the conditional distribution of $Y$ given $A$, and can be used to obtain the conditional mean, conditional quantiles, or any other conditional parameter of interest. 

For each $t_1$, the true function $\theta_0(t_1, t_2) = \theta_{P_0}(t_1, t_2)$ may be written as the conditional mean of $I(Y \leq t_1)$ given $A = t_2$. Hence, any method of nonparametric regression can be used to estimate $t_2\mapsto \theta_0(t_1, t_2)$ for fixed $t_1$, and repeating such a method over a grid of values of $t_1$ yields an estimator of the entire function. We expect that our results would apply to many of these methods. Here, we consider the local linear estimator \citep{fan1996local}, which may be expressed as
\[ \theta_n(t_1, t_2) :=  \frac{1}{nh_n} \sum_{i=1}^n I(Y_i \leq t_1)\left[ \frac{s_{2,n}(t_2)  - s_{1,n}(t_2) \left(A_i - t_2\right)}{s_{0,n}(t_2)s_{2,n}(t_2) - s_{1,n}(t_2)^2}\right] K\left( \frac{A_i - t_2}{h_n}\right) \ ,\]%\frac{ \sum_{i=1}^n I(Y_i \leq t_1) K\left( \frac{A_i - t_2}{h_n}\right)}{ \sum_{i=1}^nK\left( \frac{A_i - t_2}{h_n}\right)}\ ,\]
where $K:\d{R}\to\d{R}$ is a symmetric and bounded kernel function, $h_n \to 0$ is a sequence of bandwidths, and $s_{j,n}(t_2) :=  \frac{1}{nh_n} \sum_{i=1}^n \left(A_i - t_2\right)^j K\left( \frac{A_i - t_2}{h_n}\right)$ for $j \in \{0,1,2\}$. Under regularity conditions on the true distribution function $\theta_0$, the marginal density $f_0$ of $A$, the bandwidth sequence $h_n$, and the kernel function $K$, for any fixed $(t_1, t_2)$, $\theta_n$ satisfies
\[ (nh_n)^{1/2} \left[ \theta_n(t_1, t_2) - \theta_0(t_1, t_2) - h_n^2 V_K  b_0(t_1, t_2)\right] \indist N\left(0, S_K  v_0(t_1, t_2)\right), \]
where $V_K := \int x^2K(x)  dx$ is the variance of $K$, $S_K := \int K(x)^2  dx$, and $b_0(t_1, t_2)$ and $v_0(t_1, t_2)$ depend on the derivatives of $\theta_0$ and on $f_0$. If $h_n$ is chosen to be of order $n^{-1/5}$, the rate that minimizes the asymptotic mean integrated squared error of $\theta_n$ relative to $\theta_0$, then $n^{2/5} \left[ \theta_n(t_1, t_2) - \theta_0(t_1, t_2) \right]$ converges in law to a normal random variate with mean $V_K  b_0(t_1, t_2)$ and variance $S_K  v_0(t_1, t_2)$.  Under stronger regularity conditions, the rate of convergence of the uniform norm $\|\theta_n - \theta_0\|_\s{T}$ can be shown to be $(n h_n / \log n)^{1/2}$ \citep{hardle1988}.

Theorem~\ref{thm:emp_process} cannot be used to establish (A) in this problem, since $\theta_n$ is not an asymptotically linear estimator. Furthermore, as discussed above, recent results suggest that $\{ r_n[\theta_n(t) - \theta_0(t)] : t \in \s{T}\}$ does not converge weakly to a tight limit in $\ell^{\infty}(\s{T})$ for any useful rate $r_n$. Despite this lack of weak convergence, condition (A) can be verified directly in the context of this example under smoothness conditions on $\theta_0$ and $f_0$ using the tail bounds for empirical processes outlined in Section~\ref{sec:kern}. Denoting by $\theta_{0,t_2}'$ and $\theta_{0,t_2}''$ the first and second derivatives of $\theta_0$ with respect to its second argument, we define
\begin{align*}
 R_{\theta}^{(2)}(t, \delta) &:= \theta_0(t_1, t_2 + \delta) - \theta_0(t_1, t_2) - \delta\theta_{0,t_2}'(t_1, t_2) - \tfrac{1}{2} \delta^2\theta_{0,t_2}''(t_1, t_2)\ ,
 \end{align*}
 and $R_{f}^{(1)}(t, \delta) := f_0(t_2 +\delta ) - f_0(t_2) - \delta f_0'(t_2)$, where $f_0'$ is the derivative of $f_0$. We then introduce the following conditions on $\theta_0$, $f_0$, and $K$:
 \begin{description}
\item[(d)] $\theta_{0,t_2}''$ exists and is continuous on $\s{T}$, and as $\delta \to 0$, $\sup_{t \in \s{T}} | R_{\theta}^{(2)}(t, \delta)| = \fasterthandet(\delta^2)$;
\item[(e)] $\inf_{t\in \s{T}} f_0(t_2) > 0$, $f_0'$ exists and is continuous on $\s{T}$, and $\sup_{t \in \s{T}}| R_{f}^{(1)}(t, \delta)|=  \fasterthandet(\delta)$;
\item[(f)] $K$ is a Lipschitz function supported on $[-1,1]$ and satisfies condition (M) of \cite{stupfler2016}.
\end{description} 
We also define $\nu_{n,t}(y,a) :=  \left[ I(y \leq t_1) - \theta_0(t_1, a)\right]  K\left( \frac{a - t_2}{h_n}\right)$, $g_n(t_2) := s_{0,n}(t_2)s_{2,n}(t_2) - s_{1,n}(t_2)^2$, and $R_n(t) := h_n^{-1/2} \left[ \frac{ s_{2,n}(t_2)}{g_n(t_2)} \d{G}_n \nu_{n,t} -  \frac{ s_{1,n}(t_2)}{g_n(t_2)} \d{G}_n \left( \ell_t \nu_{n,t}\right)\right]$. We then have the following result. 
\begin{proposition}\label{prop:kern_unif}
Suppose conditions (d)-(f) hold, $nh_n^5 =\boundeddet(1)$, and $nh_n^4/ \log h_n^{-1} \longrightarrow \infty$. Then
\[ \sup_{t \in \s{T}} \left| \left(nh_n \right)^{1/2} \left[ \theta_n(t_1, t_2) - \theta_0(t_1, t_2) \right] - \left(n h_n^5\right)^{1/2} \tfrac{1}{2}\theta_{0, t_2}''(t_1, t_2)  K_2 - R_n(t)  \right| \inprob 0 \ .\]
\end{proposition}
Proposition~\ref{prop:kern_unif} aids in establishing the following result, which formally establishes asymptotic equivalence of the local linear estimator of a conditional distribution function and its correction obtained via isotonic regression at the rate $r_n = (n h_n)^{1/2}$.
 \begin{proposition}\label{prop:cond_dist}
Suppose conditions (d)-(f) hold and $nh_n^5 \longrightarrow c \in (0, \infty)$. Then condition (A) holds for the local linear estimator with $r_n = (nh_n)^{1/2}$. 
%\[ \sup_{\|t - s\| \leq \delta / r_n} \left| r_n \{ \theta_n(t_1, t_2) - \theta_0(t_1, t_2)\} - r_n \{ \theta_n(s_1, s_2) - \theta_0(s_1, s_2)\}\right| \inprob 0,\]
%for any $\delta > 0$ and $r_n = \sqrt{n h_n}$.
\end{proposition}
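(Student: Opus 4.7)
The plan is to invoke Proposition~\ref{prop:kern_unif} to write
\[r_n[\theta_n(t) - \theta_0(t)] = (n h_n^5)^{1/2}\tfrac{1}{2} K_2\,\theta_{0, t_2}''(t_1,t_2) + R_n(t) + \fasterthan(1)\]
uniformly in $t \in \s{T}$, thereby reducing (A) to showing that both the deterministic bias term and the stochastic remainder $R_n$ have uniform oscillation $\fasterthan(1)$ over $\|t-s\| < \delta/r_n$. Since $n h_n^5 \to c \in (0,\infty)$ forces $h_n$ to be of order $n^{-1/5}$ and $r_n^{-1}$ to be of order $h_n^2$, the relevant neighborhood scale is $h_n^2$.

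The bias oscillation is immediate: by condition (d), $\theta_{0,t_2}''$ is continuous on the compact rectangle $\s{T}$, hence uniformly continuous, while $(n h_n^5)^{1/2}$ is $\boundeddet(1)$, so the supremum vanishes deterministically as $r_n \to \infty$. For the remainder, I would decompose
\[R_n(t) - R_n(s) = h_n^{-1/2}\alpha_n(t_2)\d{G}_n(\nu_{n,t} - \nu_{n,s}) + h_n^{-1/2}[\alpha_n(t_2) - \alpha_n(s_2)]\d{G}_n \nu_{n,s} + \{\text{analogous }\beta_n \text{ pieces}\},\]
with $\alpha_n := s_{2,n}/g_n$ and $\beta_n := s_{1,n}/g_n$. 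Standard kernel calculations under (e)--(f) yield $\alpha_n(t_2) = 1/f_0(t_2) + \fasterthan(1)$ and $\beta_n(t_2) = \bounded(1)$ uniformly in $t_2$, each with a Lipschitz-type modulus of continuity on scale $h_n^2$. Combined with the uniform bounds $\sup_t |\d{G}_n \nu_{n,t}| = \bounded(h_n^{1/2})$ and $\sup_t|\d{G}_n(\ell_t \nu_{n,t})| = \bounded(h_n^{3/2})$ (from the envelope $L_2(P_0)$-norms $\boundeddet(h_n^{1/2})$ and $\boundeddet(h_n^{3/2})$ together with VC-type covering bounds), the $\alpha_n$- and $\beta_n$-difference pieces are negligible, and the problem reduces to the local empirical process estimates
\[\sup_{\|t-s\| < \delta/r_n}|\d{G}_n(\nu_{n,t} - \nu_{n,s})| = \fasterthan(h_n^{1/2}), \qquad \sup_{\|t-s\| < \delta/r_n}|\d{G}_n(\ell_t\nu_{n,t} - \ell_s\nu_{n,s})| = \fasterthan(h_n^{3/2}).\]

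The principal obstacle is these two local empirical process bounds. The natural envelopes of $\s{F}_{n,\eta} := \{ \nu_{n,t} - \nu_{n,s} : \|t-s\| < \eta\}$ and its $\ell_t$-weighted analogue do not shrink with $\eta$, being dominated by a sum of two kernel-supported indicators, so I plan to follow the second approach from Section~\ref{sec:kern}. Using (d)--(f), the Lipschitzness of $K$, and the identity $E\{|I(Y \leq t_1) - I(Y \leq s_1)|^2 \mid A\} = |\theta_0(t_1, A) - \theta_0(s_1, A)|$, I expect to verify $\gamma_{n,\delta} := \sup_{\xi \in \s{F}_{n,\delta/r_n}} \|\xi\|_{P_0,2} = \boundeddet(h_n^{3/2})$ (and $\boundeddet(h_n^{5/2})$ for the weighted class), together with $J(\varepsilon, \s{F}_{n,\eta}) = \boundeddet(\varepsilon)$ by a finite-dimensional Lipschitz parametrization in $t$. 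Theorem~2.1 of \cite{vandervaart2011} as quoted in Section~\ref{sec:kern} then delivers the required rates once one checks that $n^{-1/2} = \fasterthandet(h_n^{1/2})$, equivalent to $nh_n \to \infty$ and assured by $nh_n^5 \to c$. Condition (A) follows.
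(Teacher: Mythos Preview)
Your overall strategy matches the paper's proof: invoke Proposition~\ref{prop:kern_unif}, dispose of the bias oscillation by uniform continuity of $\theta_{0,t_2}''$, then decompose $R_n(t)-R_n(s)$ into coefficient-difference pieces and local-empirical-process pieces, and control each via maximal inequalities. The paper carries this out through Lemma~4 (which gives the rates you call ``Lipschitz-type modulus'' for $\alpha_n,\beta_n$) and a careful splitting $\nu_{n,t}=\nu_{n,t,1}-\nu_{n,t,2}$ with $\nu_{n,t,1}(y,a)=I(y\leq t_1)K((a-t_2)/h_n)$ and $\nu_{n,t,2}(y,a)=\theta_0(t_1,a)K((a-t_2)/h_n)$.

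Two places where your sketch needs patching. First, your ``envelope $L_2(P_0)$-norm $\boundeddet(h_n^{1/2})$'' for $\{\nu_{n,t}:t\in\s{T}\}$ is wrong: since $t_2$ ranges freely, the envelope is simply $\bar K$ and has $L_2$ norm of order one. What is $\boundeddet(h_n^{1/2})$ is $\sup_t\|\nu_{n,t}\|_{P_0,2}$, the \emph{function-level} norm; exploiting this requires the sharper bound of Theorem~2.1 in \cite{vandervaart2011} (your ``second approach''), which produces the additional harmless factor $(\log h_n^{-1})^{1/2}$ that the paper carries throughout. Second, your ``finite-dimensional Lipschitz parametrization'' cannot give the entropy bound for the full class, because $t_1\mapsto I(y\leq t_1)$ is not pointwise Lipschitz; VW~2.7.11 does not apply. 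The paper's split handles this: $\nu_{n,t,1}$ is a product of two uniformly bounded VC classes (indicators and kernels, the latter by condition~(f)), giving polynomial uniform covering numbers; $\nu_{n,t,2}$ is genuinely Lipschitz in $t$ and is handled by bracketing. Both pieces then yield $J(\eta)\lesssim\eta(\log\eta^{-1})^{1/2}$ rather than your $J(\eta)=\boundeddet(\eta)$, but the log factor is immaterial. With these corrections your $\gamma_{n,\delta}$ computation and the remainder of the argument go through; indeed your claimed $\gamma_{n,\delta}=\boundeddet(h_n^{3/2})$ is slightly sharper than the paper's $\boundeddet((nh_n^3)^{-1/2})\asymp h_n$, though both suffice.
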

The proof of Propositions~\ref{prop:kern_unif} and~\ref{prop:cond_dist} are provided in Supplementary Material. These results may be of interest in their own right for establishing other properties of the local linear estimator.

As with the first example, we conducted a simulation study to validate our theoretical results. For samples sizes $n\in\{100, 250, 500, 750, 1000\}$, we generated $1000$ random datasets as follows. We first simulated $A$ as a Beta$(2,3)$ variate. Given $A=a$, $Y$ was simulated as the inverse-logistic transformation of a normal variate with mean $0.5 \times [1 + (a - 1.2)^2]$ and variance one.

For each simulated dataset, we estimated $\theta_0(y,a)$ for each $(y,a)$ in an equally spaced square grid of mesh $\omega_n = n^{-4/5}$. For each unique $y$ in this grid, we estimated the function $a\mapsto \theta_0(y, a)$ using the local linear estimator, as implemented in the \texttt{R} package \texttt{KernSmooth} \citep{KernSmooth, wand1995kernsmooth}. For each value of $y$ in the grid, we computed the optimal bandwidth based on the direct plug-in methodology of \cite{ruppert1995effective} as implemented by the \texttt{dpill} function, and we then set our bandwidth as the average of these $y$-specific bandwidths. We constructed initial confidence bands using a variable-width nonparametric bootstrap \citep{hall2001bootstrap}.

We first note that, for all sample sizes considered, over 99\% of simulations had monotonicity violations in both the $y$- and $a$-directions. Figure~\ref{fig:sim_results_cond_dist} displays the results of this simulation study. The left exhibit of Figure~\ref{fig:sim_results_cond_dist} confirms that the discrepancy between $\theta_n$ and $\theta_n^*$ decreases faster than $r_n^{-1}=n^{-2/5}$, as our theory suggests. The middle exhibit indicates that in roughly 50\% of simulations, there is less than 5\% difference between  $\| \theta_n^* - \theta_0\|_{\s{T}_n}$ and $\|\theta_n - \theta_0\|_{\s{T}_n}$, but even for $n=1000$, in roughly 25\% of simulations, $\theta_n^*$ offers at least a 25\% improvement in estimation error. In smaller samples, the estimation error of $\theta_n^*$ is less than half that of $\theta_n$ in 5-10\% of simulations. The rightmost exhibit indicates that the projected confidence bands regularly reduce the uniform size of the initial bands by 10-20\%. Finally, the empirical coverage of uniform 95\% bootstrap-based bands and their projected versions is provided in Table~\ref{tab:coverages2}. As before, the projected band is always more conservative than the initial band, and the difference in coverage diminishes as $n$ grows. However, the initial bands in this example are anti-conservative, even at $n=1000$, likely due to the slower rate of convergence, and the corrected bands offer a much more substantial improvement in this example than in the first.

\begin{figure}[ht]
\centering
\includegraphics[width=\linewidth]{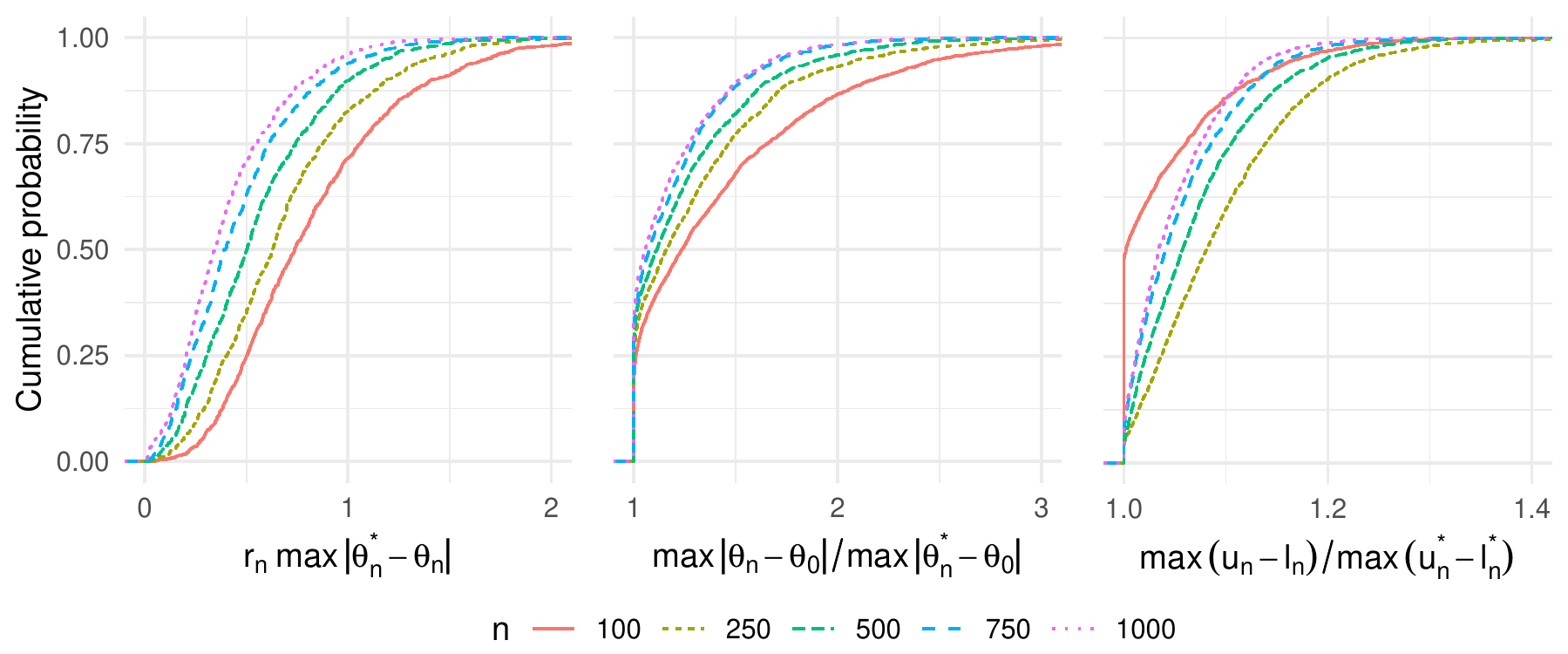}
\caption{Summary of simulation results for conditional distribution function. The three columns display the same results as those in Figure~\ref{fig:sim_results}.}
\label{fig:sim_results_cond_dist}
\end{figure}

\begin{table}[h]
\caption{Coverage of 95\% confidence bands for the true conditional distribution function.}
\vspace*{1em}
\label{tab:coverages2}
\centering
\begin{tabular}{rrrrrr}
\hline
$n$ & 100 &  250 & 500 & 750 & 1000 \\
\hline
Initial band  & 37.6 & 64.9 & 83.2 & 86.3 & 89.7\\ 
Monotone band & 60.8 & 80.4 & 90.3 & 92.3 & 93.9  \\ 
\hline
\end{tabular}
\end{table}

\section{Discussion}\label{conclusion}

Many estimators of function-valued parameters in nonparametric and semiparametric models are not guaranteed to respect shape constraints on the true function. A simple and general solution to this problem is to project the initial estimator onto the constrained parameter space over a grid whose mesh goes to zero fast enough with sample size. However, this introduces the possibility that the projected estimator has different properties than the original estimator.  In this paper, we studied the important shape constraint of multivariate component-wise monotonicity. We provided results indicating that the projected estimator is generically no worse than the initial estimator, and that if the true function is strictly increasing and the initial estimator possesses a relatively weak type of stochastic equicontinuity, the projected estimator is uniformly asymptotically equivalent to the initial estimator. We provided especially simple sufficient conditions for this latter result when the initial estimator is uniformly asymptotically linear, and provided guidance on establishing the key condition for kernel smoothed estimators.

We studied the application of our results in two examples: estimation of a G-computed distribution function, for use in understanding the effect of a binary exposure on an outcome when the exposure-outcome relationship is confounded by recorded covariates, and of a conditional distribution function, for use in characterizing the marginal dependence of an outcome on a continuous exposure. In numerical studies, we found that the projected estimator yielded  improvements over the initial estimator. The improvements were especially strong in the latter example.

In our examples, we only studied corrections in dimensions $d=1$ and $d=2$. In future work, it would be interesting to consider corrections in dimensions higher than 2. For example, for the conditional distribution function, it would be of interest to study multivariate local linear estimators for a continuous exposure $A$ taking values in $\d{R}^{d-1}$ for $d > 2$. Since tailored algorithms for computing the isotonic regression do not yet exist for $d > 2$, it would also be of interest to determine whether a version of Theorem~\ref{monotone_supnorm} could be established for the relaxed isotonic estimator proposed by \cite{fokianos2017integrated}. Alternatively, it is possible that the uniform stochastic equicontinuity currently required by \cite{chernozhukov2010quantile} and \cite{daouia2012isotonic} for asymptotic equivalence of the rearrangement- and envelope-based corrections, respectively, could be relaxed along the lines of our condition (A). Finally, our theoretical results do not give the exact asymptotic behavior of the projected estimator or projected confidence band when the true function possesses flat regions. This is also an interesting topic for future research.

\vspace{.2in}
\singlespacing
{\footnotesize
\section*{Acknowledgements}
The authors gratefully acknowledge support from the Career Development Fund of the Department of Biostatistics at the University of Washington (MC) and from NIAID grants 5UM1AI058635 (TW, MC) and 5R01AI074345 (MJvdL).
}

\doublespacing
\appendix

\section{Proof of Theorem~\ref{thm:barlow}}

Part (i) follows from Corollary B to Theorem 1.6.1 of \cite{robertson1988order}. For parts (ii) and (iii), we note that by assumption
\[ |\theta_n^*(t) - \theta_0(t)|\ \leq\ \sum_k \lambda_{k,n}(t) |\theta_n^*(s_k) - \theta_0(s_k)| + \sum_k \lambda_{k,n}(t) |\theta_0(s_k) - \theta_0(t)|\] for every $t \in \s{T}$, 
where $\sum_k \lambda_{k,n}(t) = 1$, and for each $k$, $s_k \in \s{T}_n$ and $\| s_k - t\| \leq 2\omega_n$. By part (i), the first term is bounded above by $\sup_{s \in \s{T}_n} | \theta_n(s) - \theta_0(s)|$. The second term is bounded above by $\gamma(2\omega_n)$, where we define $\gamma(\delta) := \sup \{ |\theta_0(t) - \theta_0(s)| : t, s \in \s{T}, \|t - s\| \leq \delta\}$.  If $\theta_0$ is continuous on $\s{T}$, then it is also uniformly continuous since $\s{T}$ is compact. Therefore, $\gamma(\delta) \to \gamma(0)=0$ as $\delta \to 0$, so that $\gamma(2\omega_n) \inproblow 0$ if $\omega_n \inproblow 0$. If $\gamma(\delta) =\fasterthandet(\delta^{\alpha})$ as $\delta \to 0$, then $\gamma(2\omega_n) = \fasterthan(\omega_n^{\alpha})$.

Part (iv) follows from the proof of Proposition 3 of \cite{chernozhukov2009rearrangement}, which applies  to any order-preserving monotonization procedure. For the first statement of (v), by their definition as minimizers of the least-squares criterion function, we note that $\sum_{t \in \s{T}_n} u_n^*(t) = \sum_{t \in \s{T}_n} u_n(t)$, and similarly for $\ell_n^*$. The second statement of (v) follows from a slight modification of Theorem 1.6.1 of \cite{robertson1988order}. As stated, the result says that $\sum_{t \in \s{T}_n} G( \theta^*(t) -  \theta(t) ) \leq \sum_{t \in \s{T}_n} G(\theta(t) -  \psi(t))$ for any convex function $G : \d{R} \to \d{R}$ and monotone function $\psi$, where $\theta^*$ is the isotonic regression of $\theta$ over $\s{T}_n$. A straightforward adaptation of the proof indicates that $\sum_{t \in \s{T}_n} G( \theta_1^*(t) -  \theta_2^*(t) ) \leq \sum_{t \in \s{T}_n} G(\theta_1(t) -  \theta_2(t))$, where now $\theta_1^*$ and $\theta_2^*$ are the isotonic regressions of $\theta_1$ and $\theta_2$ over $\s{T}_n$, respectively. As in Corollary B, taking $G(x) = |x|^p$ and letting $p \to\infty$ yields that $\| \theta_1^* - \theta_2^*\|_{\s{T}_n} \leq \|\theta_1 - \theta_2\|_{\s{T}_n}$. Applying this with $\theta_1 = u_n$ and $\theta_2 = \ell_n$ establishes the second portion of (v). \qed

\section{Proof of Theorem~\ref{monotone_supnorm}}

We first prove Lemmas~\ref{lemma:moduli_of_continuity},~\ref{neighborhoods}, and~\ref{pava}.

\begin{proof}[\bfseries Proof of Lemma~\ref{lemma:moduli_of_continuity}] In view of the triangle inequality, we note that $\left| \theta_n(t) - \theta_n(s)\right|$ is bounded above by $\left| \{\theta_n(t) - \theta_0(t)\} - \{\theta_n(s) - \theta_0(s)\}\right| + \left|\theta_0(t) - \theta_0(s) \right|$. The first term is $\fasterthan(r_n^{-1})$ by (A), whereas the second term is $\fasterthan(r_n^{-1})$ by (B).\end{proof}

\begin{proof}[\bfseries Proof of Lemma~\ref{neighborhoods}]
Let $\epsilon > 0$ and $\eta_n := \epsilon/r_n$. Suppose that $\kappa_n > \eta_n$. Then, there exist $s,t \in \s{T}$ with $s < t$ and $\|t -s\| > \eta_n$ such that $\theta_n(s) \geq \theta_n(t)$. We claim that there must also exist $s^*, t^* \in \s{T}$ with $s^* < t^*$ and $\|t^* - s^*\| \in [\eta_n / 2, \eta_n]$ such that $\theta_n(s^*) \geq \theta_n(t^*)$. To see this, let $J = \lfloor \|t - s\| / (\eta_n / 2)\rfloor - 1$, and note that $J \geq 1$. Define $t_j := s + (j\eta_n/2)(t-s) / \|t-s\|$ for $j = 0,1, \dotsc, J$, and set $t_{J+1} := t$. Thus, $t_j < t_{j+1}$ and $\|t_{j+1} - t_j\| \in [\eta_n / 2, \eta_n]$ for each $j = 0,1, \dotsc, J$. Since then $\sum_{j=0}^J [\theta_n(t_{j+1}) - \theta_n(t_{j})] = \theta_n(t) - \theta_n(s) \leq 0$, it must be that $\theta_n(t_{j+1}) \leq \theta_n(t_{j})$ for at least one $j$. This proves the claim.

We now have that $\kappa_n > \eta_n$ implies that there exist $s, t \in \s{T}$ with $s < t$ and $\|t - s\| \in [\eta_n / 2, \eta_n]$ such that $\theta_n(s) \geq \theta_n(t)$.  This further implies that 
\[ \{\theta_n(t) - \theta_0(t)\} - \{\theta_n(s) - \theta_0(s)\} \leq -\{\theta_0(t) - \theta_0(s)\} \leq -K_0\|t-s\| \leq -K_0 \eta_n /2\]
by condition (B). Finally, this allows us to write
\begin{align*}
P_0\left(\kappa_n >  \epsilon /r_n\right)\ \leq\ P_0\left( \sup_{\|t -s\| \leq \epsilon / r_n} \left| r_n[\theta_n(t) - \theta_0(t)] - r_n[\theta_n(s) - \theta_0(s)] \right|  \geq K_0\epsilon/2\right).
\end{align*}
By condition (A), this probability tends to zero for every $\epsilon > 0$, which completes the proof.\end{proof}

\begin{proof}[\bfseries Proof of Lemma~\ref{pava}]
By Theorem 1.4.4 of \cite{robertson1988order}, for any $t \in \s{T}_n$,
\[ \theta_n^*(t) = \max_{U \in \s{U}_t} \min_{L \in \s{L}_t} \theta_n(U \cap L)=  \min_{L \in \s{L}_t} \max_{U \in \s{U}_t}\theta_n(U \cap L),\]
where, for any finite set $S\subseteq \s{T}_n$,  $\theta_n(S)$ is defined as $|S|^{-1} \sum_{s \in S} \theta_n(s)$. The sets $U$ range over the collection $\s{U}_t$ of upper sets of $\s{T}_n$ containing $t$, where $U\subseteq \s{T}_n$ is called an upper set if $t_1 \in U, t_2 \in \s{T}_n$ and $t_1 \leq t_2$ implies $t_2 \in U$.  The sets $L$ range over the collection $\s{L}_t$ of lower sets of $\s{T}_n$ containing $t$, where $L\subseteq \s{T}_n$ is called a lower set if $t_1 \in L, t_2 \in \s{T}_n$ and $t_2 \leq t_1$ implies $t_2 \in L$. 

Let $U_t := \{ s : s \geq t\}$ and $L_t := \{s : s \leq t\}$. First, suppose there exists $L_0\in\s{L}_t$ and $s_0 \in L_0$ with $s_0>t$ and $\|t - s_0\| > \kappa_n$. Then, we claim that there exists another lower set $L_0'\in \s{L}_t$ such that $\theta_n( U_t \cap L_0) > \theta_n( U_t \cap L_0')$. If $\theta_n(U_t \cap L_0) > \theta_n(t) = \theta_n(U_t \cap L_t)$, then $L_0' = L_t$ satisfies the claim. Otherwise, if $\theta_n( U_t \cap L_0) \leq \theta_n(t)$, let $L_0' := L_0 \setminus \{ s : s > t, \| t - s\| > \kappa_n\}$. One can verify that $L_0'\in \s{L}_t$, and since $s_0 \in L_0\setminus L_0'$, $L_0'$ is a strict subset of $L_0$. Furthermore, by definition of $\kappa_n$, $\theta_n(s) > \theta_n(t)$ for all $s > t$ such that $\| t- s\| > \kappa_n$, and since $\theta_n( U_t \cap L_0) \leq \theta_n(t)$, removing these elements from $L_0$ can only reduce the average, so that $\theta_n(U_t \cap L_0') < \theta_n( U_t \cap L_0)$. This establishes the claim. By an analogous argument, we can show that if there exists $U_0\in\s{U}_t$ and $s_0\in U_0$ with $s_0 < t$ and $\|t - s_0\| > \kappa_n$, then there exists another upper set $U_0'\in \s{U}_t$ such that $\theta_n(U_0 \cap L_t) < \theta_n(U_0' \cap L_t)$.%Let $s \in L'$ and $s' < s$. Clearly $s \in L$, so $s' \in L$ as well. If $s' \leq t$ then $s' \in L'$. If $s' > t$ then $s > t$ so  $\| t- s\| \leq \kappa_n$, which implies $\|t - s'\| \leq \kappa_n$, so $s' \in L'$. Thus $L'$ is a viable lower set and

Let $L^* \in \argmin_{L \in \s{L}_t} \theta_n(U_t \cap L)$ and $U^* \in \argmax_{U \in \s{U}_t} \theta_n(U \cap L_t)$. Then
\begin{align*}
\theta_n^*(t) = \max_{U \in \s{U}_t} \min_{L\in\s{L}_t} \theta_n (U \cap L)\ &\geq\ \min_{L \in \s{L}_t} \theta_n(U_t \cap L) = \theta_n(U_t \cap L^*)\\
\theta_n^*(t) = \min_{L \in \s{L}_t} \max_{U \in \s{U}_t} \theta_n(U \cap L)\ &\leq\ \max_{U \in \s{U}_t} \theta_n(U \cap L_t) = \theta_n(U^* \cap L_t)\ .
\end{align*}
Hence, $\theta_n(U_t \cap L^*) \leq \theta_n^*(t) \leq \theta_n(U^* \cap L_t)$. By the above argument, $\theta_n(U_t \cap L^*) \geq \inf\{ \theta_n(s) : s \geq t, \| t - s\| \leq \kappa_n\}$ and $\theta_n(U^* \cap L_t) \leq \sup\{ \theta_n(s) : s \leq t, \|t - s\| \leq \kappa_n\}$. Therefore,
\[ \inf\{ \theta_n(s) - \theta_n(t) : \| t - s\| \leq \kappa_n\} \leq \theta_n^*(t) - \theta_n(t) \leq \sup\{ \theta_n(s) - \theta_n(t) : \|t - s\| \leq \kappa_n\}\ ,\]
and thus, $|\theta_n^*(t) - \theta_n(t) | \leq \sup\{ |\theta_n(s) - \theta_n(t)| : \|t - s\| \leq \kappa_n\}$. Taking the maximum over $t \in \s{T}_n$ yields the claim.\end{proof}

The proof of Theorem~\ref{monotone_supnorm} follows easily from Lemmas~\ref{lemma:moduli_of_continuity},~\ref{neighborhoods}, and~\ref{pava}.
\begin{proof}[\bfseries Proof of Theorem~\ref{monotone_supnorm}]
By construction, for each $t \in \s{T}$, we can write 
\[|\theta_n^*(t) - \theta_n(t)|\ \leq\ \Sigma_{j=1}^{2^d} \lambda_{j,n}(t) | \theta_n^*(s_j) - \theta_n(s_j)|  + \Sigma_{j=1}^{2^d} \lambda_{j,n}(t) | \theta_n(s_j) - \theta_n(t)|\ ,\]
where $s_j \in \s{T}_n$ and $\| s_j - t\| \leq 2\omega_n$ for all $t,s_j$ by definition. Thus, since $\sum_j  \lambda_{j,n}(t) = 1$,
\[ \sup_{t \in \s{T}} |\theta_n^*(t) - \theta_n(t)|\ \leq\ \max_{t \in \s{T}_n}  |\theta_n^*(t) - \theta_n(t)| + \sup_{\| s - t\| \leq 2\omega_n}  | \theta_n(s) - \theta_n(t)|\ .\]
By Lemma~\ref{pava}, the first summand is bounded above by $\sup_{\|s- t\| \leq \kappa_n} |\theta_n(s) - \theta_n(t)|$,  which is $\fasterthan(r_n^{-1})$ by Lemmas~\ref{lemma:moduli_of_continuity} and~\ref{neighborhoods}. The second summand is $\fasterthan(r_n^{-1})$ by Lemma~\ref{lemma:moduli_of_continuity}. \end{proof}

\section{Proof of Corollary~\ref{cor:band}}\label{app:cor1}

%a $\tilde{t} \in \s{T}_n$ such that $\Psi_n(t) = \Psi_n(\tilde{t})$ and $\| t - \tilde{t}\| \leq \omega_n$. Hence by assumption and Lemma~\ref{neighborhoods} we have 
%\[ \sup_{T}|\hat\Psi_n(t) - \Psi_n(t)| \leq \sup_{T}|\hat\Psi_n(\tilde{t}) - \Psi_n(\tilde{t})| +  \sup_{T}|\Psi_n(\tilde{t}) - \Psi_n(t)| =   \max_{t\in \s{T}_n}|\hat\Psi_n(t) - \Psi_n(t)| + o_P(r_n^{-1}).\]
%For the second term, by Lemma~\ref{pava}, $\max_{t \in \s{T}_n}|  \hat\Psi_n(t) - \Psi_n(t) | \leq \sup_{\|s- t\| \leq h_n} |\Psi_n(s) - \Psi_n(t)|$,  which is $o_P(r_n^{-1})$ by Lemma~\ref{lemma:moduli_of_continuity} and Lemma~\ref{neighborhoods}
We note that $\ell_n(t) \leq \theta_0(t) \leq u_n(t)$ if and only if
\begin{align*}
\left\{ r_n[\theta_n(t) - \ell_n(t)] - \gamma_\alpha(t) \right\} + \gamma_\alpha(t)&\geq r_n [\theta_n(t) - \theta_0(t) ]\\
&\geq -\gamma_\alpha(t) - \left\{ r_n[u_n(t) - \theta_n(t)] - \gamma_\alpha(t) \right\}.
\end{align*}
Therefore, by conditions (a)--(c), $P_0\left(\ell_n(t) \leq \theta_0(t) \leq u_n(t)\mbox{ for all }t\in\s{T}\right) \to 1-\alpha$. Next, we let $\delta > 0$ and note that $\sup_{\|t -s\| \leq \delta / r_n} \left| r_n\{ \ell_n(t) - \theta_0(t)\} - r_n\{ \ell_n(s) - \theta_0(s) \}\right|$ is bounded above by
\begin{align*}&\sup_{\|t -s\| \leq \delta / r_n} \left| r_n\{ \theta_n(t) - \theta_0(t)\} - r_n\{ \theta_n(s) - \theta_0(s) \} \right|+ 2\| r_n(\theta_n - \ell_n) - \gamma_\alpha\|_{\s{T}}\\
&\qquad+\sup_{\|t -s\| \leq \delta / r_n}| \gamma_\alpha(t) - \gamma_\alpha(s)|.
\end{align*}
The first term tends to zero in probability by (A), the second by conditions (a)--(c), and the third by the assumed uniform continuity of $\gamma_\alpha$. An analogous decomposition holds for $u_n$. Therefore, we can apply Theorem~\ref{monotone_supnorm} with $u_n$ and $\ell_n$ in place of $\theta_n$ to find that $\|\ell_n^* - \ell_n\|_{\s{T}} = \fasterthan(r_n^{-1})$ and $\|u_n^* - u_n\|_{\s{T}} = \fasterthan(r_n^{-1})$. Finally, applying an analogous argument to the event $\ell_n^* \leq \theta_0 \leq u_n^*$ as we applied to $\ell_n \leq \theta_0 \leq u_n$ above yields the result. \qed

\section{Proof of Theorem~\ref{thm:emp_process}}\label{app:thm3}

Let $\epsilon, \delta, \eta> 0$. By  \eqref{asy_linear} and since $\sup_{t \in \s{T}} |R_{n,t}| = \fasterthan(n^{-1/2})$, 
\[ n^{1/2}\left| \{\theta_n(t) - \theta_0(t)\} - \{\theta_n(s) - \theta_0(s)\}\right|\ \leq\ \left|\d{G}_n (\phi_{0,t} - \phi_{0,s}) \right| + \fasterthan(1)\ .\]
Condition (A2) implies that $\{\phi_{0,t} : t \in \s{T}\}$ is uniformly mean-square continuous, in the sense that 
\[\lim_{h \to 0} \sup_{\|t - s\| \leq h} \int \left\{\phi_{0,s}(x)-\phi_{0,t}(x)\right\}^2dP_0(x) = 0\ . \]
Since $\s{T}$ is totally bounded in $\| \cdot\|$, this also implies that $\{\phi_{0,t} : t \in \s{T}\}$ is totally bounded in the $L_2(P_0)$ metric. This, in addition to (A1), implies that $\{\d{G}_n \phi_{0,t} : t \in \s{T}\}$ converges weakly in $\ell^{\infty}(\s{T})$ to a Gaussian process $\d{G}$ with covariance function $\Sigma_0$. Furthermore, (A2) implies that this limit process is a tight element of $\ell^{\infty}(\s{T})$. By Theorem 1.5.4 of \cite{van1996weak}, $\{\d{G}_n \phi_{0,t} : t \in \s{T}\}$ is asymptotically tight. By Theorem 1.5.7 of \cite{van1996weak}, $\{\d{G}_n \phi_{0,t} : t \in \s{T}\}$ is thus asymptotically uniformly mean-square equicontinuous in probability, in the sense that there exists  some $\delta_0=\delta_0(\epsilon,\eta)>0$ such that \[\limsup_{n \to\infty} P_0\left\{ \sup_{\rho(s, t) <\delta_0} | \d{G}_n(\phi_{0,t} - \phi_{0,s}) | > \epsilon\right\}<\eta\]
with $\rho(s,t) := [\int \{\phi_{0,t}(x)-\phi_{0,s}(x)\}^2dP_0(x)]^{1/2}$. By (A2), $\sup_{\|t - s\| \leq h} \rho(t,s) < \delta_0$ for some $h>0$. Hence, for all $n$ large, both $\delta n^{-1/2} \leq h$ and  $ P_0\{ \sup_{\rho(s, t) <\delta_0} | \d{G}_n(\phi_{0,t} - \phi_{0,s}) | > \epsilon\} < \eta$, so that
\begin{align*}
P_0\left\{ \sup_{\|t - s\| \leq \delta n^{-1/2}} \left| \d{G}_n(\phi_{0,t} - \phi_{0,s})\right| > \epsilon\right\} \leq P_0\left\{ \sup_{\rho(t, s) < \delta_0} \left| \d{G}_n(\phi_{0,t} - \phi_{0,s})\right| > \epsilon\right\} < \eta\ ,
\end{align*}
which completes the proof. \qed

%\section{Proof of Proposition~\ref{thm:smooth}}
%
%We write
%\begin{align*}
%&\sup_{\| t - s\| < \delta / r_n} \left| r_n \left[ \theta_n(t) - \theta_0(t) \right] - r_n \left[ \theta_n(s) - \theta_0(s) \right] \right| \leq 2\sup_{\|t-s\|<\delta / r_n} \left| r_n \left[ \theta_n(t) - \theta_0(t) \right] - b_0(t) - \d{G}_n\nu_{n,t} \right| \\
%&\qquad\qquad + \sup_{\|t-s\|<\delta / r_n} | b_0(t) - b_0(s)| + a_n\sup_{\|t-s\|<\delta / r_n}\left| \d{G}_n \left[ \nu_{n,t} - \nu_{n,s}\right]\right|\ .
%\end{align*}
%The first and second terms are tending to zero by assumption. The third term is equal to $a_n\sup_{\xi \in \s{F}_{n,\delta/r_n}} \left| \d{G}_n \xi \right|$. Since by (A4) $\s{F}_{n,\delta / r_n}$ is uniformly bounded by $\bar{F}$ for all $n$ large enough, we can apply  Lemma 3.4.2 of VW or  Theorem 2.1 of CITE and  (A5) to see that
%\[ \sup_{\xi \in \s{F}_{n,\delta/r_n}} \left| \d{G}_n \xi \right| \lesssim J( \gamma_{n}(\delta / r_n)) 1 +  n^{-1/2} \gamma_{n}(\delta / r_n)^{-1}J( \gamma_{n}(\delta / r_n))^2 \ .  \]
%This combined with the rate condition in (A5) yields the result.

\vspace{.2in}

\singlespacing
\bibliographystyle{apa}
\bibliography{../projection_bib}

\begin{thebibliography}{}

\bibitem[\protect\astroncite{Barlow et~al.}{1972}]{barlow1972order}
Barlow, R.~E., Bartholomew, D.~J., Bremner, J.~M., and Brunk, H.~D. (1972).
\newblock {\em Statistical Inference Under Order Restrictions: The Theory and
  Application of Isotonic Regression}.
\newblock Wiley New York.

\bibitem[\protect\astroncite{Bril et~al.}{1984}]{bril1984bivariate}
Bril, G., Dykstra, R., Pillers, C., and Robertson, T. (1984).
\newblock {Algorithm AS 206: Isotonic Regression in Two Independent Variables}.
\newblock {\em J. R. Stat. Soc. Ser. C. Appl. Stat}, 33(3):352--357.

\bibitem[\protect\astroncite{Chernozhukov
  et~al.}{2010}]{chernozhukov2010quantile}
Chernozhukov, V., Fern\'{a}ndez-Val, I., and Galichon, A. (2010).
\newblock Quantile and probability curves without crossing.
\newblock {\em Econometrica}, 78(3):1093--1125.

\bibitem[\protect\astroncite{Chernozhukov
  et~al.}{2009}]{chernozhukov2009rearrangement}
Chernozhukov, V., Fernández-Val, I., and Galichon, A. (2009).
\newblock Improving point and interval estimators of monotone functions by
  rearrangement.
\newblock {\em Biometrika}, 96(3):559--575.

\bibitem[\protect\astroncite{Daouia and Park}{2013}]{daouia2012isotonic}
Daouia, A. and Park, B.~U. (2013).
\newblock On projection-type estimators of multivariate isotonic functions.
\newblock {\em Scandinavian Journal of Statistics}, 40(2):363--386.

\bibitem[\protect\astroncite{Dette et~al.}{2006}]{dette2006}
Dette, H., Neumeyer, N., and Pilz, K.~F. (2006).
\newblock A simple nonparametric estimator of a strictly monotone regression
  function.
\newblock {\em Bernoulli}, 12(3):469--490.

\bibitem[\protect\astroncite{Fan and Gijbels}{1996}]{fan1996local}
Fan, J. and Gijbels, I. (1996).
\newblock {\em {Local Polynomial Modelling and Its Applications}}.
\newblock CRC Press, Boca Raton.

\bibitem[\protect\astroncite{{Fokianos} et~al.}{2017}]{fokianos2017integrated}
{Fokianos}, K., {Leucht}, A., and {Neumann}, M.~H. (2017).
\newblock {On Integrated $L^{1}$ Convergence Rate of an Isotonic Regression
  Estimator for Multivariate Observations}.
\newblock {\em arXiv e-prints}, page arXiv:1710.04813.

\bibitem[\protect\astroncite{Gill and Robins}{2001}]{gill2001}
Gill, R.~D. and Robins, J.~M. (2001).
\newblock Causal inference for complex longitudinal data: The continuous case.
\newblock {\em Ann. Statist.}, 29(6):1785--1811.

\bibitem[\protect\astroncite{Hall and Kang}{2001}]{hall2001bootstrap}
Hall, P. and Kang, K.-H. (2001).
\newblock Bootstrapping nonparametric density estimators with empirically
  chosen bandwidths.
\newblock {\em Ann. Statist.}, 29(5):1443--1468.

\bibitem[\protect\astroncite{Hardle et~al.}{1988}]{hardle1988}
Hardle, W., Janssen, P., and Serfling, R. (1988).
\newblock Strong uniform consistency rates for estimators of conditional
  functionals.
\newblock {\em Ann. Statist.}, 16(4):1428--1449.

\bibitem[\protect\astroncite{Liao and Meyer}{2014}]{coneproj}
Liao, X. and Meyer, M.~C. (2014).
\newblock {coneproj: An R Package for the Primal or Dual Cone Projections with
  Routines for Constrained Regression}.
\newblock {\em Journal of Statistical Software}, 61(12):1--22.

\bibitem[\protect\astroncite{Meyer}{1999}]{meyer1999cone}
Meyer, M.~C. (1999).
\newblock An extension of the mixed primal--dual bases algorithm to the case of
  more constraints than dimensions.
\newblock {\em Journal of Statistical Planning and Inference}, 81(1):13 -- 31.

\bibitem[\protect\astroncite{Mukarjee and Stern}{1994}]{mukarjee1994}
Mukarjee, H. and Stern, S. (1994).
\newblock Feasible nonparametric estimation of multiargument monotone
  functions.
\newblock {\em Journal of the American Statistical Association},
  89(425):77--80.

\bibitem[\protect\astroncite{Patra and Sen}{2016}]{patra2016}
Patra, R.~K. and Sen, B. (2016).
\newblock Estimation of a two-component mixture model with applications to
  multiple testing.
\newblock {\em Journal of the Royal Statistical Society: Series B (Statistical
  Methodology)}, 78(4):869--893.

\bibitem[\protect\astroncite{{R Core Team}}{2018}]{Rlang}
{R Core Team} (2018).
\newblock {\em {R: A Language and Environment for Statistical Computing}}.
\newblock R Foundation for Statistical Computing, Vienna, Austria.

\bibitem[\protect\astroncite{Robertson et~al.}{1988}]{robertson1988order}
Robertson, T., Wright, F., and Dykstra, R. (1988).
\newblock {\em Order Restricted Statistical Inference}.
\newblock Wiley, New York.

\bibitem[\protect\astroncite{Robins}{1986}]{robins1986}
Robins, J. (1986).
\newblock A new approach to causal inference in mortality studies with a
  sustained exposure period -- application to control of the healthy worker
  survivor effect.
\newblock {\em Mathematical Modelling}, 7(9):1393 -- 1512.

\bibitem[\protect\astroncite{Ruppert et~al.}{1995}]{ruppert1995effective}
Ruppert, D., Sheather, S.~J., and Wand, M.~P. (1995).
\newblock An effective bandwidth selector for local least squares regression.
\newblock {\em Journal of the American Statistical Association},
  90(432):1257--1270.

\bibitem[\protect\astroncite{Stupfler}{2016}]{stupfler2016}
Stupfler, G. (2016).
\newblock On the weak convergence of the kernel density estimator in the
  uniform topology.
\newblock {\em Electron. Commun. Probab.}, 21:13 pp.

\bibitem[\protect\astroncite{Turner}{2015}]{Isopackage}
Turner, R. (2015).
\newblock {\em {Iso: Functions to Perform Isotonic Regression}}.
\newblock R package version 0.0-17.

\bibitem[\protect\astroncite{van~der Laan and Robins}{2003}]{van2003unified}
van~der Laan, M.~J. and Robins, J.~M. (2003).
\newblock {\em Unified methods for censored longitudinal data and causality}.
\newblock Springer Science \& Business Media.

\bibitem[\protect\astroncite{van~der Vaart and Wellner}{2011}]{vandervaart2011}
van~der Vaart, A. and Wellner, J.~A. (2011).
\newblock A local maximal inequality under uniform entropy.
\newblock {\em Electron. J. Statist.}, 5:192--203.

\bibitem[\protect\astroncite{van~der Vaart and Wellner}{1996}]{van1996weak}
van~der Vaart, A.~W. and Wellner, J.~A. (1996).
\newblock {\em Weak Convergence and Empirical Processes}.
\newblock Springer-Verlag New York.

\bibitem[\protect\astroncite{Wand}{2015}]{KernSmooth}
Wand, M. (2015).
\newblock {\em {KernSmooth: Functions for Kernel Smoothing Supporting Wand \&
  Jones (1995)}}.
\newblock R package version 2.23-15.

\bibitem[\protect\astroncite{Wand and Jones}{1995}]{wand1995kernsmooth}
Wand, M.~P. and Jones, M.~C. (1995).
\newblock {\em {Kernel Smoothing}}.
\newblock Chapman and Hall, London.

\end{thebibliography}

\clearpage

\section*{Supplementary Material}

Herein, we refer to \cite{van1996weak} as VW.  Throughout, the symbol $\lesssim$ should be interpreted to mean ``up to a constant not depending on $n$, $t$, $y$, or $a$".

We recall that the data unit is the vector $X = (A, Y)$, where $Y$ is an outcome and $A$ is now a continuous exposure. The observed data consist of independent draws $X_1, \dotsc, X_n$ from $P_0$. The parameter of interest $\theta_0$ is the conditional distribution function of $Y$ at $t_1$ given $A = t_2$, defined pointwise as $\theta_{0}(t_1, t_2) := P\left( Y \leq t_1 \mid A = t_2 \right)$. The local linear regression estimator $\theta_n$ is given by
\[ \theta_n(t_1, t_2) :=  \frac{1}{nh_n} \sum_{i=1}^n I(Y_j \leq t_1) \frac{s_{2,n}(t_2)  - s_{1,n}(t_2) \left(A_i - t_2\right)}{s_{0,n}(t_2)s_{2,n}(t_2) - s_{1,n}(t_2)^2} K\left( \frac{A_i - t_2}{h_n}\right) \ ,\]
where $K:\d{R}\to\d{R}$ is a symmetric and bounded kernel function, $h_n \to 0$ is a sequence of bandwidths, and $s_{j,n}(t_2) :=  \frac{1}{nh_n} \sum_{i=1}^n \left(A_i - t_2\right)^j K\left( \frac{A_i - t_2}{h_n}\right)$ for $j \in \{0,1,2\}$.  We also define
\begin{align*}
 R_{\theta}^{(2)}(t, \delta) &:= \theta_0(t_1, t_2 + \delta) - \theta_0(t_1, t_2) - \delta\theta_{0,t_2}'(t_1, t_2) - \tfrac{1}{2} \delta^2\theta_{0,t_2}''(t_1, t_2)\ , \\
 R_{f}^{(1)}(t, \delta) &:= f_0(t_2 +\delta ) - f_0(t_2) - \delta f_0'(t_2) \ .
 \end{align*}
 We recall the following conditions:
\begin{description}
\item[(d)] $\theta_{0,t_2}''$ exists and is continuous on $\s{T}$, and as $\delta \to 0$, $\sup_{t \in \s{T}} | R_{\theta}^{(2)}(t, \delta)| = \fasterthandet(\delta^2)$;
\item[(e)] $\inf_{t\in \s{T}} f_0(t_2) > 0$, $f_0'$ exists and is continuous on $\s{T}$, and $\sup_{t \in \s{T}}| R_{f}^{(1)}(t, \delta)|=  \fasterthandet(\delta)$;
\item[(f)] $K$ is a Lipschitz function supported on $[-1,1]$ and satisfies condition (M) of \cite{stupfler2016}.
\end{description} 
Letting 
\begin{align*}
\nu_{n,t}(y,a) &:=  \left[ I(y \leq t_1) - \theta_0(t_1, a)\right]  K\left( \frac{a - t_2}{h_n}\right) \text{ and } \\
R_n(t) &:= h_n^{-1/2} \left[ \frac{ s_{2,n}(t_2)}{g_n(t_2)} \d{G}_n \nu_{n,t} -  \frac{ s_{1,n}(t_2)}{g_n(t_2)} \d{G}_n \left( \ell_t \nu_{n,t}\right)\right]\ ,
\end{align*}
for $g_n(t_2) :=  s_{0,n}(t_2)s_{2,n}(t_2) - s_{1,n}(t_2)^2$,  the statement of Proposition~1 from the main text is:
\begin{proposition}\label{prop:kern_unif}
Suppose conditions (d)-(f) hold, $nh_n^5 =\boundeddet(1)$, and $nh_n^4/ \log h_n^{-1} \longrightarrow \infty$. Then
\[ \sup_{t \in \s{T}} \left| \left(nh_n \right)^{1/2} \left[ \theta_n(t_1, t_2) - \theta_0(t_1, t_2) \right] - \left(n h_n^5\right)^{1/2} \tfrac{1}{2}\theta_{0, t_2}''(t_1, t_2)  K_2 - R_n(t)  \right| \inprob 0 \ .\]
\end{proposition}

Recall further that condition (A) of the main text states:
\begin{description}
\item[(A)] there exists a deterministic sequence $r_n$ tending to infinity such that, for all $\delta>0$,
\[\sup_{\|t -s\| < \delta/r_n} \left|r_n\left[\theta_n(t) - \theta_0(t)\right] - r_n\left[\theta_n(s) - \theta_0(s)\right] \right| \inprob 0;\]
\end{description}
The statement of Proposition~2 from the main text is:
 \begin{proposition}\label{prop:cond_dist}
Suppose conditions (d)-(f) hold and $nh_n^5 \longrightarrow c \in (0, \infty)$. Then condition (A) holds for the local linear estimator with $r_n = (nh_n)^{1/2}$. 
\end{proposition}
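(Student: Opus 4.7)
The plan is to build directly on Proposition~\ref{prop:kern_unif}, which provides the uniform decomposition
\begin{align*}
(nh_n)^{1/2}[\theta_n(t) - \theta_0(t)] = (nh_n^5)^{1/2}\,\tfrac{1}{2} K_2\, \theta_{0,t_2}''(t_1,t_2) + R_n(t) + o_P(1),
\end{align*}
with the $o_P(1)$ uniform in $t \in \s{T}$. Subtracting this identity at $t$ and at $s$ reduces condition~(A) to showing that the bias-increment and the increment $R_n(t)-R_n(s)$ each tend to zero in probability uniformly over $\|t-s\|<\delta/r_n = \delta(nh_n)^{-1/2}$. The bias-increment is straightforward: under $nh_n^5\to c<\infty$ the coefficient $(nh_n^5)^{1/2}$ is bounded and, by condition~(d), $\theta_{0,t_2}''$ is continuous on the compact set $\s{T}$, hence uniformly continuous, so its increment over balls of radius $\delta/r_n\to 0$ vanishes uniformly.

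For the stochastic term I would write $R_n(t)=A_n(t_2)\d{G}_n\nu_{n,t}-B_n(t_2)\d{G}_n(\ell_t\nu_{n,t})$, with $A_n(t_2):=h_n^{-1/2}s_{2,n}(t_2)/g_n(t_2)$, $B_n(t_2):=h_n^{-1/2}s_{1,n}(t_2)/g_n(t_2)$, and $\ell_t(y,a):=a-t_2$, then split via the identity $A_n(t_2)X(t)-A_n(s_2)X(s)=[A_n(t_2)-A_n(s_2)]X(t)+A_n(s_2)[X(t)-X(s)]$, and analogously for the $B_n$ piece. Standard kernel computations under condition~(e) show that $A_n,B_n$ converge uniformly in probability to smooth limits (of orders $h_n^{-1/2}$ and $h_n^{1/2}$, respectively) with Lipschitz-type fluctuations in $t_2$; combined with the crude bounds $\d{G}_n\nu_{n,t}=O_P(h_n^{1/2})$ and $\d{G}_n(\ell_t\nu_{n,t})=O_P(h_n^{3/2})$ obtained from direct $L_2(P_0)$ computations, the resulting ``factor-difference'' pieces are of order $\delta/r_n \to 0$.

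The core of the argument is the ``function-difference'' piece $h_n^{-1/2}\sup_{\|t-s\|<\delta/r_n}|\d{G}_n(\nu_{n,t}-\nu_{n,s})|$ (and its analogue with $\ell_t\nu_{n,t}$). I would invoke the sharper local empirical-process machinery from Section~\ref{sec:kern}, in particular Lemma~3.4.2 of \cite{van1996weak} combined with Theorem~2.1 of \cite{vandervaart2011}. Decomposing $\nu_{n,t}-\nu_{n,s}$ into the three natural pieces corresponding to a change in $I(y\le t_1)$, in $\theta_0(t_1,a)$, and in $K((a-t_2)/h_n)$, a direct calculation gives $L_2(P_0)$ sizes of order $(|t_1-s_1|h_n)^{1/2}$, $|t_1-s_1|h_n^{1/2}$, and $|t_2-s_2|/h_n^{1/2}$, respectively. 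With $\|t-s\|<\delta(nh_n)^{-1/2}$, each of these tends to zero, so $\gamma_{n,\delta}:=\sup_{\xi\in\s{F}_{n,\delta/r_n}}\|\xi\|_{P_0,2}\to 0$. Since $\s{F}_{n,\delta/r_n}$ is generated by the VC class of indicators $\{y\mapsto I(y\le t_1)\}$ together with a Lipschitz-parametrized kernel family, it has polynomial uniform entropy and a bounded envelope $F\lesssim 1$; the cited bounds then give $\|\d{G}_n\|_{\s{F}_{n,\delta/r_n}}=O_P(\gamma_{n,\delta})$ up to logarithmic factors, and multiplying by $h_n^{-1/2}$ yields $o_P(1)$ since $nh_n\to\infty$ under $nh_n^5\to c$.

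The main obstacle is precisely this last step: the envelope of $\s{F}_{n,\delta/r_n}$ does \emph{not} shrink with $n$, so the crude uniform-entropy bound from Theorem~2.14.1 of \cite{van1996weak} is too weak. The refined bound, whose leading term is driven by the shrinking $L_2$-radius $\gamma_{n,\delta}$ rather than by the envelope's $L_2$ size, is what makes the argument go through; verifying its hypotheses requires careful bookkeeping of the three $L_2$ contributions above. In particular, one must check that the kernel-difference contribution $|t_2-s_2|/h_n^{1/2}$, despite the $h_n^{-1/2}$ blow-up, remains negligible because $|t_2-s_2|<\delta(nh_n)^{-1/2}$ yields an $L_2$ bound of order $(nh_n^2)^{-1/2}\to 0$ under $nh_n^5\to c$.
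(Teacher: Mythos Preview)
Your proposal is correct and follows essentially the same route as the paper's proof: invoke Proposition~\ref{prop:kern_unif}, dispatch the bias increment via uniform continuity of $\theta_{0,t_2}''$ under $nh_n^5=O(1)$, split $R_n(t)-R_n(s)$ by the product rule, control the factor-difference terms using the uniform convergence of $s_{j,n}/g_n$ (the paper packages this as a separate lemma), and handle the function-difference terms with the sharp local empirical-process bounds of Lemma~3.4.2 in \cite{van1996weak} and Theorem~2.1 in \cite{vandervaart2011}; the paper splits $\nu_{n,t}$ as $I(y\le t_1)K-\theta_0(t_1,a)K$ rather than your three-piece telescoping, but the analysis is equivalent. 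Two minor quantitative slips that do not affect the conclusion: $B_n=h_n^{-1/2}s_{1,n}/g_n$ is of order $h_n^{-1/2}$, not $h_n^{1/2}$, since $s_{1,n}/g_n\to f_0'/f_0^2$; and the uniform bound $\sup_t|\d{G}_n\nu_{n,t}|$ picks up an extra $(\log h_n^{-1})^{1/2}$, so the factor-difference pieces come out as $o_P\bigl((\log h_n^{-1})^{1/2}/(nh_n^4)\bigr)$ rather than $O(\delta/r_n)$, but this is still $o_P(1)$ under $nh_n^5\to c\in(0,\infty)$.
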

We note that condition (M) of Stupfler (2016) guarantees that the class $\left\{ x \mapsto K\left( \frac{x - t}{h}\right) : h > 0, t \in \d{R}\right\}$ is VC with index 2.

We define $K_j := \int u^j K(u) \, du$ and
\begin{align*}
w_n(a, t_2) &:= s_{2,n}(t_2)  - s_{1,n}(t_2) (a - t_2) \\
w_0(a, t_2) &:= f_0(t_2) - f_0'(t_2) (a - t_2) \ .
\end{align*}
Before proving Propositions~1 and~2, we state and prove a Lemma that we will use.

\begin{lemma}\label{lemma:s12}
Suppose conditions (d)-(f) hold, $nh_n^4 \longrightarrow \infty$, and $nh_n^5 = \boundeddet(1)$. Then
\begin{align*}
\left(n h_n^5\right)^{1/2}\sup_{(t_1, t_2) \in \s{T}}  \left| \frac{s_{n,1}(t_2)}{g_n(t_2)} - \frac{f_0'(t_2)}{f_0(t_2)^2} \right| &\inprob 0 \ ,  \\
\left(n h_n^5\right)^{1/2}\sup_{(t_1, t_2) \in \s{T}}  \left| \frac{s_{n,2}(t_2)}{g_n(t_2)} - \frac{1}{f_0(t_2)} \right| &\inprob 0\ ,  \\
\left(n h_n^5\right)^{1/2}\sup_{(t_1, t_2) \in \s{T}} \sup_{|a - t_2| \leq h_n} \left| \frac{w_n(a, t_2)}{g_n(t_2)} - \frac{w_0(a, t_2)}{f_0(t_2)^2} \right| &\inprob 0\ ,
\end{align*}
and for any $\delta > 0$, 
\begin{align*}
\left( n h_n^5\right)^{1/2}\sup_{\|t - s\| \leq \delta / (n h_n)^{1/2}} \left|\frac{s_{1,n}(t_2)}{g_n(t_2)} - \frac{s_{1,n}(s_2)}{g_n(s_2)}\right| &\inprob 0\ \text{ and} \\
 \left( n h_n^4\right)\sup_{\|t - s\| \leq \delta / (n h_n)^{1/2}} \left|\frac{s_{2,n}(t_2)}{g_n(t_2)} - \frac{s_{2,n}(s_2)}{g_n(s_2)}\right| &\inprob 0\ .
\end{align*}
\end{lemma}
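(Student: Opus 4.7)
All five claims concern uniform asymptotic behavior of smooth functionals of the triple $(s_{0,n}, s_{1,n}, s_{2,n})$, so my strategy is to first obtain uniform-in-$t_2$ asymptotic expansions for $s_{0,n}$, $s_{1,n}$, $s_{2,n}$, and $g_n$, and then derive each claim by algebraic manipulation. I would decompose $s_{j,n}(t_2) = E[s_{j,n}(t_2)] + (s_{j,n}(t_2) - E[s_{j,n}(t_2)])$. For the deterministic part, the change of variables $u = (a-t_2)/h_n$ combined with a Taylor expansion of $f_0$ using condition (e) and the symmetry identities $K_1 = K_3 = 0$ gives, uniformly in $t_2$, $E[s_{0,n}(t_2)] = f_0(t_2) + \fasterthandet(h_n)$, $E[s_{1,n}(t_2)] = h_n^2 K_2 f_0'(t_2) + \fasterthandet(h_n^2)$, and $E[s_{2,n}(t_2)] = h_n^2 K_2 f_0(t_2) + \fasterthandet(h_n^3)$. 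For the stochastic part, writing $s_{j,n}(t_2) - E[s_{j,n}(t_2)] = (h_n n^{1/2})^{-1} \d{G}_n \phi_{n,j,t_2}$ with $\phi_{n,j,t_2}(a) = (a-t_2)^j K((a-t_2)/h_n)$, I would apply the local maximal inequality of Lemma~3.4.2 of VW to the VC class $\{\phi_{n,j,t_2} : t_2 \in \s{T}\}$ (VC by condition (M) of Stupfler via (f)), exploiting the envelope $O(h_n^j)$ on support of $P_0$-measure $O(h_n)$ and supremum variance $O(h_n^{2j+1})$. This yields $\sup_{t_2} |s_{j,n}(t_2) - E[s_{j,n}(t_2)]| = \bounded(h_n^{j-1/2} (\log h_n^{-1}/n)^{1/2})$, which is $\fasterthan(h_n^j)$ under $nh_n^4 /\log h_n^{-1} \to \infty$. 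Combining, I get uniformly $s_{0,n} = f_0 + \fasterthan_P(1)$, $s_{1,n} = h_n^2 K_2 f_0' + \fasterthan_P(h_n^2)$, $s_{2,n} = h_n^2 K_2 f_0 + \fasterthan_P(h_n^2)$, and since $s_{1,n}^2 = \bounded(h_n^4) = \fasterthan_P(h_n^2)$, also $g_n = h_n^2 K_2 f_0^2 + \fasterthan_P(h_n^2)$.

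The first three claims then follow by algebra. For the second, I would write $s_{2,n}/g_n - 1/f_0 = (s_{2,n} f_0 - g_n)/(g_n f_0)$ with $s_{2,n} f_0 - g_n = s_{2,n}(f_0 - s_{0,n}) + s_{1,n}^2$; the uniform expansions show the numerator is $\fasterthan_P(h_n^2/(nh_n^5)^{1/2})$, and dividing by $g_n f_0 \asymp h_n^2$ (using $\inf_{\s{T}} f_0 > 0$ from (e)) and multiplying by $(nh_n^5)^{1/2}$ gives $\fasterthan_P(1)$. The first claim follows analogously from $s_{1,n}/g_n - f_0'/f_0^2 = (s_{1,n} f_0^2 - g_n f_0')/(g_n f_0^2)$, whose numerator has cancelling leading terms by construction. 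The third claim is essentially the same computation uniformly over $|a - t_2| \leq h_n$, since $w_n(a,t_2) = s_{2,n}(t_2) - (a-t_2) s_{1,n}(t_2)$ and $w_0(a,t_2) = f_0(t_2) - (a-t_2) f_0'(t_2)$.

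For the first local-oscillation claim, because $(nh_n^5)^{1/2} = \boundeddet(1)$ by assumption, a triangle-inequality decomposition via the uniformly continuous function $f_0'/f_0^2$ suffices: the oscillation of $s_{1,n}/g_n$ is bounded by twice the global deviation $\fasterthan_P((nh_n^5)^{-1/2})$ from the first claim plus the modulus of continuity of $f_0'/f_0^2$ over the vanishing neighborhood $|t_2 - s_2| \leq \delta/(nh_n)^{1/2}$, which is $\fasterthandet(1)$. The main obstacle is the second local-oscillation claim, at the much faster rate $nh_n^4$: the analogous triangle inequality via $1/f_0$ fails because $(nh_n^4)/(nh_n^5)^{1/2} = (nh_n^3)^{1/2} \to \infty$, so the global rate from the second claim is not sharp enough. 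Instead I would analyze the oscillation directly using
\[\frac{s_{2,n}(t_2)}{g_n(t_2)} - \frac{s_{2,n}(s_2)}{g_n(s_2)} = \frac{s_{2,n}(t_2)[g_n(s_2) - g_n(t_2)] + g_n(t_2)[s_{2,n}(t_2) - s_{2,n}(s_2)]}{g_n(t_2) g_n(s_2)} \ ,\]
and bounding the oscillations of $s_{0,n}$, $s_{1,n}$, $s_{2,n}$, and hence $g_n$, over $|t_2 - s_2| \leq \delta/(nh_n)^{1/2}$. The deterministic-bias oscillations are $\boundeddet(h_n^j |t_2 - s_2|)$ by a pointwise Taylor argument in $t_2$ using continuity of $f_0'$ from (e), and the empirical-process oscillations are controlled by applying Lemma~3.4.2 of VW to the VC class $\{\phi_{n,j,t_2} - \phi_{n,j,s_2} : |t_2 - s_2| \leq \delta/(nh_n)^{1/2}\}$, whose envelope shrinks proportionally to $h_n^{j-1} |t_2 - s_2|$ on support of $P_0$-measure $O(h_n)$, giving a variance that shrinks at rate $h_n^{2j+1} \cdot (|t_2-s_2|/h_n)^2$. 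After the leading $h_n^2$ factors in $s_{2,n}$ and $g_n$ cancel, the net oscillation is $\bounded(\delta/(nh_n)^{1/2})$, and multiplying by $nh_n^4$ yields $\bounded(\delta (nh_n^7)^{1/2}) = \fasterthan_P(1)$ because $nh_n^7 = h_n^2 \cdot nh_n^5 \to 0$ under the assumption.
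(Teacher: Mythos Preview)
Your overall plan—decompose each $s_{j,n}$ into a bias part handled by Taylor expansion of $f_0$ and a stochastic part handled by maximal inequalities for the kernel class, then recombine algebraically—is exactly the route the paper takes, and your treatment of the first three claims and of the $s_{1,n}/g_n$ oscillation matches the paper closely (minor point: your stochastic bound already holds under $nh_n/\log h_n^{-1}\to\infty$, which follows from $nh_n^4\to\infty$; you do not need the stronger $nh_n^4/\log h_n^{-1}\to\infty$).

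The one genuine divergence is the $s_{2,n}/g_n$ oscillation. You are right that the naive triangle inequality through $1/f_0$ using only the second claim is too weak at rate $nh_n^4$. But the paper does not abandon the triangle-inequality route here; instead it first sharpens the uniform approximation of $s_{2,n}$ by an extra factor of $h_n$, proving $(nh_n^5)^{1/2}\sup_{t_2}|h_n^{-2}s_{2,n}(t_2)-f_0(t_2)K_2|=\fasterthan(h_n)$ (the extra $h_n$ comes from $K_3=0$ together with the envelope $\bar K h_n^2$ for the empirical-process term). Combining this with the already-established $\sup_{t_2}|s_{0,n}-f_0|=\fasterthan(h_n)$ gives oscillations of $s_{2,n}$ and $g_n$ of order $\fasterthan(h_n(nh_n)^{-1/2})$, and plugging into the algebraic decomposition you wrote down yields $\fasterthan((nh_n^4)^{-1})$ directly. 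So the paper needs no separate empirical-process analysis of difference classes $\{\phi_{n,j,t_2}-\phi_{n,j,s_2}\}$. Your direct-oscillation approach is also correct and in fact yields the sharper bound $\bounded(\delta(nh_n^7)^{1/2})$ after scaling, but at the cost of additional work (envelope and variance calculations for the difference classes). The paper's version is shorter because it reuses the uniform approximation already in hand; yours is more self-contained and would be preferable if the sharper approximation for $s_{2,n}$ were unavailable.
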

\begin{proof}[\bfseries{Proof of Lemma~\ref{lemma:s12}}]
We first show that $\sup_{t \in \s{T}} \left| s_{0,n}(t_2) -  f_0(t_2) \right| = \fasterthan(h_n)$. We have
\[ s_{0,n}(t_2)  - f_0(t_2) = h_n^{-1}\int K\left( \frac{a - t_2}{h_n}\right) f_0(a) \, da - f_0(t_2)+  n^{-1/2}h_n^{-1} \d{G}_n K\left( \frac{\cdot  - t_2}{h_n}\right) \ . \]
By the change of variables $u = (a - t_2) / h_n$, we have
\begin{align*}
h_n^{-1}\int K\left( \frac{a - t_2}{h_n}\right) f_0(a) \, da - f_0(t_2) &=  \int K\left(u\right) \left[ f_0(t_2 + h_n u) - f_0(t_2)\right] \, du \\
&=   h_n \int uK(u) (h_n u)^{-1} R_f^{(1)}\left((t_1,t_2), h_n u\right) \, du \ ,
\end{align*}
which tends to zero uniformly over $t_2$ faster than $h_n$ by the assumed uniform negligibility of $R_{f}^{(1)}$. For the second term, since $K$ is uniformly bounded and the class $\left\{ a \mapsto K\left( \frac{a - t_2}{h_n}\right) : t_2 \in [0,1]\right\}$ is $P_0$-Donsker, as implied by condition (M) of \cite{stupfler2016}, Theorem 2.14.1 of VW implies that $\sup_{t_2} \left| \d{G}_n K\left( \frac{\cdot  - t_2}{h_n}\right)\right| = \bounded(1)$. Then, since $n^{-1/2} h_n^{-1} = h_n \left( n h_n^4 \right)^{-1/2} = \fasterthan (h_n)$, this term is also $\fasterthan (h_n)$.

We next show that $\left(n h_n^5\right)^{1/2}\sup_{t \in \s{T}} \left|  h_n^{-2}s_{1,n}(t_2) -  f_0'(t_2) K_2 \right| = \fasterthan(1)$. We have
\begin{align*}
(n h_n)^{1/2} s_{1,n}(t_2) &= \left(n h_n^{-1}\right)^{1/2} \int (a - t_2) K\left( \frac{a - t_2}{h_n}\right) f_0(a) \, da \\
&\qquad\qquad+ h_n^{-1/2} \iint  (a - t_2) K\left( \frac{a - t_2}{h_n}\right) \, \d{G}_n(dy, da) \ .
\end{align*}
By the change of variables $u = (a - t_2) / h_n$, the first term equals
\begin{align*}
(nh_n^3)^{1/2} \int u K\left(u\right) f_0(t_2 + h_n u) \, du &= (nh_n^3)^{1/2} \int u K\left(u\right) \left[  f_0(t_2 + h_n u)  - f_0(t) - (h_n u) f_0'(t_2)\right] \, du \\
&\qquad\qquad +\left(nh_n^5\right)^{1/2}  f_0'(t_2) K_2 \\
&= (nh_n^5)^{1/2} \int u K(u) h_n^{-1}R_f^{(1)}\left((t_1,t_2), h_n u\right) \, du + (nh_n^5)^{1/2} f_0'(t_2) K_2 \ .
\end{align*}
By the assumed uniform negligibility of $R_{f}^{(1)}$ and since $h_n = O(n^{-1/5})$, we have that the first term tends to zero in probability uniformly over $t \in \s{T}$.
%For the second term, we have by uniform continuity of $f_0'$ that 
%\[ (nh_n)^{1/2}\sup_{\| t- s\| < \delta /r_n} \left| h_n^2 f_0'(t_2) K_2 - h_n^2 f_0'(s_2) K_2| \right| \longrightarrow 0\]
%for any $\delta > 0$.

Turning to the second term in $s_{1,n}(t_2)$, we will apply Theorem 2.14.1 of VW to obtain a tail bound for the supremum of this empirical process over the one-dimensional class indexed by $t_2$. We note that, since $K$ is bounded by some $\bar{K}$ and supported on $[-1,1]$, 
\[ \left|(a - t_2) K\left( \frac{a - t_2}{h_n}\right) \right| \leq \bar{K} |a - t_2| I\left( |a - t_2| \leq h_n\right) \leq \bar{K} h_n \ . \]
Therefore, the class of functions 
\[\left\{ (y,a) \mapsto (a - t_2) K\left( \frac{a - t_2}{h_n}\right) : (t_1, t_2) \in \s{T} \right\} \]
has envelope $\bar{K} h_n$. Furthermore, since $(y,a) \mapsto (a - t_2)$ and $K$ are both uniformly bounded VC classes of functions, and $K$ is bounded, the class of functions possesses finite entropy integral. Hence, we have that
\[E_{0} \left[\sup_{(t_1, t_2) \in \s{T}}  \left|  h_n^{-1/2}\iint (a - t_2) K\left( \frac{a - t_2}{h_n}\right) \, \d{G}_n(dy, da) \right|\right]\ \leq\ C' h_n^{1/2}\longrightarrow 0\ .\] 
We now have that $(n h_n)^{1/2} \sup_{t \in \s{T}} \left| s_{1,n}(t_2) - h_n^2 f_0'(t_2) K_2 \right| = \fasterthan(1)$, which implies in particular that
\[ \sup_{t \in \s{T}} \left| s_{1,n}(t_2) \right| = (n h_n)^{-1/2} \fasterthan(1) + h_n^2 \bounded(1) = \bounded\left( \left[n h_n\right]^{-1/2}\right) \ . \]

Next, we show that $\left(n h_n^5\right)^{1/2}\sup_{t \in \s{T}} \left| h_n^{-2} s_{2,n}(t_2) - f_0(t_2) K_2 \right| = \fasterthan(h_n)$. The proof of this  is nearly identical to the preceding proof. We have
\begin{align*}
(n h_n)^{1/2} s_{2,n}(t_2) &= \left(n  h_n^{-1}\right)^{1/2} \int (a - t_2)^2 K\left( \frac{a - t_2}{h_n}\right) f_0(a) \, da \\
&\qquad\qquad+ h_n^{-1/2} \iint  (a - t_2)^2 K\left( \frac{a - t_2}{h_n}\right) \, \d{G}_n(dy, da) \ .
\end{align*}
By the change of variables $u = (a - t_2) / h_n$, the first term equals
\begin{align*}
(nh_n^5)^{1/2} \int u^2 K\left(u\right) f_0(t_2 + h_n u) \, du &= (nh_n^5)^{1/2} h_n \int u^3 K\left(u\right) \frac{R_f^{(1)}\left(t, h_n u\right)}{h_n u}\, du +\left(nh_n^5\right)^{1/2}  f_0(t_2) K_2 \ .
\end{align*}
The uniform negligibility of $R_{f}^{(1)}$ implies that the first term is $\fasterthan(h_n)$ uniformly in $t$.

Analysis of the second term in $s_{2,n}$ is analogous to that of $s_{1,n}$, except that the envelope function is now $\bar{K} h_n^{2}$, so that the empirical process term is  $\bounded\left(h_n^{3/2}\right)$. We also note that $\sup_{t_2} |s_{2,n}(t_2)| = \bounded\left( \left[nh_n\right]^{-1/2}\right)$. 

The above derivations imply that
\begin{align*}
 \left( nh_n^5\right)^{1/2}\sup_{t \in \s{T}} \left| h_n^{-2} g_n(t_2)- f_0(t_2)^2 K_2 \right| &\leq  \left( nh_n^5\right)^{1/2}\sup_{t \in \s{T}}\left| \left[ h_n^{-2} s_{2,n}(t_2) - f_0(t_2) K_2\right] s_{0,n}(t_2) \right|\\
 &\qquad + \left( nh_n^5\right)^{1/2}\sup_{t \in \s{T}}\left| \left[s_{0,n}(t_2) - f_0(t_2) \right] f_{0}(t_2) K_2\right| \\
 &\qquad + \left( nh_n\right)^{1/2} \left[ \sup_{t \in \s{T}} \left| s_{1,n}(t_2) \right|\right]^2 \\
 &= \fasterthan(1) \bounded(1) + \left( nh_n^5\right)^{1/2} \fasterthan(h_n) + \left( nh_n\right)^{1/2} \bounded\left( \left[n h_n\right]^{-1}\right) \\
 &= \fasterthan(1)\ .
\end{align*}
We now proceed to the statements in the Lemma. We write 
\begin{align*}
\left| \frac{s_{n,1}(t_2)}{g_n(t_2)} - \frac{f_0'(t_2)}{f_0(t_2)^2} \right| &= \left| \frac{h_n^{-2}s_{n,1}(t_2)}{h_n^{-2}g_n(t_2)} - \frac{f_0'(t_2)K_2}{f_0(t_2)^2 K_2} \right|  \\
&=\left| \frac{ h_n^{-2}s_{n,1}(t_2) - f_0'(t_2) K_2 }{h_n^{-2} g_n(t_2)} - f_0'(t_2) K_2 \frac{ h_n^{-2} g_n(t_2) -f_0(t_2)^2 K_2}{h_n^{-2} g_n(t_2) f_0(t_2)^2 K_2} \right| \\
&\leq  \frac{ \left|h_n^{-2}s_{n,1}(t_2) - f_0'(t_2) K_2\right|  }{h_n^{-2} g_n(t_2)} + f_0'(t_2) K_2 \frac{ \left| h_n^{-2} g_n(t_2) -f_0(t_2)^2 K_2 \right|}{h_n^{-2} g_n(t_2) f_0(t_2)^2 K_2} \ .
\end{align*}
Since $\inf_{t \in \s{T}} |f_0(t_2)| > 0$, $\sup_{t \in \s{T}} \left[h_n^{-2}g_n(t_2)\right]^{-1} = \bounded(1)$ and $\sup_{t\in\s{T}} \left[h_n^{-2} g_n(t_2) f_0(t_2)^2 \right]^{-1} = \bounded(1)$. The result follows.

We omit the proof of the statement regarding $s_{n,2}$, since it is almost identical to the above. For the statement regarding $w_n$, we have by the above calculations that
\[ \left( nh_n^5\right)^{1/2}\sup_{(t_1, t_2) \in \s{T}} \sup_{|a - t_2| \leq h_n} \left| h_n^{-2}w_n(a, t_2) -  w_0(a, t_2)  K_2 \right| \inprob 0 \ .\]
We write
\begin{align*}
\left| \frac{w_n(a, t_2)}{g_n(t_2)} - \frac{w_0(a, t_2)}{f_0(t_2)^2} \right| &= \left| \frac{h_n^{-2}w_n(a, t_2)}{h_n^{-2}g_n(t_2)} - \frac{w_0(a, t_2) K_2 }{f_0(t_2)^2 K_2} \right| \\
 & =\left| \frac{h_n^{-2}w_n(a, t_2)- w_0(a, t_2)K_2}{h_n^{-2}g_n(t_2)} - w_0(a, t_2) \frac{h_n^{-2}g_n(t_2) - f_0(t_2)^2 K_2}{h_n^{-2}g_n(t_2) f_0(t_2)^2} \right| \\
 &\leq \left[h_n^{-2}g_n(t_2)\right]^{-1} \left|h_n^{-2}w_n(a, t_2)- w_0(a, t_2)K_2\right| \\
 &\qquad+ \left|w_0(a, t_2)\right| \left[h_n^{-2} g_n(t_2) f_0(t_2)^2 \right]^{-1}  \left| h_n^{-2}g_n(t_2) - f_0(t_2)^2 K_2\right|  \ .
\end{align*}
The result follows.

We note that the above results imply that
\begin{align*}
\sup_{|t_2 - s_2| \leq \eta} |s_{1,n}(t_2)- s_{1,n}(s_2)| &\leq 2 \sup_{t_2} \left| s_{1,n}(t_2) - h_n^2 f_0'(t_2) K_2 \right| + h_n^2 \sup_{|t_2 - s_2| \leq \eta} | f_0'(t_2) - f_0'(s_2)| K_2\\
&\lesssim  \fasterthan\left( \left[n h_n\right]^{-1/2} \right) + h_n^2 \eta \ ,
\end{align*}
so that $\sup_{|t_2 - s_2| \leq \delta / (n h_n)^{-1/2}} |s_{1,n}(t_2)- s_{1,n}(s_2)| =    \fasterthan\left( \left[n h_n\right]^{-1/2} \right)$. Similarly, $\sup_{|t_2 - s_2| \leq \eta} |s_{2,n}(t_2)- s_{2,n}(s_2)|  =  \fasterthan\left( h_n \left[n h_n\right]^{-1/2} \right)$ and $\sup_{|t_2 - s_2| \leq \eta} |s_{0,n}(t_2)- s_{0,n}(s_2)|  =  \fasterthan\left( h_n  \right)$. Therefore,
\begin{align*}
\sup_{\|t - s\| \leq \delta / (n h_n)^{-1/2}} \left| g_n(t_2) - g_n(s_2)\right| &\leq \sup_{\|t - s\| \leq \delta / (n h_n)^{-1/2}} \left| \left[ s_{0,n}(t_2) - s_{0,n}(s_2) \right] s_{2,n}(s_2) \right|  \\
&\qquad + \sup_{\|t - s\| \leq \delta / (n h_n)^{-1/2}} \left| s_{0,n}(t_2) \left[ s_{2,n}(t_2) -s_{2,n}(s_2)\right] \right| \\
&\qquad + \sup_{\|t - s\| \leq \delta / (n h_n)^{-1/2}} \left| \left[ s_{1,n}(t_2)- s_{1,n}(s_2) \right]\left[ s_{1,n}(t_2)+ s_{1,n}(s_2) \right] \right| \\
&\lesssim  \fasterthan(h_n)  \bounded\left( \left[n h_n\right]^{-1/2}\right) + \bounded(1)\fasterthan\left( h_n \left[n h_n\right]^{-1/2} \right)\\
&\qquad +  \fasterthan\left( \left[n h_n\right]^{-1/2} \right)\bounded\left( \left[n h_n\right]^{-1/2}\right) \\
& = \fasterthan\left( h_n \left[n h_n\right]^{-1/2}\right) \ .
\end{align*}
We can now write
\begin{align*}
\sup_{\|t - s\| \leq \delta / (n h_n)^{-1/2}} \left|\frac{s_{1,n}(t_2)}{g_n(t_2)} - \frac{s_{1,n}(s_2)}{g_n(s_2)}\right| &\leq h_n^{-2} \sup_{\|t - s\| \leq \delta / (n h_n)^{-1/2}} \left| \frac{ s_{1,n}(t_2) - s_{1,n}(s_2)}{h_n^{-2}g_n(t_2)} \right| \\
&\qquad + h_n^{-4} \sup_{\|t - s\| \leq \delta / (n h_n)^{-1/2}} \left|s_{1,n}(s_2) \frac{ g_{n}(t_2) - g_{n}(s_2)}{h_n^{-2}g_n(t_2) h_n^{-2} g_n(s_2)} \right| \\
&= h_n^{-2} \fasterthan\left( \left[n h_n\right]^{-1/2} \right)  + h_n^{-4}  \bounded\left( \left[n h_n\right]^{-1/2}\right) \fasterthan\left(h_n \left[n h_n\right]^{-1/2}\right) \\
&= \fasterthan \left( \left[ n h_n^5\right]^{-1/2} \right) 
\end{align*}
and
\begin{align*}
\sup_{\|t - s\| \leq \delta / (n h_n)^{-1/2}} \left|\frac{s_{2,n}(t_2)}{g_n(t_2)} - \frac{s_{2,n}(s_2)}{g_n(s_2)}\right| &\leq h_n^{-2} \sup_{\|t - s\| \leq \delta / (n h_n)^{-1/2}} \left| \frac{ s_{2,n}(t_2) - s_{2,n}(s_2)}{h_n^{-2}g_n(t_2)} \right| \\
&\qquad + h_n^{-4} \sup_{\|t - s\| \leq \delta / (n h_n)^{-1/2}} \left|s_{2,n}(s_2) \frac{ g_{n}(t_2) - g_{n}(s_2)}{h_n^{-2}g_n(t_2) h_n^{-2} g_n(s_2)} \right| \\
&= h_n^{-2} \fasterthan\left( h_n \left[n h_n\right]^{-1/2}\right)  \\
&\qquad+ h_n^{-4}\bounded\left( \left[n h_n\right]^{-1/2}\right) \fasterthan\left(h_n \left[n h_n\right]^{-1/2}\right) \\
&= \fasterthan \left( \left[ n h_n^4\right]^{-1} \right) \ .
\end{align*}

\end{proof}

\begin{proof}[\bfseries{Proof of Proposition~1}]
We define
\begin{align*}
m_{1,n}(t_1, t_2) &:=  h_n^{-1}\iint \left[ \theta_0(t_1, a) - \theta_0(t_1, t_2) \right]\frac{w_n(a, t_2)}{g_n(t_2)} K\left( \frac{a - t_2}{h_n}\right) \, \d{P}_n(dy, da) \\
m_{2,n}(t_1, t_2) &:= h_n^{-1} \iint \left[ I(y \leq t_1) -  \theta_0(t_1, a) \right]\frac{w_n(a, t_2)}{g_n(t_2)} K\left( \frac{a - t_2}{h_n}\right) \, \d{P}_n(dy, da) \ .
\end{align*}
Then $\theta_n(t_1, t_2) - \theta_0(t_1, t_2) = m_{1,n}(t_1, t_2) + m_{2,n}(t_1, t_2)$. We note that since $E_0\left[ I(Y \leq t_1) \mid A = a\right] =   \theta_0(t_1, a)$, we have 
\begin{align*}
(nh_n)^{1/2} m_{2,n}(t_1, t_2) &= h_n^{-1/2}\iint \left[ I(y \leq t_1) -  \theta_0(t_1, a) \right]\frac{w_n(a, t_2)}{g_n(t_2)} K\left( \frac{a - t_2}{h_n}\right) \, \d{G}_n(dy, da) \\
&= h_n^{-1/2} \left[ \frac{s_{2,n}(t_2)}{g_n(t_2)} \d{G}_n v_{n,t} - \frac{s_{1,n}(t_2)}{g_n(t_2)} \d{G}_n \left( \ell_t v_{n,t} \right)\right]
\end{align*}
Therefore,
\begin{align*}
& \left(nh_n \right)^{1/2} \left[ \theta_n(t_1, t_2) - \theta_0(t_1, t_2) \right] - \left(n h_n^5\right)^{1/2} \tfrac{1}{2}\theta_{0, t_2}''(t_1, t_2)  K_2 - R_n(t_1, t_2) \\
&\qquad = \left(nh_n \right)^{1/2} m_{1,n}(t_1, t_2) - \left(n h_n^5\right)^{1/2} \tfrac{1}{2}\theta_{0, t_2}''(t_1, t_2)  K_2 \ .
\end{align*}
 We now proceed to analyze $m_{1,n}$. We have that
\begin{align*}
\left(nh_n\right)^{1/2} m_{1,n}(t_1, t_2) &= \left(n h_n^{-1}\right)^{1/2} \int \left[ \theta_0(t_1, a) - \theta_0(t_1, t_2) \right] \frac{w_n(a, t_2)}{g_n(t_2)} K\left( \frac{a - t_2}{h_n}\right)  f_0(a)\,  da \\ 
&\qquad +  h_n^{-1/2}\iint \left[ \theta_0(t_1, a) - \theta_0(t_1, t_2) \right] \frac{w_n(a, t_2)}{g_n(t_2)} K\left( \frac{a - t_2}{h_n}\right) \, \d{G}_n(dy, da) \ .
\end{align*}
The second term in $m_{1,n}$ may be further decomposed as 
\begin{align*}
 h_n^{-1/2}\frac{s_{2,n}(t_2)}{g_n(t_2)} \d{G}_n \gamma_{t,n} - h_n^{-1/2}\frac{s_{1,n}(t_2)}{g_n(t_2)} \d{G}_n \left( \ell_t \gamma_{t,n}\right)
\end{align*}
for $\gamma_{t,n}(y,a) := \left[ \theta_0(t_1, a) - \theta_0(t_1, t_2) \right] K\left( \frac{a - t_2}{h_n}\right)$ and $\ell_t(y,a) := a - t_2$. By Lemma~\ref{lemma:s12}, $\sup_{t\in \s{T}} \left|\frac{s_{2,n}(t_2)}{g_n(t_2)}\right| = \bounded\left( \left[n h_n^5\right]^{-1/2}\right)$, and similarly for $s_{1,n}$. We will use Theorem 2.14.2 of VW to obtain bounds for $\sup_{t \in \s{T}} \left| \d{G}_n \gamma_{t,n}\right|$ and $\sup_{t \in \s{T}} \left| \d{G}_n \left( \ell_t \gamma_{t,n}\right)\right|$. We first note that, since $K$ is bounded and supported on $[-1,1]$ and $\theta_0$ is Lipschitz on $\s{T}$, $\sup_{t \in \s{T}} | \gamma_{t,n}| \lesssim h_n$ and $\sup_{t \in \s{T}} | \ell_t \gamma_{t,n}| \lesssim h_n^2$. These will be our envelope functions for these classes. Next, since $K$ is Lipschitz, we have that 
\begin{align*}
\left| \gamma_{t,n} - \gamma_{s,n} \right| &\leq \left| \left[ \theta_0(t_1, a) - \theta_0(s_1, a)  \right ] - \left[\theta_0(t_1, t_2)  - \theta_0(s_1, s_2) \right] \right|   K\left( \frac{a - s_2}{h_n}\right) \\
&\qquad +\left| \theta_0(t_1, a) - \theta_0(t_1, t_2) \right| \left| K\left( \frac{a - t_2}{h_n}\right) - K\left( \frac{a - s_2}{h_n}\right) \right| \\
&\lesssim |t_1 - s_1| + \|t - s\| + |t_2 - s_2| h_n^{-1} \lesssim \|t-s\| h_n^{-1} \ .
\end{align*}
Therefore, by VW Theorem 2.7.11, we have $N_{[]}\left( 2\varepsilon h_n^{-1}, \s{G}_n, L_2(P_0)\right) \lesssim N(\varepsilon, \s{T}, \|\cdot\|) \lesssim \varepsilon^{-2}$, where $\s{G}_n := \{ \gamma_{n,t} : t\in \s{T}\}$. Thus, by VW Theorem 2.14.2, 
\begin{align*}
\sup_{t\in\s{T}} | \d{G}_n \gamma_{n,t} | &\lesssim \int_0^1 \left[ N_{[]}\left(\varepsilon h_n, \s{G}_n, L_2(P_0) \right)\right]^{1/2} \, d\varepsilon  \,h_n \lesssim  \int_0^1 \left[-\log (\varepsilon h_n^2)\right]^{1/2} \, d\varepsilon  \,h_n \\
&=  h_n^{-1} \int_0^{h_n^2} \left[ -\log \varepsilon \right]^{1/2} \, d\varepsilon  \lesssim h_n^{-1} \left\{ h_n^2 \left[ \log \left(h_n^{-2} \right) \right]^{1/2} \right\}  \lesssim  h_n \left(\log h_n^{-1} \right)\ ,
\end{align*}
where we have used the fact that $\int_0^z \left[ \log x^{-1}\right]^{1/2} \, dx \lesssim z \left[ \log z^{-1} \right]^{1/2}$ for all $t$ small enough. A similar argument applies to $\sup_{t\in\s{T}} | \d{G}_n (\ell_t \gamma_{n,t})|$. We thus have that the second term in $m_{1,n}$ is bounded above up to a constant not depending on $n$ and uniformly in $t$  by 
\[ h_n^{-1/2}  \bounded\left( \left[n h_n^5\right]^{-1/2}\right)  h_n \left(\log h_n^{-1}\right)^{1/2} =   \bounded\left( \left[\frac{n h_n^4}{ \log h_n^{-1}} \right]^{-1/2}\right) \ ,\] 
which is $\fasterthan(1)$ since $\frac{n h_n^4}{ \log h_n^{-1}} \to \infty$.
%
%for some $C < \infty$. Furthermore, the class of functions
%\[ \left\{ (y, a) \mapsto h_n^{-1/2}\left[ \theta_0(t_1, a) - \theta_0(t_1, t_2) \right] \frac{w_n(a, t_2)}{g_n(t_2)} K\left( \frac{a - t_2}{h_n}\right) : t \in \s{T} \right\} \]
%is a composition of the classes $\left\{ a \mapsto K\left( \frac{a - t}{h}\right) : h > 0, t \in \d{R}\right\}$, which is VC with index 2 by condition (M), $\{(y,a) \mapsto \theta_0(t_1, a) : (t_1, t_2) \in \s{T}\}$, which is VC-hull because $\theta_0$ is monotone, the class $\{ (y,a) \mapsto a - t_2 : (t_1, t_2) \in \s{T}\}$, which is VC with index 2 (Lemma 2.6.16 of VW), and sets of bounded (with probability tending to one) constants not depending on $(y,a)$, which therefore have VC index 2. Therefore, by  Lemma 2.6.20 and Theorem 2.6.9 of VW, the class of functions is VC-hull and possesses finite entropy integral. As a result, Theorem 2.14.1 of VW implies that 
%\[ E_0\left[ \sup_{(t_1, t_2) \in s{T}}  \left|\iint h_n^{-1/2}\left[ \theta_0(t_1, a) - \theta_0(t_1, t_2) \right] \frac{w_n(a, t_2)}{g_n(t_2)} K\left( \frac{a - t_2}{h_n}\right) \, \d{G}_n(dy, da) \right|\right] \leq C' h_n^{1/2}\]
%for some $C' < \infty$. This implies that this term tends to zero in probability uniformly over $\s{T}$. 

By the change of variables $u = (a -t_2) / h_n$, the first term in $m_{1,n}$ equals
\begin{align*}
&\left(nh_n\right)^{1/2}\int \left[ \theta_0(t_1, t_2+ h_n u) - \theta_0(t_1, t_2) \right] \frac{w_n(t_2 + h_n u, t_2)}{g_n(t_2)} K\left(u\right)  f_0(t_2 + h_n u)\,  du \\
&= (nh_n)^{1/2} \int \left[ R_{\theta}^{(2)}(t, h_n u)  +\theta_{0,t_2}'(t_1, t_2) (h_n u) +  \tfrac{1}{2} \theta_{0,t_2}''(t_1, t_2) (h_n u)^2 \right]  \\
&\qquad \qquad\qquad\qquad \cdot \left[R_{f}^{(1)}(t_2, h_n u) + f_0(t_2) + f_0'(t_2) (h_n u) \right] \frac{w_n(t_2 + h_n u, t_2)}{g_n(t_2)}K(u)\, du \ .
\end{align*}
Expanding the product, this is equal to 
\begin{align*}
&(nh_n)^{1/2} \int (h_nu) \left[\theta_{0,t_2}'(t_1, t_2) + \tfrac{1}{2} \theta_{0,t_2}''(t_1, t_2) (h_n u)\right]\left[ f_0(t_2) + f_0'(t_2) h_n u\right] \frac{w_n(t_2 + h_n u, t_2)}{g_n(t_2)}K(u)\, du \\
&\quad + (nh_n)^{1/2} \int  R_{\theta}^{(2)}(t, h_n u) \left[ f_0(t_2) + f_0'(t_2) h_n u\right] \frac{w_n(t_2 + h_n u, t_2)}{g_n(t_2)}K(u)\,du\\
&\quad + (nh_n)^{1/2} \int R_{f}^{(1)}(t_2, h_n u) (h_n u)\left[ \theta_{0,t_2}'(t_1, t_2) + \tfrac{1}{2} \theta_{0,t_2}''(t_1, t_2) h_n u\right]  \frac{w_n(t_2 + h_n u, t_2)}{g_n(t_2)}K(u)\,du\\
&\quad+ (nh_n)^{1/2} \int R_{\theta}^{(2)}(t, h_n u)R_{f}^{(1)}(t_2, h_n u)  \frac{w_n(t_2 + h_n u, t_2)}{g_n(t_2)}K(u)\,du\ .
\end{align*}
By the assumed negligibility of $R_{\theta}^{(2)}$ and $R_f^{(1)}$ and Lemma~\ref{lemma:s12}, the second through fourth terms tend to zero in probability uniformly over $\s{T}$. The first term equals
\begin{align*}
&\int f_0'(t_2) \left[\theta_{0,t_2}'(t_1, t_2) + \tfrac{1}{2} \theta_{0,t_2}''(t_1, t_2) (h_n u)\right] \left( n h_n^5\right)^{1/2}\left[ \frac{w_n(t_2 + h_n u, t_2)}{g_n(t_2)} -  \frac{w_0(t_2 + h_n u, t_2)}{f_0(t_2)^2}\right] u^2 K(u) \, du \\
&\quad + \left( n h_n^5\right)^{1/2} \int f_0'(t_2)  \left[\theta_{0,t_2}'(t_1, t_2) + \tfrac{1}{2} \theta_{0,t_2}''(t_1, t_2) (h_n u)\right] \frac{w_0(t_2 + h_n u, t_2)}{f_0(t_2)^2}  u^2 K(u) \, du \\
&\quad + \left( n h_n^3\right)^{1/2} \int f_0'(t_2)  \left[\theta_{0,t_2}'(t_1, t_2) + \tfrac{1}{2} \theta_{0,t_2}''(t_1, t_2) (h_n u)\right] \frac{w_n(t_2 + h_n u, t_2)}{g_n(t_2)}  u K(u) \, du \ .
\end{align*}
By Lemma~\ref{lemma:s12}, the first term tends to zero uniformly over $\s{T}$. By symmetry of $K$, the second plus third terms simplifies to
\begin{align*}
&\left( n h_n^5\right)^{1/2}\tfrac{1}{2}\theta_{0,t_2}''(t_1, t_2) K_2  + \left( n h_n^5\right)^{1/2}  \left[ \frac{s_{2,n}(t_2)}{g_n(t_2)} - \frac{1}{f_0(t_2)} \right]f_0(t_2) \tfrac{1}{2}\theta_{0,t_2}''(t_1, t_2) K_2 \\
&\qquad-\left( n h_n^5\right)^{1/2} \left[ \frac{s_{1,n}(t_2)}{g_n(t_2)} - \frac{f_0'(t_2)}{f_0(t_2)^2} \right] \theta_{0,t_2}'(t_1, t_2) f_0(t_2) K_2 \ .
\end{align*}
Once again, the second and third summands tend to zero uniformly over $\s{T}$ by Lemma~\ref{lemma:s12}. We have now shown that
\[ \sup_{t \in \s{T}} \left| \left(nh_n \right)^{1/2} m_{1,n}(t_1, t_2) - \left(n h_n^5\right)^{1/2} \tfrac{1}{2}\theta_{0, t_2}''(t_1, t_2)  K_2 \right| \inprob 0 \ ,\]
which completes the proof.
\end{proof}

We can now prove Proposition~2.
\begin{proof}[\bfseries{Proof of Proposition~2}]
Since $\theta_{0, t_2}''$ is uniformly continuous and $n h_n^5 = O(1)$, 
\[\sup_{\| t - s\| \leq \delta / r_n} \left| \left(n h_n^5\right)^{1/2} \tfrac{1}{2}\theta_{0, t_2}''(t_1, t_2)K_2 -  \left(n h_n^5\right)^{1/2} \tfrac{1}{2}\theta_{0, t_2}''(s_1, s_2) K_2 \right| \inprob 0 \ .\] 
Therefore, it only remains to show that $\sup_{\|t - s\| \leq \delta / r_n} \left| R_n(t) - R_n(s) \right| \inprob 0.$
Recalling that $\ell_t(y,a) := a - t_2$ and $\nu_{n,t}(y,a) :=  \left[ I(y \leq t_1) - \theta_0(t_1, a)\right] K\left( \frac{a - t_2}{h_n}\right)$, we have
\begin{align*}
 R_n(t) -R_n(s) &=\left[  \frac{s_{2,n}(t_2)}{g_n(t_2)} - \frac{s_{2,n}(s_2)}{g_n(s_2)} \right]\d{G}_n \nu_{n,t} -\left[  \frac{s_{1,n}(t_2)}{g_n(t_2)} - \frac{s_{1,n}(s_2)}{g_n(s_2)} \right]\d{G}_n\left( \ell_t \nu_{n,t} \right) \\
 &\qquad\qquad+ \frac{s_{2,n}(s_2)}{g_n(s_2)} \d{G}_n \left( \nu_{n,t} - \nu_{n,s}\right) - \frac{s_{1,n}(s_2)}{g_n(s_2)}  \d{G}_n \left(\ell_{t} \nu_{n,t} - \ell_s\nu_{n,s}\right) \ .
\end{align*}
Focusing first on $\d{G}_n \nu_{n,t}$, we have $\d{G}_n \nu_{n,t}= \d{G}_n \nu_{n,t,1}-  \d{G}_n \nu_{n,t,2}$ for $\nu_{n,t,1}(y,a) = I(y \leq t_1) K\left( \frac{a - t_2}{h_n}\right)$ and $\nu_{n,t,2}(y,a) = \theta_0(t_1, a) K\left( \frac{a - t_2}{h_n}\right)$. The classes $\left\{ I(y \leq t_1) : t \in \s{T} \right\}$ and $\left\{ K\left( \frac{a - t_2}{h_n}\right) : t \in \s{T} \right\}$ are both uniformly bounded above and VC. Therefore, the uniform covering numbers of the class $\left\{ I(y \leq t_1)K\left( \frac{a - t_2}{h_n}\right)  : t \in \s{T} \right\}$ are bounded up to a constant by $\varepsilon^{-V}$ for some $V < \infty$, so that the uniform entropy integral satisfies $J(\eta, \s{G}_{n,1}) \lesssim \eta \left( \log \eta^{-1} \right)^{1/2}$ for all $\eta$ small enough, where $\s{G}_{n, 1} := \{ \nu_{n,t,1} : t \in\s{T}\}$. We also have $P_0  \left(\nu_{n,t,1}\right)^2 \lesssim h_n$ for all $t \in \s{T}$ and all $n$ large enough. Thus, Theorem 2.1 of \cite{vandervaart2011} implies that 
\[ \sup_{t \in \s{T}} \left| \d{G}_n \nu_{n,t,1} \right| \lesssim h_n^{1/2}\left( \log h_n^{-1} \right)^{1/2} + n^{-1/2} \log h_n^{-1} \ .\]
For $\d{G}_n \nu_{n,t,2}$, we have that 
\[\left|\nu_{n,t,2}(y, a) - \nu_{n,s,2}(y,a) \right| \lesssim\|t-s\| (1 + h_n^{-1}) \lesssim h_n^{-1}\|t-s\| \]
for all $n$ large enough and all $(y,a)$. We can therefore apply Theorem 2.7.11 of VW to conclude that $N_{[]}\left(2\varepsilon h_n^{-1}, \s{G}_{n,2}, L_2(P_0) \right) \lesssim \varepsilon^{-2}$ for all $\varepsilon$ small enough, where $\s{G}_{n,2} = \left\{ \nu_{n,t,2} : t \in \s{T} \right\}$, which implies that  $N_{[]}\left(\varepsilon, \s{G}_{n,2}, L_2(P_0) \right) \lesssim (\varepsilon h_n)^{-2}$. Thus, $J_{[]}(\eta, \s{G}_{n,2}) \lesssim \eta \left[ \log (\eta h_n)^{-1} \right]^{1/2}$. Since $P_0  \left(\nu_{n,t,2}\right)^2 \lesssim  h_n$ as well, by Lemma 3.4.2 of VW, we then have
\[ E_{P_0} \sup_{t \in \s{T}} \left| \d{G}_n \nu_{n,t,2} \right| \lesssim h_n^{1/2}\left( \log h_n^{-1} \right)^{1/2} + n^{-1/2} \log h_n^{-1} \ .\]
Combining these two bounds with the last statement of Lemma~\ref{lemma:s12} yields
\begin{align*}
h_n^{-1/2}\sup_{t\in\s{T}}  \left| \left[  \frac{s_{2,n}(t_2)}{g_n(t_2)} - \frac{s_{2,n}(s_2)}{g_n(s_2)} \right]\d{G}_n \nu_{n,t} \right| &\lesssim h_n^{-1/2}\fasterthan \left( \left[ n h_n^4\right]^{-1} \right) \bounded\left( h_n^{1/2}\left[ \log h_n^{-1} \right]^{1/2} + n^{-1/2} \log h_n^{-1}\right) \\
&= \fasterthan(1) \left[ \frac{n h_n^4}{\left( \log h_n^{-1}\right)^{1/2}}\right]^{-1} + \fasterthan(1) \left( nh_n^{11/3}\right)^{-3/2} h_n \log h_n^{-1} \ .
\end{align*} 
Both terms tend to zero.

The analysis for $\d{G}_n\left( \ell_t \nu_{n,t}\right)$ is very similar. In this case, we have $P_0  \left(\ell_t \nu_{n,t}\right)^2 \lesssim h_n^3$, so that, using the same approach as above, we get
\[ E_{P_0} \sup_{t \in \s{T}} \left| \d{G}_n\left( \ell_t \nu_{n,t} \right) \right| \lesssim h_n^{3/2} \left( \log h_n^{-1} \right)^{1/2} + n^{-1/2} \left( \log h_n^{-1} \right)^{1/2}\]
and therefore, in view of Lemma~\ref{lemma:s12},
\begin{align*}
&h_n^{-1/2}\sup_{t\in\s{T}}  \left| \left[  \frac{s_{1,n}(t_2)}{g_n(t_2)} - \frac{s_{1,n}(s_2)}{g_n(s_2)} \right]\d{G}_n \left( \ell_t \nu_{n,t} \right) \right| \\
&\qquad\qquad\lesssim h_n^{-1/2} \fasterthan \left( \left[ n h_n^5\right]^{-1/2} \right) \bounded\left( h_n^{3/2} \left[ \log h_n^{-1} \right]^{1/2} + n^{-1/2} \left( \log h_n^{-1} \right)^{1/2}\right) \\
&= \fasterthan(1) \left( nh_n^4\right)^{-1/2} \left(h_n \log h_n^{-1} \right)^{1/2}+ \fasterthan(1) \left( nh_n^3\right)^{-1/2} \left(h_n \log h_n^{-1} \right)^{1/2} \ ,
\end{align*}
which goes to zero in probability.

It remains to bound $\sup_{\|t - s\| < \delta / r_n} \left|  \d{G}_n \left( \nu_{n,t} - \nu_{n,s}\right)\right|$ and $\sup_{\|t - s\| < \delta / r_n} \left|  \d{G}_n \left( \ell_t \nu_{n,t} - \ell_s \nu_{n,s}\right)\right|$. For the former, we work on the terms $\d{G}_n \left(\nu_{n,t,1} -  \nu_{n,s,1} \right)$ and $\d{G}_n \left(\nu_{n,t,2} -  \nu_{n,s,2}\right)$ separately. For the first of these,  we let $\s{F}_{n,\delta,2} := \left\{ \nu_{n,t,1}  - \nu_{n,s,1} : \| t- s\| \leq \delta / r_n\right\}$. We have
\begin{align*}
\left\|\nu_{n,t,1}  - \nu_{n,s,1} \right\|_{P_0, 2} & \leq \left( E_{P_0} \left\{ \left[  I(Y \leq t_1) - I(Y \leq s_1) \right]^2 K\left( \frac{A - s_2}{h_n}\right)^2 \right\} \right)^{1/2} \\
&\qquad +  \left( E_{P_0} \left\{ I(Y \leq t_1) \left[K\left( \frac{A - t_2}{h_n}\right) - K\left( \frac{A - s_2}{h_n}\right)\right]^2 \right\} \right)^{1/2} \\
&\lesssim \left( E_{P_0} \left\{  I(s_1 < Y \leq t_1)   I\left(|A - s_2| \leq h_n\right) \right\} \right)^{1/2} +h_n^{-1} |t_2 - s_2| \\
&\lesssim h_n^{1/2} \left| t_1 - s_1\right|^{1/2}  + h_n^{-1}\left|t_2 - s_2\right| \ .
\end{align*}
Therefore, $\sup_{f \in \s{F}_{n,\delta,1}} \left( P_0 f^2 \right)^{1/2}  \lesssim \left(nh_n^{-1}\right)^{-1/4} + \left(n h_n^3\right)^{-1/2} \lesssim \left(n h_n^3\right)^{-1/2}$ for all $n$ large enough. In addition, $\s{F}_{n,\delta,1}$ has uniform covering numbers bounded up to a constant by $\varepsilon^{-V}$ for all $n$ and $\delta$ because the classes $\left\{ I(y \leq t_1) : t \in \s{T} \right\}$ and $\left\{ K\left( \frac{a - t_2}{h_n}\right): t \in \s{T} \right\}$ are VC. Therefore, $J\left( \eta, \s{F}_{n,\delta,1} \right) \lesssim \eta \left( \log \eta^{-1} \right)^{1/2}$ for all $\eta$ small enough. Thus, Theorem 2.1 of \cite{vandervaart2011} implies that 
\[ E_{P_0} \sup_{\| t- s\| \leq \delta / r_n} \left| \d{G}_n \left( \nu_{n,t,1}  - \nu_{n,s,1} \right)\right| \lesssim \left(n h_n^3\right)^{-1/2} \left( \log \left[  n h_n^3\right] \right)^{1/2} + n^{-1/2}  \log \left[  n h_n^3\right] \ . \]
Turning to $\d{G}_n \left(\nu_{n,t,2} -  \nu_{n,s,2}\right)$, we analogously define $\s{F}_{n,\delta,2} := \left\{ \nu_{n,t,2} - \nu_{n,s,2} : \| t- s\| \leq \delta / r_n\right\}$. We have by the Lipschitz nature of $\theta_0$ and $K$ that
\[ \left| \theta_0(t_1, a) K\left( \frac{a - t_2}{h_n}\right) - \theta_0(s_1, a) K\left( \frac{a - s_2}{h_n}\right) \right|  \lesssim h_n^{-1} \| t- s\| \ . \]
Therefore, an envelope function $F_{n,\delta,2}$ for $\s{F}_{n,\delta,2}$ is given (up to a constant) by $h_n^{-1} \delta / r_n\lesssim \left(n h_n^3 \right)^{-1/2}$. Next, we have for any $(t, s)$ and $(t', s')$ in $\s{T}^2$ 
\begin{align*}
&\left| \left[ \theta_0(t_1, a) K\left( \frac{a - t_2}{h_n}\right) - \theta_0(s_1, a) K\left( \frac{a - s_2}{h_n}\right)  \right] \right. \\
&\left.\qquad - \left[ \theta_0(t_1', a) K\left( \frac{a - t_2'}{h_n}\right) - \theta_0(s_1', a) K\left( \frac{a - s_2'}{h_n}\right)  \right] \right| \\
&\qquad\qquad \leq \left| \theta_0(t_1, a)  - \theta_0(t_1', a) \right| K\left( \frac{a - t_2'}{h_n}\right) + \left| \theta_0(t_1, a)\right| \left| K\left( \frac{a - t_2}{h_n}\right) - K\left( \frac{a - t_2'}{h_n}\right)\right| \\
&\qquad\qquad\qquad  +  \left| \theta_0(s_1, a)  - \theta_0(s_1', a) \right| K\left( \frac{a - s_2'}{h_n}\right) + \left| \theta_0(s_1, a)\right| \left| K\left( \frac{a - s_2}{h_n}\right) - K\left( \frac{a - s_2'}{h_n}\right)\right|  \\
&\qquad\qquad\lesssim |t_1 - t_1'| + h_n^{-1} |t_2 - t_2'| + |s_1 - s_1'| + h_n^{-1} |s_2 - s_2'| \\
&\qquad\qquad\lesssim h_n^{-1} \left\|(t,s) - (t', s') \right\|_{\s{T}^2} \ ,
\end{align*}
where $ \left\|(t,s) - (t', s') \right\|_{\s{T}^2} := \max\{ \| t - s\|, \|t' - s'\| \}$. We therefore have by VW Theorem 2.7.11 that $N_{[]} \left( 2 \varepsilon h_n^{-1}, \s{F}_{n,\delta,2}, L_2(P_0) \right) \leq N\left( \varepsilon, \s{U}_{\delta / r_n}, \left\| \cdot \right\|_{\s{T}^2} \right)$, where $\s{U}_{\delta / r_n} := \left\{(t,s) \in \s{T}^2 : \| t- s\| \leq \delta / r_n \right\}$. Since $\s{U}_{\delta / r_n} \subseteq \s{T}^2$, we trivially have $N\left( \varepsilon, \s{U}_{\delta / r_n}, \left\| \cdot \right\|_{\s{T}^2} \right) \lesssim \varepsilon^{-4}$. Thus, 
\[ N_{[]} \left( \varepsilon \left[ nh_n^3 \right]^{-1/2}, \s{F}_{n,\delta,2}, L_2(P_0) \right) \lesssim \left( \varepsilon \left[ nh_n \right]^{-1/2}\right)^{-4} \ . \]
Therefore, VW Theorem 2.14.2 implies that 
\begin{align*}
 E_{P_0} \sup_{\| t - s\| \leq \delta / r_n} \left| \d{G}_n \left( \nu_{n,t,2}  - \nu_{n,s,2}\right) \right| &\lesssim \left(n h_n^3 \right)^{-1/2} \int_0^1 \left[ \log \left( \varepsilon [nh_n]^{-1/2}\right)^{-1} \right]^{1/2} \, d\varepsilon  \\
 &= \left(n h_n^3 \right)^{-1/2} \left(n h_n \right)^{1/2} \int_0^{(nh_n)^{-1/2}}  \left[ \log u^{-1} \right]^{1/2} \, du \\
 &\lesssim \left(n h_n^3 \right)^{-1/2} \left[ \log (n h_n) \right]^{1/2} \ .
 \end{align*}
 We now have that
 \begin{align*}
&h_n^{-1/2} \sup_{\| t -s\| \leq \delta / r_n}\left|  \frac{s_{2,n}(s_2)}{g_n(s_2)} \d{G}_n \left( \nu_{n,t} - \nu_{n,s}\right) \right| \\
&\qquad \lesssim h_n^{-1/2} \bounded\left(\left[nh_n^5\right]^{-1/2} \right) \bounded\left( \left(n h_n^3\right)^{-1/2} \left( \log \left[  n h_n^3\right] \right)^{1/2} + n^{-1/2}  \log \left[  n h_n^3\right] \right) \\
&= \bounded(1) \left[ \left( nh_n^{9/2} \right)^{-1} \left( \log \left[  n h_n^3\right] \right)^{1/2}+\left(n h_n^3\right)^{-1}  \log \left[  n h_n^3\right] \right] \ .
 \end{align*}
Both terms tend to zero in probability.

Finally, we address $\sup_{\|t - s\| < \delta / r_n} \left|  \d{G}_n \left( \ell_t \nu_{n,t} - \ell_s \nu_{n,s}\right)\right|$ in a very similar manner. As before, we work on the terms $\d{G}_n  \left(\ell_t \nu_{n,t,1}-  \ell_s \nu_{n,s,1} \right)$ and $\d{G}_n \left(\ell_t \nu_{n,t,2}-  \ell_s \nu_{n,s,2}\right)$ separately. It is straightforward to see that the same line of reasoning as used above applies to each of these terms as well, yielding the same negligibility.

\end{proof}

\end{document}